\documentclass[10pt, final]{amsart}
\usepackage{amsmath}
\usepackage[hidelinks]{hyperref}
\usepackage[all,cmtip]{xy}
\usepackage{mathrsfs}
\usepackage{float}
\usepackage{subfigure}
\usepackage{xspace}
\usepackage{enumitem}
\usepackage{mathtools}
\usepackage[backend=biber,
            style=alphabetic,
            sorting=nyt,
            uniquename=false,
            giveninits=true,
            doi=true,url=false]{biblatex}

\newtheorem{theorem}{Theorem}[section]
\newtheorem{corollary}[theorem]{Corollary}
\newtheorem{lemma}[theorem]{Lemma}
\newtheorem{proposition}[theorem]{Proposition}
\theoremstyle{definition}
\newtheorem{definition}[theorem]{Definition}
\newtheorem{remark}[theorem]{Remark}
\newtheorem{example}[theorem]{Example}

\newcommand{\goodgap}{\hspace{\subfigtopskip}\hspace{\subfigbottomskip}}
\newcommand{\lra}{\longrightarrow}
\newcommand{\ff}{\mathbb F}
\providecommand{\R}{\mathbb{R}}
\providecommand{\N}{\mathbb{N}}

\newcommand{\pr}{\operatorname{pr}}
\providecommand{\calM}{\mathcal M}
\providecommand{\calO}{\mathcal O}
\providecommand{\calW}{\mathcal W}
\newcommand{\frakg}{\mathfrak g}

\newcommand{\bF}{\mathbf F}
\providecommand{\abs}[1]{\ensuremath{\left\lvert{#1}\right\rvert}}
\providecommand{\ip}[2]{\ensuremath{\left\langle{#1},{#2}\right\rangle}}
\providecommand{\norm}[1]{\lVert#1\rVert}

\newcommand{\dert}{\frac{d}{dt} \bigg|_{t=0}}
\providecommand{\CE}{\ensuremath{\coloneqq}\xspace}
\providecommand{\jdef}[1]{\index{#1}\emph{#1}}
\providecommand{\stext}[1]{\ensuremath{\quad\text{#1}\quad}}
\providecommand{\pd}[2]{{\frac{\partial {#1}}{\partial {#2}}}}
\providecommand{\conj}{\overline}
\providecommand{\FlowC}[2]{\ensuremath{\bF^{{#1}}_{{#2}}}}  
\providecommand{\FlowCBV}[3]{\ensuremath{\bF^{{#1}}_{{#2},{#3}}}}  
\providecommand{\FlowD}[1]{\ensuremath{\bF^{{#1}}}} 
\providecommand{\ti}[1]{\ensuremath{\tilde{{#1}}}}

\renewbibmacro{in:}{} 
\numberwithin{equation}{section}

\begin{document}

\title[Forced discrete Hamiltonian systems]{Variational integrators
  using\\forced discrete Hamiltonian systems}

\author{Mat\'ias I. Caruso}
\address{\textnormal{(M. I. Caruso)} Depto. de Matem\'atica\\Instituto Balseiro \\ Universidad Nacional de Cuyo - C.N.E.A.\\ Av. Bustillo 9500 \\ San Carlos de Bariloche \\ R8402AGP \\ Argentina \hfill \break
	\indent Consejo Nacional de Investigaciones Cient{\'\i}ficas y T\'ecnicas (CONICET)}
\email{matias.caruso@ib.edu.ar}

\author{Javier Fern\'andez}
\address{\textnormal{(J. Fern\'andez)} Depto. de Matem\'atica\\Instituto Balseiro \\ Universidad Nacional de Cuyo - C.N.E.A.\\ Av. Bustillo 9500 \\ San Carlos de Bariloche \\ R8402AGP \\ Argentina}
\email{jfernand@ib.edu.ar}

\author{Cora Tori}
\address{\textnormal{(C. Tori)} Depto. de Ciencias B\'asicas \\ Facultad de Ingenier\'ia \\ Universidad Nacional de La Plata.
	Calle 116 entre 47 y 48 \\ La Plata \\ Buenos Aires \\ 1900 \\ Argentina \hfill \break
	\indent Centro de Matem\'atica de La Plata (CMaLP)}
\email{cora.tori@ing.unlp.edu.ar}

\author{Marcela Zuccalli}
\address{\textnormal{(M. Zuccalli)} Depto. de Matem\'atica \\ Facultad de Ciencias Exactas \\ Universidad Nacional de La Plata.
	Calles 50 y 115 \\ La Plata \\ Buenos Aires \\ 1900 \\ Argentina \hfill \break
	\indent Centro de Matem\'atica de La Plata (CMaLP)}
\email{marce@mate.unlp.edu.ar}

\subjclass{Primary: 37J06, 65P10; Secondary: 70G75.}
\keywords{Geometric mechanics, Forced discrete mechanical systems,
  Hamiltonian systems, Geometric numerical integrator.}

\begin{abstract}
  We study discrete Hamiltonian systems defined on cotangent bundles
  that are subjected to external forces, whose trajectories are
  determined by a discrete variational principle. We analyze the
  evolution of the canonical symplectic structure and, when a Lie
  group of symmetries is present, the corresponding evolution of the
  associated momenta. Given a continuous forced Hamiltonian system, we
  construct an exact discrete analogue whose order-$r$ approximations
  yield trajectories that approximate the continuous ones with
  accuracy of at least order $r$. We also give two methods to build
  approximate discrete systems. Combining these, we obtain a
  variational integrator: first approximate the exact discrete system
  and then solve the resulting algebraic equations of motion.
\end{abstract}

\maketitle

\tableofcontents


\section{Introduction}
\label{sec:introduction}

In Numerical Analysis, Geometric Numerical Integration refers to the
construction of algorithms that approximate the solution of ordinary
differential equations while preserving the geometric characteristics
of the given
problem~\cite{bo:hairer_lubich_wanner-geometric_numerical_integration}. When
the differential equations arise as equations of motion of mechanical
systems there is a well known way of constructing geometric
integrators using what are known as Discrete Mechanical Systems. The
solution of the equation of motion of a (continuous) mechanical
system, a trajectory, can be seen as a critical point of a variational
problem in a space of paths. Similarly, trajectories of a
\emph{discrete} mechanical system are critical points of a certain
functional ---defined in a space of discrete paths that, crucially, is
finite dimensional--- and are characterized by equations of motion
that are algebraic. Solving these equations gives rise to a numerical
integrator of the original differential problem
(see~\cite{ar:marsden_west:2001:discrete_mechanics_and_variational_integrators}),
known as a \jdef{variational integrator}, provided that
\begin{enumerate}
\item \label{it:goal-approximation} the discrete mechanical system is
``close enough'' to the continuous one, and
\item \label{it:goal-close_implies_flow_close}
  that~\eqref{it:goal-approximation} suffices to conclude that the
  trajectories of the systems are ``close enough''.
\end{enumerate}
Also of importance,
\begin{enumerate}[resume]
\item\label{it:goal-properties} how well do variational integrators
  preserve the geometric characteristics of the system?
\end{enumerate}

The description of trajectories of mechanical systems defined on a
configuration space $Q$ as critical points of a functional is
characteristic of the Lagrangian formulation of Mechanics
---variational formulation would be a better name---, where the
functional, called the action, is computed using a Lagrangian function
$L$ over the tangent bundle $TQ$. Alternatively, it is possible to
give a characterization of trajectories as critical points of a
functional on curves in the cotangent bundle $T^*Q$, computed using a
Hamiltonian function $H$ on $T^*Q$ (see~\cite{bo:AM-mechanics}
or~\cite{bo:goldstein:classical_mechanics}). In most cases, the two
descriptions are equivalent. Both approaches have discrete versions:
by far, the most common is the Lagrangian approach, where the discrete
action is defined using a discrete Lagrangian function
$L_d:Q\times Q\rightarrow \R$
(see~\cite{ar:wendtlandt_marsden-mechanical_integrators_derived_from_a_discrete_variational_principle},
~\cite{ar:marsden_west:2001:discrete_mechanics_and_variational_integrators}
and~\cite{ar:marrero_martin_martinez-discrete_lagrangian_and_hamiltonian_mechanics_on_lie_groupoids}). The
discrete Hamiltonian approach considers discrete paths in $T^*Q$ and
the corresponding action functional is constructed using a discrete
Hamiltonian function $H_d:T^*Q\rightarrow \R$
(see~\cite{ar:lall_west-discrete_variational_hamiltonian_mechanics}
and~\cite{ar:leok_zhang:2011:discrete_hamiltonian_variational_integrators}). These
variational integrators are known to satisfy
points~\eqref{it:goal-approximation}
and~\eqref{it:goal-close_implies_flow_close} and,
regarding~\eqref{it:goal-properties}, preserve some natural symplectic
structures whereas, if symmetry is present, the associated momenta are
conserved
(see~\cite{ar:marsden_west:2001:discrete_mechanics_and_variational_integrators},~\cite{ar:fernandez_graiffZurita_grillo:2021:error_analysis_of_forced_discrete_mechanical_systems}
and~\cite{ar:schmitt_leok:2017:properties_of_hamiltonian_variational_integrators}).

Unfortunately, in many real world applications, it is necessary to
consider mechanical systems that are subjected to external
forces. These systems rarely have the nice conservation properties
mentioned above. Still, the forced discrete Lagrangian systems have
been successfully used and studied for some time
(see~\cite{ar:marsden_west:2001:discrete_mechanics_and_variational_integrators},
\cite{ar:deDiego_deAlmagro-variational_order_for_forced_lagrangian_systems},
\cite{ar:fernandez_graiffZurita_grillo:2021:error_analysis_of_forced_discrete_mechanical_systems},
and~\cite{ar:caruso_fernandez_tori_zuccalli:2023:lagrangian_reduction_of_forced_discrete_mechanical_systems}).

On the other hand, the study of forced discrete Hamiltonian systems
lags behind. The purpose of this paper is to study such systems, with
focus on the case where the configuration manifold $Q$ is a finite
dimensional real vector space. We define trajectories of a forced
discrete Hamiltonian system (FDHS) as the solutions of a discrete
variational problem, similar to what is done in Section 3.2
of~\cite{ar:deLeon_lainz_lopezGordon-discrete_hamilton_jacobi_theory_for_systems_with_external_forces},
and, also, extending the idea used in Section 3
of~\cite{ar:leok_zhang:2011:discrete_hamiltonian_variational_integrators}
to the forced case. Actually, for a technical reason, we introduce a
slightly more general notion that we call \jdef{extended trajectory}
of the system.  Trajectories, extended or plain, are, also,
characterized as solutions of a system of algebraic equations. As in
the Lagrangian case, we introduce a notion of forced discrete Legendre
transformation ---in fact, two of them, a $+$ and a $-$ version--- and
call regular the FDHSs where they are local diffeomorphisms. We prove
that, if the space of extended trajectories of length $N$ is
non-empty, it is an open set in $Q^{N+1}$.

We also study some of the structural properties of FDHSs. As expected,
we see that the canonical symplectic form $\omega_Q$ on $T^*Q$ is not,
in general, preserved by the flow; an exception is the case when the
forces are closed. Similarly, if a Lie group is a symmetry group of
the system, we obtain a formula describing the evolution of the
corresponding canonical momentum map and give a condition that ensures
that it is conserved. These properties are the current, partial,
answer to the point~\eqref{it:goal-properties} above.

Even though the theory that we develop in this paper is purely
Hamiltonian, we see that given a ``good'' Lagrangian system (meaning
hyperregular and satisfying a certain ``modified hyperregularity
condition'') it is possible to construct an FDHS such that there is a
bijective correspondence between the trajectories of the former system
and the extended trajectories of the latter.

A central part of the paper is the error analysis of the variational
integrators constructed using FDHSs. We first prove that, given a
(continuous) forced Hamiltonian system $\calM$, there is an
$h$-dependent family of FDHSs $\calM_{d,h}^E$ ($h$ is a scalar
parameter defined in a neighborhood of $0$) such that the trajectories
of $\calM_{d,h}^E$ are the trajectories of $\calM$ evaluated at times
that are multiples of $h$, and conversely. We call $\calM_{d,h}^E$ the
\emph{exact} FDHS associated to $\calM$. Unfortunately, this system
$\calM_{d,h}^E$ has no direct practical use because it cannot be
computed effectively in most real cases. The true interest in
$\calM_{d,h}^E$ comes from using approximations $\calM_{d,h}$ of
$\calM_{d,h}^E$, usually thought of as discretizations of $\calM$. The
main result we prove in this area is that if
$\calM_{d,h} = \calM_{d,h}^E + \calO(h^{r+1})$ for some $r\in\N$
(notation to be explained), then the corresponding flows satisfy
$\FlowC{\calM}{h} = \FlowD{\calM_{d,h}^E} = \FlowD{\calM_{d,h}} +
\calO(h^{r+1})$. Thus, if we choose a discretization of $\calM$ that
is accurate to order $r$, the corresponding variational integrator
has, at least, the same order of accuracy (local error of the same
order). This is the answer to the
point~\eqref{it:goal-close_implies_flow_close}.

We also touch on the practical matter of constructing discretizations
for a given (continuous) forced Hamiltonian system $\calM$. We propose
a method based on expanding $\calM_{d,h}^E$ as a Taylor series around
$h=0$; we provide explicit formulas for the expansion up to orders $1$
and $2$. An alternative method using Gaussian quadrature and the
shooting method for integrating boundary value problems for ODEs is
discussed. Anyone of these methods provides a concrete way to
satisfy~\eqref{it:goal-approximation}.

The paper is structured as follows:
Section~\ref{sec:forced_hamiltonian_systems_variational_approach}
reviews some notions and results on (continuous) forced Hamiltonian
systems from the variational point of view. Forced discrete
Hamiltonian systems and their dynamics are introduced in
Section~\ref{sec:forced_discrete_Hamiltonian_systems}. Section~\ref{sec:structural_properties}
analyzes the evolution of the symplectic form and momentum by the flow
of an FDHS; it also discusses the construction of an FDHS from a given
forced Lagrangian system, such that the trajectories of the two
systems are in bijective
correspondence. Sections~\ref{sec:error_analysis-discrete_exact_systems}
and~\ref{sec:error_analysis-approximations} are dedicated to the error
analysis of the corresponding variational integrators: the former
focuses on the exact FDHS while the latter proves that a
discretization of order $r$ leads to an integrator of order, at least,
$r$. Last, Section~\ref{sec:construction_of_FDHS} discusses two
methods that can be used to construct FDHSs in practice.


\section{Forced Hamiltonian systems: variational approach}
\label{sec:forced_hamiltonian_systems_variational_approach}

Since we are looking for a variational version of forced discrete
Hamiltonian system, we will work, following
\cite{ar:leok_zhang:2011:discrete_hamiltonian_variational_integrators},
on a real vector space $Q$. Hence, its cotangent bundle can be
trivialized as $T^*Q \simeq Q \times Q^*$ and expressions such as
$p \dot{q}$ or $(q,p)$ for a curve in it are adequate.

We begin with a brief review of the variational formulation of
Hamiltonian mechanics.

\begin{definition}
  A \jdef{forced Hamiltonian system} is a triple $(Q,H,\phi)$, where
  $Q$ is a finite dimensional real vector space, $H : T^*Q \lra \R$ is
  a smooth function and $\phi \in \Omega^1(T^*Q)$ is a horizontal
  $1$-form.
	
  A curve $(q,p) : \R \lra T^*Q$ is a \jdef{(type I) trajectory} of
  $(Q,H,\phi)$ if it satisfies
  \begin{equation*}
    \delta \int_0^T (p \dot{q} - H(q,p)) \ dt + \int_0^T
    \check{\phi}(q,p)(\delta q) \ dt = 0,
  \end{equation*}
  for all infinitesimal variation $(\delta q,\delta p)$ over $(q,p)$
  such that $\delta q(0) = 0$ and $\delta q(T) = 0$, where
  $\check{\phi}$ is given by
  $\phi(q,p)(\delta q,\delta p) = \check{\phi}(q,p)(\delta q)$ (recall
  that $\phi$ is horizontal). This kind of infinitesimal variations
  will be called of type I.
\end{definition}

\begin{proposition}\label{prop:Hamilton_equations}
  Let $\calM \CE  (Q,H,\phi)$ be a forced Hamiltonian system. A curve
  $(q,p)$ on $T^*Q$ is a trajectory of $\calM$ if and only if it
  satisfies
  \begin{equation}\label{eq:Hamilton_equations}
    \begin{cases}
      \dot{q}(t) = D_2 H(q(t),p(t)) \\
      \dot{p}(t) = -D_1 H(q(t),p(t)) + \check{\phi}(q(t),p(t)).
    \end{cases}
  \end{equation}
\end{proposition}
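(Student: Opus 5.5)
The plan is the standard variational computation: differentiate the action along a type I variation, integrate by parts, and read off the equations from the arbitrariness of $\delta q$ and $\delta p$.

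\emph{Step 1: first variation.} Let $(q_\epsilon,p_\epsilon)$ be a family of curves through $(q,p)$ with $\delta q = \frac{d}{d\epsilon}\big|_{\epsilon=0} q_\epsilon$, $\delta p = \frac{d}{d\epsilon}\big|_{\epsilon=0} p_\epsilon$, and $\delta q(0)=\delta q(T)=0$. Since $Q$ is a vector space, we may differentiate under the integral sign, and $\delta$ commutes with $d/dt$. This gives
\begin{equation*}
\delta \int_0^T (p\dot q - H)\,dt = \int_0^T \Big( \delta p\, \dot q + p\, \dot{\delta q} - D_1H(q,p)(\delta q) - D_2H(q,p)(\delta p)\Big) dt .
\end{equation*}
Here $D_2H(q,p)\in Q^{**}\simeq Q$, so $D_2H(q,p)(\delta p)=\delta p\,(D_2H(q,p))$.

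\emph{Step 2: integration by parts.} Integrate the term $p\,\dot{\delta q}$ by parts. The boundary term $p\,\delta q\big|_0^T$ vanishes precisely because the variation is of type I. The full expression in the definition then becomes
\begin{equation*}
\int_0^T \Big( \delta p\big(\dot q - D_2H(q,p)\big) + \big(-\dot p - D_1H(q,p) + \check\phi(q,p)\big)(\delta q)\Big)\,dt .
\end{equation*}

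\emph{Step 3: both directions.} If \eqref{eq:Hamilton_equations} holds, the integrand vanishes identically, so $(q,p)$ is a trajectory. Conversely, suppose the integral vanishes for all type I variations. The variations $\delta q$ and $\delta p$ are independent, and every pair of smooth curves with $\delta q(0)=\delta q(T)=0$ is realized as the variation of some family, for instance $(q+\epsilon\,\delta q,\,p+\epsilon\,\delta p)$.
\begin{enumerate}
\item Take $\delta q=0$ and $\delta p$ arbitrary. The fundamental lemma of the calculus of variations, applied componentwise in a basis, gives $\dot q = D_2H(q,p)$ on $(0,T)$.
\item Take $\delta p=0$ and $\delta q$ arbitrary with compact support in $(0,T)$. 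The same lemma gives $\dot p = -D_1H(q,p)+\check\phi(q,p)$ on $(0,T)$.
\end{enumerate}
Continuity of both sides extends these equations to $[0,T]$.

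The main obstacle is a matter of interpretation rather than any computation. The definition is stated for a fixed interval $[0,T]$, while the curve is defined on $\R$. We read ``trajectory'' as requiring the variational condition for every $T$, or equivalently on every compact interval, which then yields \eqref{eq:Hamilton_equations} for all $t$. We also need the curves to be at least $C^1$ so that the integration by parts and the fundamental lemma apply.
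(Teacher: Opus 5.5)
Your proof is correct and takes the same route as the paper. You compute the first variation, integrate $p\,\dot{\delta q}$ by parts so that the boundary term $p\,\delta q\big|_0^T$ vanishes for type I variations, and read off the forced Hamilton equations from the arbitrariness of $\delta q$ and $\delta p$. Your explicit use of the fundamental lemma and your remark about quantifying over $T$ only make precise what the paper leaves implicit in ``since the variations are arbitrary''.
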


\begin{proof}
  Let $(q,p)$ be a curve in $T^*Q$ and $(\delta q,\delta p)$ an
  infinitesimal variation over $(q,p)$. Then, the standard integration
  by parts argument leads to
  \begin{equation}\label{eq:Hamilton_equations-1}
    \begin{split}
      \delta \int_0^T (p \dot{q} -& H(q,p)) \ dt + \int_0^T \check{\phi}(q,p)(\delta q) \ dt = p(T) \delta q(T) - p(0) \delta q(0) \\& + \int_0^T \left( \left( - \dot{p} - D_1H(q,p) + \check{\phi}(q,p) \right) \, \delta q + \left( \dot{q} - D_2H(q,p) \right) \, \delta p \right) \ dt.
    \end{split}
  \end{equation}
	
  If $(q,p)$ is a trajectory of $\calM$ and $(\delta q,\delta p)$ has
  endpoints of type I, then, since the variations are arbitrary,
  $-\dot{p} - D_1H(q,p) + \check{\phi}(q,p) = 0$ and
  $\dot{q} - D_2H(q,p) = 0$, proving~\eqref{eq:Hamilton_equations}.
	
  Conversely, if $(q,p)$ satisfies \eqref{eq:Hamilton_equations} and
  $(\delta q,\delta p)$ has type I, then
  \eqref{eq:Hamilton_equations-1} yields
  \begin{equation*}
    \delta \int_0^T (p \dot{q} - H(q,p)) \ dt + \int_0^T
    \check{\phi}(q,p)(\delta q) \ dt = 0,    
  \end{equation*}
  that is, $(q,p)$ is a trajectory of $\calM$.
\end{proof}

The equations \eqref{eq:Hamilton_equations} are called \jdef{forced
  Hamilton equations} (see, for example, \cite[Section
3.1.2]{ar:marsden_west:2001:discrete_mechanics_and_variational_integrators}).

\begin{example}\label{ex:simple_with_kappa_and_nu}
  Let $Q \CE \R^n$ equipped with the canonical inner product,
  \begin{equation*}
    H(q,p) \CE  \frac{\norm{p}^2}{2m} + V(q) \stext{ and } 
    \phi(q,p) \CE  a(q,p)\, dq ,
  \end{equation*}
  for constant $m > 0$ and a smooth function $a : T^*Q \lra \R^n$. We
  call such systems of \jdef{mechanical type}.
  
  In this case, the equations~\eqref{eq:Hamilton_equations} become
  \begin{equation}\label{ex:simple_with_kappa_and_nu-equations}
    \begin{cases}
      \dot{q}(t) = \frac{1}{m} p(t) \\
      \dot{p}(t) = -\nabla V(q(t)) + a(q(t),p(t)).
    \end{cases}
  \end{equation}
  Solving these equations for $V(q)\CE \nu q$ and $a(q,p) \CE \kappa$
  with constants $\nu, \kappa\in\R^n$, we find that, for initial
  conditions $(q_0,p_0)$, we have $\dot{q}_0 \CE \frac{p_0}{m}$, and
  the trajectory of the system is
  \begin{equation*}
    (q(t),p(t)) = \left( \frac{1}{2m} (\kappa - \nu) t^2 + \frac{p_0}{m} t +
      q_0 , (\kappa - \nu) t + p_0 \right).
  \end{equation*}
\end{example}

The previous formulation works for trajectories whose initial and
final positions are known. There are other formulations in which the
known data are $q(0)$ and $p(T)$ or $p(0)$ and $q(T)$. In this
direction, and inspired by
\cite{ar:leok_zhang:2011:discrete_hamiltonian_variational_integrators},
we have the following definition.

\begin{definition}\label{def:type_II_Hamilton_trajectories}
  Let $\calM \CE (Q,H,\phi)$ be a forced Hamiltonian system. A curve
  $(q,p)$ on $T^*Q$ is a \jdef{type II trajectory} of $\calM$ if it
  satisfies
  \begin{equation*}
    \delta \left( p(T) q(T) - \int_0^T (p \dot{q} - H(q,p)) \ dt \right) -
    \int_0^T \check{\phi}(q,p)(\delta q) \ dt = 0
  \end{equation*}
  for all infinitesimal variations $(\delta q,\delta p)$ over $(q,p)$
  such that $\delta q(0) = 0$ and $\delta p(T) = 0$.
\end{definition}

\begin{proposition}
  Let $\calM \CE (Q,H,\phi)$ be a forced Hamiltonian system. A curve
  $(q,p)$ on $T^*Q$ is a type II trajectory of $\calM$ if and only if
  it satisfies
  \begin{equation*}
    \begin{cases}
      \dot{q}(t) = D_2 H(q(t),p(t)) \\
      \dot{p}(t) = -D_1 H(q(t),p(t)) + \check{\phi}(q(t),p(t)).
    \end{cases}
  \end{equation*}
\end{proposition}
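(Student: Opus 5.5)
We follow the proof of Proposition~\ref{prop:Hamilton_equations}: reuse its integration by parts identity~\eqref{eq:Hamilton_equations-1} and handle the boundary term $p(T)q(T)$, whose variation is new.

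First, for an arbitrary curve $(q,p)$ and an arbitrary infinitesimal variation $(\delta q,\delta p)$, we have $\delta\bigl(p(T)q(T)\bigr) = \delta p(T)\, q(T) + p(T)\,\delta q(T)$. Subtracting~\eqref{eq:Hamilton_equations-1} (note that the sign of the force term is also reversed in Definition~\ref{def:type_II_Hamilton_trajectories}) gives
\begin{equation*}
  \begin{split}
    \delta \Bigl( p(T) q(T) - \int_0^T (p \dot{q} - H(q,p))\, dt \Bigr) &-
    \int_0^T \check{\phi}(q,p)(\delta q)\, dt = \delta p(T)\, q(T) + p(0)\,\delta q(0) \\
    &- \int_0^T \Bigl( \bigl( -\dot{p} - D_1H(q,p) + \check{\phi}(q,p) \bigr)\, \delta q + \bigl( \dot{q} - D_2H(q,p) \bigr)\, \delta p \Bigr)\, dt .
  \end{split}
\end{equation*}
The $p(T)\delta q(T)$ terms cancel. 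For variations with $\delta q(0)=0$ and $\delta p(T)=0$ the remaining boundary terms vanish, so the left-hand side equals minus the integral.

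For the forward direction, assume $(q,p)$ is a type II trajectory. Then the integral vanishes for every admissible variation. The admissible class contains all $(\delta q,\delta p)$ with compact support in $(0,T)$, and $\delta q$, $\delta p$ can be chosen independently. The fundamental lemma of the calculus of variations then yields both forced Hamilton equations. Conversely, if $(q,p)$ satisfies the forced Hamilton equations, both integrands vanish identically, and so does the boundary contribution for admissible variations, hence $(q,p)$ is a type II trajectory.

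There is no serious obstacle. The only point requiring care is checking the boundary terms: $\delta(p(T)q(T))$ must cancel exactly the $p(T)\delta q(T)$ term from the integration by parts, leaving only terms killed by the type II endpoint conditions. Once this is confirmed, the argument is the same as in Proposition~\ref{prop:Hamilton_equations}.
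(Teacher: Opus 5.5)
Your proof is correct and follows essentially the same route as the paper: the paper writes the same integration-by-parts identity, with boundary terms $q(T)\delta p(T) + p(0)\delta q(0)$, and then repeats the argument of Proposition~\ref{prop:Hamilton_equations}. Your explicit check that $\delta\bigl(p(T)q(T)\bigr)$ cancels the $p(T)\delta q(T)$ term from~\eqref{eq:Hamilton_equations-1} simply spells out how that identity is obtained.
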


\begin{proof}
  Let $(q,p)$ be a curve in $T^*Q$ and let
  $(\delta q,\delta p)$ be an infinitesimal variation over
  $(q,p)$. Then, by the standard integration by parts argument, we
  have
  \begin{gather*}
    \delta \left( p(T) q(T) - \int_0^T (p \dot{q} - H(q,p)) \ dt \right) -
    \int_0^T \check{\phi}(q,p)(\delta q) \ dt =
    q(T) \delta p(T) + p(0) \delta q(0)\\
    - \int_0^T \left( (-\dot{p} - D_1H(q,p) + \check{\phi}(q,p)) \ \delta q +
      (\dot{q} - D_2H(q,p)) \ \delta p \right) \ dt.
  \end{gather*}
  To complete the proof we just apply the same reasoning as in the
  proof of Proposition~\ref{prop:Hamilton_equations}.
\end{proof}

\begin{remark}\label{rem:bv_continuous_flows}
  The trajectories of the forced Hamiltonian systems are solutions of
  the forced Hamilton equations~\eqref{eq:Hamilton_equations}, so that
  the initial value problem is well studied and it is easy to discuss
  the existence and uniqueness of solutions. In the case of the type
  II trajectories, they are solutions
  of~\eqref{eq:Hamilton_equations}, but with different boundary
  conditions, making the analysis harder. In what follows we will
  assume the existence and uniqueness of these solutions so that we
  can define the \jdef{boundary value flow}
  $\FlowCBV{\calM}{h}{t}(q_0,p_1)$ that assigns to each time $t$ the
  value of the trajectory $(q(t),p(t))$ of the system $\calM$ that
  satisfies $q(0)=q_0$ and $p(h)=p_1$. See, also,
  Proposition~\ref{prop:flow_of_hamilton_equations-b}.
\end{remark}

\begin{example}
  Returning to Example~\ref{ex:simple_with_kappa_and_nu}, we have
  that, given boundary conditions $q(0) \CE q_0$ and $p(h) \CE p_1$,
  the flow is given by
  \begin{equation*}
    \FlowCBV{}{h}{t}(q_0,p_1) = \left( \frac{1}{2m} (\kappa - \nu) t^2 +
      \frac{p_1 - h(\kappa - \nu)}{m} t + q_0 , (\kappa - \nu)(t - h) +
      p_1 \right).
  \end{equation*}
\end{example}

\begin{definition}
  Let $\calM \CE (Q,H,\phi)$ be a forced Hamiltonian system. Its
  \jdef{Legendre transform} is the smooth map ---commuting with the
  projections--- $\ff H : T^*Q \lra TQ$ given by
  \begin{equation*}
    \ff H(q,p) \CE  D_2H(q,p).
  \end{equation*}	
  $\calM$ is said to be \jdef{regular} if $\ff H$ is a local
  diffeomorphism and \jdef{hyperregular} if it is a (global)
  diffeomorphism. 
\end{definition}

\begin{remark}
  Notice that
  $D_2H(q,p) \in Q \times Q^{**} \simeq Q \times Q \simeq TQ$, since
  $Q$ is a vector space.
\end{remark}

We close this section with a result that proves that, locally,
solutions to the forced Hamilton
equations~\eqref{eq:Hamilton_equations} with boundary conditions do
exist. We assume that $Q=\R^n$ or, more generally, $Q\subset \R^n$ is
an open subset. Of course, this means no loss of generality as this
identification corresponds to the choice of a basis in the $\R$-vector
space $Q$. Also, for $x\in\R^n$, we use the norm
$\norm{x}_\infty\CE \max\{\abs{x_1},\ldots, \abs{x_n}\}$ and denote
the corresponding balls by $B^\infty$.

\begin{proposition}\label{prop:flow_of_hamilton_equations-b}
  Let $\calM\CE (Q,H,\phi)$ be a forced Hamiltonian system. For
  $(q_0,p_1)\in Q\times Q^*$, there exist constants $r,r'>0$ and
  $T_-<T_+$ such that $0\in (T_-,T_+)$ and, for any $T\in [T_-,T_+]$
  and
  $(q_0',p_1') \in \conj{B^\infty_{r'}(q_0,p_1)} \subset Q\times Q^*$,
  the boundary value problem
  \begin{equation}\label{eq:flow_of_hamilton_equations-b-II-eqs}
    \begin{cases}
      \dot{q}(t) = D_2H(q(t),p(t)), \stext{ for } T_-<t<T_+\\
      \dot{p}(t) = -D_1H(q(t),p(t)) + \check{\phi}(q(t),p(t))
      \stext{ for } T_-<t<T_+\\
      q(0) = q_0',\stext{ and } p(T)=p_1'
    \end{cases}
  \end{equation}
  has a unique solution $(q(t),p(t))$ that satisfies
  $\norm{(q(0),p(0))-(q_0,p_1)}_\infty < r$. That solution is a smooth
  function of $t$, $T$ and $(q_0',p_1')$, simultaneously.
\end{proposition}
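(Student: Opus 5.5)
We reduce the boundary value problem to a fixed-point problem: a shooting argument for the unknown initial momentum, solved by the implicit function theorem.

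Write $X(q,p)\CE (D_2H(q,p),\,-D_1H(q,p)+\check{\phi}(q,p))$ for the smooth vector field on $Q\times Q^*\subset\R^{2n}$. Let $\Phi_t(q,p)$ be its flow, which is defined and smooth on a neighborhood of $\{0\}\times\{(q_0,p_1)\}$ in $\R\times\R^{2n}$. A solution of \eqref{eq:flow_of_hamilton_equations-b-II-eqs} is then the same as a choice of initial momentum $\tilde p$ with $\pr_2\Phi_T(q_0',\tilde p)=p_1'$, after which $(q(t),p(t))=\Phi_t(q_0',\tilde p)$.

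So consider the smooth map
\begin{equation*}
  G(T,q_0',p_1',\tilde p)\CE \pr_2\Phi_T(q_0',\tilde p)-p_1',
\end{equation*}
defined near $(0,q_0,p_1,p_1)$. We have $G(0,q_0,p_1,p_1)=0$. Since $\Phi_0=\mathrm{id}$, the derivative $D_{\tilde p}G(0,q_0,p_1,p_1)$ is the identity, hence invertible. The implicit function theorem then gives neighborhoods and a smooth function $\tilde p=g(T,q_0',p_1')$. It is defined for $\abs{T}\le\tau$ and $\norm{(q_0',p_1')-(q_0,p_1)}_\infty\le r'$, solves $G=0$, and is the unique solution with $\norm{\tilde p-p_1}_\infty<\rho$ for some $\rho>0$.

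We set $T_\pm\CE\pm\tau$, shrinking $\tau$ if needed so that $\Phi_t(q_0',g)$ is defined for all $t\in(T_-,T_+)$; this is possible by compactness of the closed parameter ball. Then $(q(t),p(t))\CE\Phi_t(q_0',g(T,q_0',p_1'))$ is a solution depending smoothly on $(t,T,q_0',p_1')$ jointly, as a composition of smooth maps.

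For uniqueness, take any solution on $(T_-,T_+)$ with $\norm{(q(0),p(0))-(q_0,p_1)}_\infty<r$. By uniqueness for the initial value problem it equals $\Phi_t(q_0',p(0))$. Moreover $p(0)$ solves $G=0$ and satisfies $\norm{p(0)-p_1}_\infty<r$. Choosing $r\le\rho$, the implicit-function uniqueness forces $p(0)=g(T,q_0',p_1')$. We must also take $r\ge r'$ so that the constructed solution itself qualifies; this holds because $q(0)=q_0'$, and $p(0)=g$ is within $\rho$ of $p_1$. To arrange this consistently, shrink $r'$ and $\tau$ until $\norm{g-p_1}_\infty<r$, using continuity of $g$ with $g(0,q_0,p_1)=p_1$.

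The main obstacle is the bookkeeping of the neighborhoods, not any analytic estimate. The flow must exist on the full interval $(T_-,T_+)$ uniformly over the closed parameter ball. The uniqueness radius $r$ must lie below the implicit-function radius $\rho$, while still containing the constructed solution. We handle this by fixing $\rho$ first, then choosing $r\le\rho$, and only then shrinking $r'$ and $\tau$.
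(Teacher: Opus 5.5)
Your proposal is correct and is essentially the route the paper intends. The paper gives no detailed argument and only defers to exercises in Keller on two-point boundary value problems and to Teschl's theorem on smooth dependence on initial data; your shooting map $G(T,q_0',p_1',\tilde p)=\pr_2\Phi_T(q_0',\tilde p)-p_1'$, with $D_{\tilde p}G=\mathrm{id}$ at $T=0$, combined with the implicit function theorem and the smooth flow, is a clean way to carry that out. The only bookkeeping slip is that you need $r'<r$ strictly rather than $r\ge r'$: $q_0'$ may lie on the boundary of the closed ball $\conj{B^\infty_{r'}(q_0,p_1)}$, while the selection condition $\norm{(q(0),p(0))-(q_0,p_1)}_\infty<r$ is strict.
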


\begin{proof}
  This technical result can be proved starting from a modification of
  the guidance provided in several exercises
  of~\cite{bo:keller-numerical_methods_for_two_point_boundary_value_problems}
  (see Exercises 1.1.4, 1.2.13 and 1.2.14), as well as Theorem 2.10
  of~\cite{bo:teschl-ordinary_differential_equations_and_dynamical_systems}.
\end{proof}

\begin{remark}
  The boundary value
  problem~\eqref{eq:flow_of_hamilton_equations-b-II-eqs} is well
  behaved, even when $T=0$ (where it becomes an initial value problem)
  because the boundary conditions are set on independent functions
  ($q(0)$ vs. $p(T)$). This should be contrasted with the
  corresponding differential problem for (type I) trajectories where
  the boundary conditions are imposed on the same function ($q(0)$ and
  $q(T)$) which leads to singular behavior of the system when $T=0$.
\end{remark}


\section{Forced discrete Hamiltonian systems}
\label{sec:forced_discrete_Hamiltonian_systems}

Given a forced Hamiltonian system $\calM \CE (Q,H,\phi)$ and $h > 0$,
we introduce a notion of discrete Hamiltonian as an approximation
\begin{equation*}
  H_d(q_0,p_1) \approx p(h) q(h) -
  \int_0^h (p(t) \dot{q}(t) - H(q(t),p(t))) \ dt,  
\end{equation*}
where $(q(t),p(t)) \CE \FlowCBV{\calM}{h}{t}(q_0,p_1)$ is the
trajectory of $\calM$ that satisfies the boundary conditions
$q(0) = q_0$ and $p(h) = p_1$. Similarly, the discrete force is an
approximation
\begin{equation*}
  \phi_d(q_0,p_1)(\delta q_0, \delta p_1) \approx
  \int_0^h \phi(q(t),p(t))
  T_{(q_0,p_1)} \FlowCBV{\calM}{h}{t}(\delta q_0, \delta p_1) \ dt.
\end{equation*}

\begin{definition}\label{def:FDHS}
  A \jdef{forced discrete Hamiltonian system} (FDHS) is a triple
  $\calM_d \CE (Q,H_d,\phi_d)$ where $Q$ is a finite dimensional real
  vector space, $H_d : T^*Q \lra \R$ is a smooth function and
  $\phi_d$ is a $1$-form on $T^*Q$.

  A discrete curve $(q_\cdot,p_\cdot) = ((q_0,p_0),\ldots,(q_N,p_N))$ is a
  \jdef{type II trajectory} of $\calM_d$ if it satisfies
  \begin{equation*}
    \delta \left( p_N q_N - \sum_{k=0}^{N-1} (p_{k+1} q_{k+1} - H_d(q_k,p_{k+1}))
    \right) - \sum_{k=0}^{N-1} \phi_d(q_k,p_{k+1})(\delta q_k,\delta p_{k+1}) = 0
  \end{equation*}
  for all infinitesimal variations $(\delta q_\cdot,\delta p_\cdot)$ over
  $(q_\cdot,p_\cdot)$ with boundary conditions $\delta q_0 = 0$ and
  $\delta p_N = 0$.
\end{definition}

In what follows, unless explicitly stated otherwise, we will consider
systems with type II trajectories. Thus, we will drop the ``type II''
in the name.

\begin{example}\label{ex:simple_with_kappa_and_nu-discrete}
  A simple FDHS on $Q\CE \R^n$ that is, somehow, a discretization of
  the system that appears in
  Example~\ref{ex:simple_with_kappa_and_nu}, whose notation we use, is
  given by 
  \begin{gather*}
    H_d(q,p):=pq + h H(q,p) = pq +
    h \left(\frac{\norm{p}^2}{2m} + V(q)\right),\\
    \phi_d(q,p) \CE h\, \phi(q,p) = h\, a(q,p) dq,
  \end{gather*}
  where $h>0$ is a constant.
\end{example}

\begin{proposition}\label{prop:discrete_Hamilton_equations}
  Let $\calM_d \CE (Q,H_d,\phi_d)$ be an FDHS. A discrete curve
  $(q_\cdot,p_\cdot) = ((q_0,p_0),\ldots,(q_N,p_N))$ is a trajectory
  of $\calM_d$ if and only if it satisfies
  \begin{equation}\label{eq:discrete_Hamilton_equations}
    \begin{cases}
      q_k = D_2H_d (q_{k-1},p_k) - \phi_d^p(q_{k-1},p_k), \\
      p_k = D_1H_d (q_k,p_{k+1}) - \phi_d^q(q_k,p_{k+1})
    \end{cases}
    \stext{for} k=1,\ldots,N-1,
  \end{equation}
  where we have used the decomposition associated to the Cartesian
  product,
  \begin{equation}\label{eq:phi_d_qp_decomposition}
    \phi_d(q,p)(\delta q,\delta p) = \phi_d^q(q,p)(\delta q) +
    \phi_d^p(q,p)(\delta p).
  \end{equation}
\end{proposition}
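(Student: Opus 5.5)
We compute the variation of the discrete action directly, mirroring the proof of Proposition~\ref{prop:Hamilton_equations} but with summation by parts in place of integration by parts. Write
\begin{equation*}
  \mathfrak{S}(q_\cdot,p_\cdot)\CE p_N q_N - \sum_{k=0}^{N-1} \bigl(p_{k+1} q_{k+1} - H_d(q_k,p_{k+1})\bigr).
\end{equation*}
The term $p_Nq_N$ cancels the $k=N-1$ summand $p_Nq_N$, so $\mathfrak{S} = -\sum_{k=1}^{N-1} p_kq_k + \sum_{k=0}^{N-1} H_d(q_k,p_{k+1})$. This cancellation is the discrete counterpart of the boundary term in Definition~\ref{def:type_II_Hamilton_trajectories}, and it keeps the boundary bookkeeping clean.

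Differentiating along an infinitesimal variation gives
\begin{equation*}
  \delta\mathfrak{S} = -\sum_{k=1}^{N-1}(q_k\,\delta p_k + p_k\,\delta q_k) + \sum_{k=0}^{N-1}\bigl(D_1H_d(q_k,p_{k+1})\,\delta q_k + D_2H_d(q_k,p_{k+1})\,\delta p_{k+1}\bigr).
\end{equation*}
We subtract the force sum, using the decomposition~\eqref{eq:phi_d_qp_decomposition} to write $\phi_d(q_k,p_{k+1})(\delta q_k,\delta p_{k+1}) = \phi_d^q(q_k,p_{k+1})(\delta q_k)+\phi_d^p(q_k,p_{k+1})(\delta p_{k+1})$. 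We then reindex the $\delta p_{k+1}$ terms as $\delta p_k$ with $k=1,\ldots,N$. Collecting coefficients, the variation of the forced action becomes
\begin{multline*}
  \bigl(D_1H_d(q_0,p_1)-\phi_d^q(q_0,p_1)\bigr)\delta q_0 + \bigl(D_2H_d(q_{N-1},p_N)-\phi_d^p(q_{N-1},p_N)\bigr)\delta p_N\\
  + \sum_{k=1}^{N-1}\Bigl[\bigl(D_1H_d(q_k,p_{k+1})-\phi_d^q(q_k,p_{k+1})-p_k\bigr)\delta q_k + \bigl(D_2H_d(q_{k-1},p_k)-\phi_d^p(q_{k-1},p_k)-q_k\bigr)\delta p_k\Bigr].
\end{multline*}
Here $D_2H_d(q_{N-1},p_N)$ is naturally an element of $Q^{**}\simeq Q$, so it pairs with $\delta p_N$ as in the remark after the Legendre transform.

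Under the admissible boundary conditions $\delta q_0=0$ and $\delta p_N=0$, the two boundary terms vanish. The interior variations $\delta q_k,\delta p_k$ for $k=1,\ldots,N-1$ are arbitrary and independent, since $T^*Q\simeq Q\times Q^*$ and any choice of vectors is realized, e.g.\ by straight-line variations. Hence the variation vanishes for all admissible variations if and only if every bracketed coefficient vanishes, which is exactly~\eqref{eq:discrete_Hamilton_equations}. Conversely, if~\eqref{eq:discrete_Hamilton_equations} holds, the same identity shows that the variation is zero for every admissible variation.

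The main obstacle is only careful index bookkeeping: the reindexing of the $\delta p_{k+1}$ terms and the treatment of the endpoints $k=0$ and $k=N$. In particular we must check that $p_0$ and $q_N$ never appear, which is why the equations involve only $k=1,\ldots,N-1$. Beyond that, no analytic difficulty is expected.
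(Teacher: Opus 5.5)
Your proposal is correct and follows essentially the same route as the paper. You expand the variation of the forced discrete action, split $\phi_d$ via \eqref{eq:phi_d_qp_decomposition}, reindex the $\delta p_{k+1}$ terms, use $\delta q_0=0$ and $\delta p_N=0$ to remove the boundary terms, and read off the interior coefficients; your collected identity is exactly the one in the paper's proof, and cancelling $p_Nq_N$ first is just a convenient way to organize the same computation.
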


\begin{proof}
  Let $(\delta q_\cdot,\delta p_\cdot)$ be an infinitesimal variation
  over $(q_\cdot,p_\cdot)$. Then,
  \begin{equation}\label{eq:discrete_Hamilton_equations-1}
    \begin{split}
      \delta \bigg( p_N &q_N - \sum_{k=0}^{N-1} (p_{k+1} q_{k+1} - H_d(q_k,p_{k+1})) \bigg) - \sum_{k=0}^{N-1} \phi_d(q_k,p_{k+1})(\delta q_k,\delta p_{k+1}) \\
             =&  \sum_{k=1}^{N-1} \big( \left( D_1H_d(q_k,p_{k+1}) - p_k - \phi_d^q(q_k,p_{k+1}) \right) (\delta q_k)  \\
             &+  \left( D_2H_d(q_{k-1},p_k) - q_k - \phi_d^p(q_{k-1},p_k) \right) (\delta p_k) \big) \\
             &+ \left( D_1H_d(q_0,p_1) - \phi_d^q(q_0,p_1) \right) (\delta q_0) + \left( D_2H_d(q_{N-1},p_N) - \phi_d^p(q_{N-1},p_N) \right) (\delta p_N).
    \end{split}
  \end{equation}
	
  Using the same arguments as in the previous propositions, if
  $(q_\cdot,p_\cdot)$ is a trajectory and
  $(\delta q_\cdot,\delta p_\cdot)$ satisfies the corresponding
  boundary conditions, then the first expression
  in~\eqref{eq:discrete_Hamilton_equations-1} should vanish, which, in
  terms of the last expression proves
  equation~\eqref{eq:discrete_Hamilton_equations}.

  Conversely, if $(q_\cdot,p_\cdot)$
  satisfies~\eqref{eq:discrete_Hamilton_equations} and
  $(\delta q_\cdot,\delta p_\cdot)$ satisfies the corresponding
  boundary conditions, then the last expression
  in~\eqref{eq:discrete_Hamilton_equations-1} shows that it is a
  trajectory of the system.
\end{proof}

A similar approach to the variational principle used in
Definition~\ref{def:FDHS} has, also, been considered in
\cite{ar:deLeon_lainz_lopezGordon-discrete_hamilton_jacobi_theory_for_systems_with_external_forces}. A
difference between the two approaches is that we consider $q_\cdot$
and $p_\cdot$ to be independent, whereas in the cited work, they are
functionally related. This is reflected in the difference between our
equations~\eqref{eq:discrete_Hamilton_equations} and their (4a).

\begin{example}\label{ex:simple_with_kappa_and_nu-discrete-equations}
  The equations of motion~\eqref{eq:discrete_Hamilton_equations} for
  the of Example~\ref{ex:simple_with_kappa_and_nu-discrete} are
  \begin{equation*}
    \begin{cases}
      q_k = q_{k-1} + \frac{h}{m} p_k,\\
      p_k= p_{k+1} + h\nabla V(q_k)-h\, a(q_k,p_{k+1}),
    \end{cases}
    \stext{for} k=1,\ldots,N-1.
  \end{equation*}
  Interestingly, a relabeling of the first equations leads to
  \begin{equation*}
    \begin{cases}
      q_{k+1} = q_{k} + \frac{h}{m} p_{k+1}, \stext{ for } k=0,\ldots,N-2,\\
      p_{k+1}= p_{k} - h\nabla V(q_k) + h\, a(q_k,p_{k+1}), \stext{ for } k=1,\ldots,N-1,
    \end{cases}
  \end{equation*}
  that is the well known semi-explicit partitioned Euler
  method\footnote{This Euler method is sometimes called symplectic,
    which it is for the canonical symplectic structure in $T^*\R^n$
    \emph{when there are no forces}.}  of order $1$ applied
  to~\eqref{ex:simple_with_kappa_and_nu-equations} (see the expression
  (1.9) on p. 4
  of~\cite{bo:hairer_lubich_wanner-geometric_numerical_integration}). Moreover,
  this connection to the Euler method remains valid if the Hamiltonian
  and force used in Example~\ref{ex:simple_with_kappa_and_nu} are
  replaced by arbitrary $H$ and $\phi$.
\end{example}

\begin{example}\label{ex:trajectories_of_exact_FDHS_h=0}
  Let $\calM_d\CE (Q,H_d,\phi_d)$ be the forced discrete Hamiltonian
  system given by $H_d(q_0,p_1)\CE p_1q_0$ and $\phi_d(q_0,p_1)\CE
  0$.
  The equations of motion~\eqref{eq:discrete_Hamilton_equations}
  become
  \begin{equation*}
    q_k = q_{k-1},\quad p_k = p_{k+1} \stext{ for } k=1,\ldots,N-1.
  \end{equation*}
  Then, the trajectories of $\calM_d$ must satisfy
  \begin{equation*}
    q_0=\cdots=q_{N-1} \stext{ and } p_1=\cdots=p_N.
  \end{equation*}
\end{example}

Notice that the equations~\eqref{eq:discrete_Hamilton_equations} that
characterize the trajectories of an FDHS do not involve neither $p_0$
nor $q_N$. We could use a notion of trajectory in which those points
are absent, considering curves such as
$((q_0,p_1),\ldots,(q_{N-1},p_N))$, but we opt for keeping them and
asking the points to satisfy an additional condition, as it is done
in~\cite[Section
3.1]{ar:leok_zhang:2011:discrete_hamiltonian_variational_integrators}.

\begin{definition}
  Let $\calM_d \CE (Q,H_d,\phi_d)$ be a forced discrete Hamiltonian
  system. A trajectory
  $(q_\cdot,p_\cdot) \CE ((q_0,p_0),\ldots,(q_N,p_N))$ of $\calM_d$ is
  an \jdef{extended trajectory} of $\calM_d$ if it satisfies
  \begin{equation*}
    p_0 = D_1H_d (q_0,p_1) - \phi_d^q(q_0,p_1) \stext{and} 
    q_N = D_2H_d (q_{N-1},p_N) - \phi_d^p(q_{N-1},p_N).
  \end{equation*}
\end{definition}

\begin{remark}
  After~\eqref{eq:discrete_Hamilton_equations}, a discrete curve
  $(q_\cdot,p_\cdot) \CE ((q_0,p_0),\ldots,(q_N,p_N))$ is an extended
  trajectory of the FDHS $(Q,H_d,\phi_d)$ if and only if it satisfies
  \begin{equation}\label{eq:extended_discrete_Hamilton_equations}
    \begin{cases}
      q_{k+1} = D_2H_d (q_k,p_{k+1}) - \phi_d^p(q_k,p_{k+1}), \\
      p_k = D_1H_d (q_k,p_{k+1}) - \phi_d^q(q_k,p_{k+1}),
    \end{cases}
    \stext{for} k=0,\ldots,N-1.
  \end{equation}
\end{remark}

\begin{example}\label{ex:extended_trajectories_of_exact_FDHS_h=0}
  Continuing with Example~\ref{ex:trajectories_of_exact_FDHS_h=0}, we
  see that the extended trajectories of $\calM_d$ must satisfy
  \begin{equation*}
    q_0=\cdots=q_{N} \stext{ and } p_0=\cdots=p_N.
  \end{equation*}
  Thus, in terms of the discrete flow (to be discussed later),
  $\FlowD{\calM_d} = id_{T^*Q}$.
\end{example}

\begin{definition}
  Let $(Q,H_d,\phi_d)$ be an FDHS. We define its \jdef{forced discrete
    Legendre transforms} $\ff_{\phi_d}^\pm H_d : T^*Q \lra T^*Q$ by
  \begin{equation*}
    \begin{split}
      \ff_{\phi_d}^+ H_d (q_0,p_1) &\CE  \left( D_2H_d(q_0,p_1) -
                                     \phi_d^p(q_0,p_1) , p_1 \right), \\
      \ff_{\phi_d}^- H_d (q_0,p_1) &\CE  \left( q_0 , D_1H_d(q_0,p_1) -
                                     \phi_d^q(q_0,p_1) \right).
    \end{split}
  \end{equation*}
\end{definition}

\begin{remark}\label{rem:discrete_Hamilton_equations-ff}
  The forced discrete Hamilton
  equations~\eqref{eq:discrete_Hamilton_equations} of the FDHS
  $(Q,H_d,\phi_d)$ may be rewritten as
  \begin{equation}\label{eq:forced_discrete_Hamiton_equations-ff}
    \ff_{\phi_d}^+ H_d(q_{k-1},p_k) = \ff_{\phi_d}^- H_d(q_k,p_{k+1}),
    \stext{ for } k = 1,\ldots,N-1,
  \end{equation}
  and the condition for a trajectory to be extended is equivalent to
  \begin{equation}\label{eq:extended_forced_discrete_Hamiton_equations-ff}
    (q_0,p_0) = \ff_{\phi_d}^- H_d(q_0,p_1) \stext{and} 
    (q_N,p_N) = \ff_{\phi_d}^+ H_d(q_{N-1},p_N).
  \end{equation}
\end{remark}

\begin{definition}
  We say that an FDHS $\calM_d\CE (Q,H_d,\phi_d)$ is \jdef{regular} if
  the forced discrete Legendre transforms
  $\ff_{\phi_d}^\pm H_d : T^*Q \lra T^*Q$ are local diffeomorphisms;
  if they are diffeomorphisms, we say that $\calM_d$ is
  \jdef{hyperregular}.
\end{definition}

\begin{remark}\label{rem:condtions_for_discrete_regularity}
  Let $\calM_d\CE (Q,H_d,\phi_d)$ be a forced discrete Hamiltonian
  system. Then, using the triviality of $T^*Q\simeq Q\times Q^*$ and
  $TQ\simeq Q\times Q$, we see that the regularity condition for
  $\calM_d$ is equivalent to the nonsingularity (invertibility) of
  both $D_{12}H_d(q_0,p_1)-D_1\phi_d^p(q_0,p_1)$ and
  $ D_{21}H_d(q_0,p_1)-D_2\phi_d^q(q_0,p_1)$.
\end{remark}

For practical purposes, it is important to notice that the equations
of motion for trajectories and extended trajectories of FDHSs can be
decoupled (in $k$) and, so, can be solved iteratively instead of
having to solve them concurrently. We have the following result, whose
proof is a straightforward verification.

\begin{lemma}\label{le:discrete_extended_trajectories_N_and_1}
  Let $\calM_d\CE (Q,H_d,\phi_d)$ be a forced discrete Hamiltonian
  system and $(q_\cdot,p_\cdot) \CE ((q_0,p_0),\ldots,(q_N,p_N))$ be a
  discrete path in $T^*Q$. Then $(q_\cdot,p_\cdot)$ is an extended
  trajectory of $\calM_d$ if and only if each
  $((q_k,p_k),(q_{k+1},p_{k+1}))$ is an extended trajectory of
  $\calM_d$ for $k=0,\ldots,N-1$.
\end{lemma}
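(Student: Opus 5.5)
The natural route is through the characterization of extended trajectories given by~\eqref{eq:extended_discrete_Hamilton_equations}. That system lets us reduce the lemma to a bookkeeping of equations indexed by $k$. The plan is to show that, for a path of length $N$, the set of conditions in~\eqref{eq:extended_discrete_Hamilton_equations} for $k=0,\ldots,N-1$ is exactly the union over $k$ of the conditions for the length-one paths $((q_k,p_k),(q_{k+1},p_{k+1}))$.

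First, specialize~\eqref{eq:extended_discrete_Hamilton_equations} to a discrete path of length $N=1$, say $((q_k,p_k),(q_{k+1},p_{k+1}))$. In that case the system~\eqref{eq:discrete_Hamilton_equations} is vacuous, since its index range $1,\ldots,N-1$ is empty, so every such path is a trajectory. Being extended then amounts precisely to the two boundary conditions
\begin{equation*}
  q_{k+1} = D_2H_d(q_k,p_{k+1}) - \phi_d^p(q_k,p_{k+1}), \qquad
  p_k = D_1H_d(q_k,p_{k+1}) - \phi_d^q(q_k,p_{k+1}).
\end{equation*}
These are exactly the $k$-th pair of equations in~\eqref{eq:extended_discrete_Hamilton_equations}.

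Second, use the preceding remark to conclude. It states that $(q_\cdot,p_\cdot)$ is an extended trajectory if and only if these pairs hold for all $k=0,\ldots,N-1$, and each pair involves only $(q_k,p_k)$ and $(q_{k+1},p_{k+1})$. Hence the length-$N$ system is the conjunction of the $N$ length-one systems, which gives both implications at once.

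The only point requiring care is the justification of the preceding remark itself, that is, the reindexing that merges~\eqref{eq:discrete_Hamilton_equations} with the two extension conditions. The first equations of~\eqref{eq:discrete_Hamilton_equations}, written for $k=1,\ldots,N-1$, become after the shift $k\mapsto k+1$ the equations for $k=0,\ldots,N-2$. Adding the condition on $q_N$ supplies the case $k=N-1$. Similarly, the second equations for $k=1,\ldots,N-1$, together with the condition on $p_0$, cover $k=0,\ldots,N-1$. I expect no real obstacle; this index check is the whole content.
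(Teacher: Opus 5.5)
Your proposal is correct and is essentially the ``straightforward verification'' the paper alludes to without writing out. Via \eqref{eq:extended_discrete_Hamilton_equations}, the length-$N$ conditions split into $N$ pairs, each involving only $(q_k,p_k)$ and $(q_{k+1},p_{k+1})$. Each pair is exactly the extended-trajectory condition for a length-one path, whose trajectory equations are vacuous.
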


Algebraic equations do not always have solutions and the equations of
motion of an FDHS~\eqref{eq:discrete_Hamilton_equations}
(or~\eqref{eq:extended_discrete_Hamilton_equations}) are no exception,
even for regular systems. The next result establishes that, given a
(length-$1$) extended trajectory of an FDHS, a flow for the system can
be defined near that trajectory.

\begin{proposition}
  \label{prop:existence_of_discrete_flow_near_extended_trajectory}
  Let $\calM_d\CE (Q,H_d,\phi_d)$ be a regular forced discrete
  Hamiltonian system and $(q_\cdot,p_\cdot) \CE ((q_0,p_0),(q_1,p_1))$
  be an extended trajectory of $\calM_d$. Then, there are open subsets
  $U,V\subset T^*Q$ and a diffeomorphism $F^{\calM_d}:U\rightarrow V$
  such that
  \begin{enumerate}
  \item\label{it:existence_of_discrete_flow_near_extended_trajectory-domain}
    $(q_0,p_0)\in U$, $(q_1,p_1)\in V$ and
    $F^{\calM_d}(q_0,p_0) = (q_1,p_1)$.
  \item\label{it:existence_of_discrete_flow_near_extended_trajectory-flow}
    For any $(q_0',p_0')\in U$, if
    $(q_1',p_1') \CE F^{\calM_d}(q_0',p_0')$, then
    $((q_0',p_0'),(q_1',p_1'))$ is an extended trajectory of
    $\calM_d$.
  \item
    \label{it:existence_of_discrete_flow_near_extended_trajectory-trajectory}
    Any extended trajectory $((q_0',p_0'),(q_1',p_1'))$ of $\calM_d$
    such that $(q_0',p_0')\in U$ and $(q_1',p_1')\in V$ satisfies
    $(q_1',p_1') = F^{\calM_d}(q_0',p_0')$.
  \end{enumerate}
\end{proposition}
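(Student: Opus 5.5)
The plan is to write the flow as the composition $F^{\calM_d} \CE \ff_{\phi_d}^+ H_d \circ (\ff_{\phi_d}^- H_d)^{-1}$, restricted to suitable neighborhoods. By \eqref{eq:extended_forced_discrete_Hamiton_equations-ff}, a length-$1$ path $((q_0,p_0),(q_1,p_1))$ is an extended trajectory exactly when $(q_0,p_0) = \ff_{\phi_d}^- H_d(q_0,p_1)$ and $(q_1,p_1) = \ff_{\phi_d}^+ H_d(q_0,p_1)$. The whole problem therefore reduces to the "mixed" point $(q_0,p_1)\in T^*Q$, which both Legendre transforms send to the two endpoints. Regularity means both transforms are local diffeomorphisms.

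The construction goes as follows. Put $z\CE (q_0,p_1)$. By regularity and the inverse function theorem, there is an open $W\ni z$ on which both $\ff_{\phi_d}^- H_d$ and $\ff_{\phi_d}^+ H_d$ restrict to diffeomorphisms onto open sets. I then set $U\CE \ff_{\phi_d}^- H_d(W)$, which contains $(q_0,p_0)$, and $V\CE \ff_{\phi_d}^+ H_d(W)$, which contains $(q_1,p_1)$. Define $F^{\calM_d}\CE \ff_{\phi_d}^+ H_d|_W\circ (\ff_{\phi_d}^- H_d|_W)^{-1}:U\to V$. This is a composition of diffeomorphisms, so it is a diffeomorphism, and it sends $(q_0,p_0)$ to $(q_1,p_1)$; this is item~\eqref{it:existence_of_discrete_flow_near_extended_trajectory-domain}.

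For item~\eqref{it:existence_of_discrete_flow_near_extended_trajectory-flow}, take $(q_0',p_0')\in U$ and let $w\CE (\ff_{\phi_d}^- H_d|_W)^{-1}(q_0',p_0')$. Since $\ff_{\phi_d}^-H_d$ preserves the $q$-component, $w=(q_0',\tilde p)$ for some $\tilde p$. Similarly, $(q_1',p_1')\CE\ff_{\phi_d}^+H_d(w)$ has second component $\tilde p$, so $\tilde p=p_1'$ and $w=(q_0',p_1')$. Both conditions in \eqref{eq:extended_forced_discrete_Hamiton_equations-ff} then hold, so the path is an extended trajectory.

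Item~\eqref{it:existence_of_discrete_flow_near_extended_trajectory-trajectory} is the main obstacle. Given an extended trajectory with $(q_0',p_0')\in U$ and $(q_1',p_1')\in V$, we know $(q_0',p_1')$ is mapped to the two endpoints, but we do not know that $(q_0',p_1')\in W$. Without this we cannot invoke injectivity of $\ff^-_{\phi_d}H_d|_W$. The mixed point uses $q$ from a point of $U$ and $p$ from a point of $V$, so I will shrink the sets to force it into $W$. Choose $W$ to be a product neighborhood $W=A\times B$ with $A\ni q_0$ and $B\ni p_1$ open; this is possible after shrinking. Since $\ff^\mp_{\phi_d}H_d$ preserve $q$ and $p$ respectively, every point of $U$ then has $q$-component in $A$ and every point of $V$ has $p$-component in $B$. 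Hence $(q_0',p_1')\in A\times B=W$. Then $(q_0',p_0')=\ff^-_{\phi_d}H_d(q_0',p_1')$ gives $(q_0',p_1')=(\ff^-_{\phi_d}H_d|_W)^{-1}(q_0',p_0')$, and applying $\ff^+_{\phi_d}H_d$ yields $(q_1',p_1')=F^{\calM_d}(q_0',p_0')$. Choosing the product neighborhood is the key point; the rest is bookkeeping.
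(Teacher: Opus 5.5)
Your proof is correct and follows the paper's construction: you define $F^{\calM_d}$ as $\ff^+_{\phi_d}H_d|_W\circ(\ff^-_{\phi_d}H_d|_W)^{-1}$ around the mixed point $(q_0,p_1)$, and your argument for item (2) matches the paper's. For item (3), the paper invokes injectivity of the restricted transforms and so tacitly assumes that $(q_0',p_1')\in W$; your choice of a product neighborhood $W=A\times B$ is a valid way to guarantee this, and it makes your argument slightly more complete than the paper's.
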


\begin{proof}
  Being $((q_0,p_0),(q_1,p_1))$ an extended trajectory of $\calM_d$,
  using~\eqref{eq:extended_forced_discrete_Hamiton_equations-ff} we
  have that $(q_0,p_0)=\ff_{\phi_d}^- H_d(q_0,p_1)$ and
  $(q_1,p_1)=\ff_{\phi_d}^+H_d(q_{0},p_1)$. As, by the
  regularity of $\calM_d$, both $\ff_{\phi_d}^\pm H_d$ are local
  diffeomorphisms, it is easy to see that there are open subsets
  $U,V,W\subset T^*Q$ such that $(q_0,p_0)\in U$, $(q_1,p_1)\in V$ and
  $(q_0,p_1)\in W$ and, also, $\ff_{\phi_d}^- H_d|_W^U$ and
  $\ff_{\phi_d}^+ H_d|_W^V$ are diffeomorphisms. Define
  \begin{equation}
    \label{eq:discrete_flow_in_terms_of_legendre_transforms}
    F^{\calM_d}:U\rightarrow V \stext{ by }
    F^{\calM_d} \CE  (\ff_{\phi_d}^+ H_d|_W^V)
    \circ (\ff_{\phi_d}^- H_d|_W^U)^{-1}.
  \end{equation}
  Then, as
  $F^{\calM_d}(q_0,p_0) = (\ff_{\phi_d}^+ H_d|_W^V) ((\ff_{\phi_d}^-
  H_d|_W^U)^{-1}(q_0,p_0)) = \ff_{\phi_d}^+ H_d(q_0,p_1) = (q_1,p_1)$,
  we see that
  point~\eqref{it:existence_of_discrete_flow_near_extended_trajectory-domain}
  is satisfied.

  For any $(q_0',p_0')\in U$, let
  \begin{equation*}
    (q_1',p_1') \CE  F^{\calM_d}(q_0',p_0') =
    (\ff_{\phi_d}^+ H_d|_W^V)((\ff_{\phi_d}^- H_d|_W^U)^{-1}(q_0',p_0')).
  \end{equation*}
  Then, by the explicit form of both $\ff_{\phi_d}^\pm H_d$ we see
  that $(\ff_{\phi_d}^- H_d|_W^U)^{-1}(q_0',p_0') =
  (q_0',p_1')$. Thus,
  \begin{gather*}
    (q_1',p_1') = \ff_{\phi_d}^+ H_d|_W^V(q_0',p_1') =
    \ff_{\phi_d}^+ H_d(q_0',p_1'),\\
    (q_0',p_0') = \ff_{\phi_d}^- H_d|_W^U(q_0',p_1') =
    \ff_{\phi_d}^- H_d(q_0',p_1'),
  \end{gather*}
  that, using~\eqref{eq:extended_forced_discrete_Hamiton_equations-ff},
  show that $((q_0',p_0'),(q_1',p_1'))$ is an extended trajectory of
  $\calM_d$. Hence,
  point~\eqref{it:existence_of_discrete_flow_near_extended_trajectory-flow}
  is valid.

  Last, if $((q_0',p_0'),(q_1',p_1'))$ is an extended trajectory of
  $\calM_d$ with $(q_0',p_0')\in U$ and $(q_1',p_1')\in V$,
  it
  satisfies~\eqref{eq:extended_forced_discrete_Hamiton_equations-ff},
  so that (using that the restriction of the Legendre transforms are
  diffeomorphisms)
  \begin{equation*}
    (q_1',p_1') = (\ff_{\phi_d}^+ H_d|_W^V)
    \circ (\ff_{\phi_d}^- H_d|_W^U)^{-1}(q_0',p_0')
    = F^{\calM_d}(q_0',p_0'),
  \end{equation*}
  proving the validity of
  point~\eqref{it:existence_of_discrete_flow_near_extended_trajectory-trajectory}.
\end{proof}

\begin{remark}
  \label{rem:discrete_flow_in_terms_of_forded_discrete_legendre_transforms}
  In the context of
  Proposition~\ref{prop:existence_of_discrete_flow_near_extended_trajectory},
  the regularity of $\calM_d$ ensures that both forced discrete
  Legendre transforms are locally invertible. The existence of a
  discrete trajectory ensures that the open sets where those
  transforms can be properly composed have nonempty intersection. In
  the proof of the Proposition we also obtain the explicit
  formula~\eqref{eq:discrete_flow_in_terms_of_legendre_transforms} for
  the discrete flow.
\end{remark}

We can depict the extended discrete trajectories as follows.
\begin{equation*}
  \xymatrixcolsep{1.5pc}\xymatrix{
    {} &
    {(q_0,p_1)} \ar@{|->}[dl]_{\ff_{\phi_d}^-H_d}
    \ar@{|->}[dr]^{\ff_{\phi_d}^+H_d}
    & {} & {(q_1,p_2)} \ar@{|->}[dl]_{\ff_{\phi_d}^-H_d} &
    {\cdots} &
    {(q_{N-1},p_N)} \ar@{|->}[dr]^{\ff_{\phi_d}^+H_d} &
    {}\\
    {(q_0,p_0)} \ar@{|->}[rr]_{F^{\calM_d}} & {} &
    {(q_1,p_1)} \ar@{|-}[r]
    & {} & {\cdots} & {} \ar@{->}[r] & {(p_N,q_N)}
  }
\end{equation*}


\section{Structural properties}
\label{sec:structural_properties}


\subsection{Relation with Lagrangian systems}
\label{sec:relation_with_lagrangian_systems}

It is well known that, under certain regularity conditions, there
exists a relation between the trajectories of (continuous) Lagrangian
and Hamiltonian systems. Specifically, given a Lagrangian system, the
Legendre transform can be used to construct a Hamiltonian system whose
trajectories are in correspondence with those of the original
Lagrangian system. We begin this section observing how this is
reflected in the discrete setting.

In this context, let us recall a few definitions. See Part 3
of~\cite{ar:marsden_west:2001:discrete_mechanics_and_variational_integrators}
and~\cite{ar:caruso_fernandez_tori_zuccalli:2023:lagrangian_reduction_of_forced_discrete_mechanical_systems}
for additional information.

\begin{definition}
  A \jdef{forced discrete Lagrangian system} (FDLS) is a triple
  $(Q,L_d,f_d)$ where $Q$ is a smooth manifold, the
  \jdef{configuration space}, $L_d : Q \times Q \lra \R$ is a smooth
  function, the \jdef{discrete Lagrangian} and
  $f_d \in \Omega^1(Q \times Q)$ is a differential $1$-form on
  $Q \times Q$, the \jdef{discrete force}.
\end{definition}

\begin{definition}
  A discrete curve $q_\cdot\CE (q_0,\ldots,q_N)$ is a
  \jdef{trajectory} of the FDLS $(Q,L_d,f_d)$ if it satisfies
  \begin{equation}\label{eq:forced-variational principle}
    \delta \left( \sum_{k=0}^{N-1} L_d(q_{k},q_{k+1})  \right) +
    \sum_{k=0}^{N-1} f_d(q_k,q_{k+1})(\delta q_k,\delta q_{k+1}) = 0,
  \end{equation}
  for all infinitesimal variations $\delta q_\cdot$ over $q_\cdot$
  with fixed endpoints, that is, $\delta q_0=0$ and $\delta q_N=0$.
\end{definition}

\begin{theorem}\label{th:forced_EL_equations}
  Let $(Q,L_d,f_d)$ be a FDLS. Then, a discrete curve
  $q_\cdot : \{ 0,\ldots,N \} \lra Q$ is a trajectory of $(Q,L_d,f_d)$
  if and only if the following algebraic identities are satisfied:
  \begin{equation}\label{eq:forced_EL_equations}
    D_2 L_d(q_{k-1},q_k) + D_1 L_d(q_k,q_{k+1}) + f_d^+(q_{k-1},q_k) +
    f_d^-(q_k,q_{k+1}) = 0 \in T_{q_k}^{*}Q
  \end{equation}
  for all $k = 1,\ldots,N-1$, known as \jdef{forced discrete
    Euler--Lagrange equations}.
\end{theorem}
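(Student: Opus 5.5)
We will compute the first variation of the discrete forced action directly and rearrange it by index, as in the proof of Proposition~\ref{prop:discrete_Hamilton_equations}. First we decompose the discrete force along the Cartesian product $Q\times Q$:
\begin{equation*}
  f_d(q_k,q_{k+1})(\delta q_k,\delta q_{k+1}) = f_d^-(q_k,q_{k+1})(\delta q_k) + f_d^+(q_k,q_{k+1})(\delta q_{k+1}),
\end{equation*}
with $f_d^-(q_k,q_{k+1})\in T^*_{q_k}Q$ and $f_d^+(q_k,q_{k+1})\in T^*_{q_{k+1}}Q$. Since the variation is finite dimensional, we also have $\delta L_d(q_k,q_{k+1}) = D_1L_d(q_k,q_{k+1})(\delta q_k) + D_2L_d(q_k,q_{k+1})(\delta q_{k+1})$.

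Next, we substitute these into the left-hand side of~\eqref{eq:forced-variational principle}. In the resulting sum over $k=0,\ldots,N-1$ we shift the index in the terms involving $\delta q_{k+1}$ (the terms with $D_2L_d$ and $f_d^+$). Collecting the coefficients of each $\delta q_k$ expresses the left-hand side as
\begin{equation*}
  \sum_{k=1}^{N-1}\big(D_2L_d(q_{k-1},q_k)+D_1L_d(q_k,q_{k+1})+f_d^+(q_{k-1},q_k)+f_d^-(q_k,q_{k+1})\big)(\delta q_k)
\end{equation*}
plus two boundary terms. The first boundary term is $(D_1L_d(q_0,q_1)+f_d^-(q_0,q_1))(\delta q_0)$ and the second is $(D_2L_d(q_{N-1},q_N)+f_d^+(q_{N-1},q_N))(\delta q_N)$. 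Both vanish under the fixed-endpoint condition $\delta q_0=\delta q_N=0$.

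The equivalence then follows.
\begin{enumerate}
\item If~\eqref{eq:forced_EL_equations} holds, every coefficient in the interior sum is zero, so the variation vanishes and $q_\cdot$ is a trajectory.
\item Conversely, suppose $q_\cdot$ is a trajectory. Since $\delta q_1,\ldots,\delta q_{N-1}$ can be chosen independently and arbitrarily in $T_{q_k}Q$, we may take a variation supported at a single index $k$ with arbitrary value there. This forces the covector coefficient of $\delta q_k$ to be zero in $T^*_{q_k}Q$, which is exactly~\eqref{eq:forced_EL_equations}.
\end{enumerate}

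There is no real obstacle in this argument. The only point needing care is the index shift and the bookkeeping of which covector lives in which cotangent fiber. That is, we must check that $D_2L_d(q_{k-1},q_k)$ and $f_d^+(q_{k-1},q_k)$ indeed lie in $T^*_{q_k}Q$. This holds because on a general manifold $Q$ we have $T^*_{(a,b)}(Q\times Q)\simeq T^*_aQ\oplus T^*_bQ$.
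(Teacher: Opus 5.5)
Your proof is correct: the index shift, the two boundary terms killed by $\delta q_0=\delta q_N=0$, and the single-index variations for the converse are exactly the standard argument. The paper itself states this theorem without proof, recalling it from Marsden--West and the authors' earlier work, but your computation is the one used there and mirrors the paper's own proof of Proposition~\ref{prop:discrete_Hamilton_equations}.
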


Notice that in~\eqref{eq:forced_EL_equations} we are taking advantage
of the Cartesian product structure of $Q\times Q$ to decompose
$f_d = f_d^- + f_d^+$.

\begin{definition}\label{def:forced_discrete_lergende_transforms-lagrangian}
  Given a FDLS $\calM_d\CE (Q,L_d,f_d)$, the \jdef{forced discrete
    Legendre transforms} are the maps
  $\ff^+_{f_d}L_{d}: Q \times Q \lra T^*Q$ and
  $\ff^-_{f_d}L_{d} : Q \times Q \lra T^*Q$ defined by
  \begin{equation*}
    \begin{split}
      \ff^+_{f_d}L_{d}(q_0,q_1) &\CE  D_2 L_d(q_0,q_1) +
                                  f_d^+(q_0,q_1) \in T_{q_1}^*Q, \\
      \ff^-_{f_d}L_{d}(q_0,q_1) &\CE  -D_1 L_d(q_0,q_1) -
                                  f_d^-(q_0,q_1) \in T_{q_0}^*Q.
    \end{split}
  \end{equation*}
  When $\ff^+_{f_d}L_{d}$ and $\ff^-_{f_d}L_{d}$ are local
  diffeomorphisms, $\calM_d$ is said to be \jdef{regular} and, if they
  are (global) diffeomorphisms, $\calM_d$ is said to be \jdef{hyperregular}.
\end{definition}

Notice that the forced discrete Euler--Lagrange
equations~\eqref{eq:forced_EL_equations} may be written as
\begin{equation}\label{eq:forced_EL_equations-ff}
  \ff^+_{f_d}L_{d}(q_{k-1},q_{k}) = \ff^-_{f_d}L_{d}(q_{k},q_{k+1})
  \stext{ for } k=1,...,N-1.
\end{equation}

In what follows, we will use the second components of both transforms,
so it will be useful to define the functions
\begin{equation*}
  \begin{split}
    \conj{\ff^+_{f_d}L_{d}}(q_0,q_1) &\CE  (\pr_2 \circ \ff^+_{f_d}L_{d})(q_0,q_1) \in Q^* \\
    \conj{\ff^-_{f_d}L_{d}}(q_0,q_1) &\CE  (\pr_2 \circ \ff^-_{f_d}L_{d})(q_0,q_1) \in Q^*,
  \end{split}
\end{equation*}
where $\pr_2 : Q \times Q^* \lra Q^*$ is the projection onto the second factor.

Let us consider the function
$\widetilde{\ff}_{f_d}L_d : Q \times Q \lra Q \times Q^*$ given by
\begin{equation*}
  \widetilde{\ff}_{f_d} L_d(q,q') \CE
  \left(q,\conj{\ff_{f_d}^+ L_d}(q,q')\right).
\end{equation*}

\begin{definition}
  We say that a FDLS $(Q,L_d,f_d)$ satisfies the \jdef{modified
    regularity condition} (MRC) if $\widetilde{\ff}_{f_d} L_d$ is a
  local diffeomorphism. If $\widetilde{\ff}_{f_d} L_d$ is a (global)
  diffeomorphism, then we say that it satisfies the \jdef{modified
    hyperregularity condition} (MHC).
\end{definition}

It is easy to find examples that show that the regularity of a FDLS
does not guarantee the satisfaction of the MRC.

Let $(Q,L_d,f_d)$ be a FDLS that satisfies the MHC. We define
$H_d : T^*Q \lra \R$ by
\begin{equation}\label{eq:H_d_from_L_d}
  H_d(q,p) \CE  p q^+ - L_d(q,q^+), \stext{where $q^+ \in Q$ satisfies}
  p = \conj{\ff_{f_d}^+ L_d}(q,q^+).
\end{equation}

Since $\widetilde{\ff}_{f_d} L_d$ is a diffeomorphism, $q^+ \CE  (\pr_2 \circ (\widetilde{\ff}_{f_d} L_d)^{-1})(q,p)$, so that $H_d$ is well defined. In other words, we have a function $q^+ : Q \times Q^* \lra Q$ that satisfies $(\widetilde{\ff}_{f_d} L_d)^{-1} (q,p) = (q,q^+(q,p))$.

The other ingredient required to construct an FDHS associated to
$(Q,L_d,f_d)$ is a $1$-form on $T^*Q$. With this in mind, we consider
\begin{equation}\label{eq:phi_d_from_f_d}
  \phi_d \CE  ((\widetilde{\ff}_{f_d} L_d)^{-1})^* f_d.    
\end{equation}

Unraveling the definitions, we obtain
\begin{equation}\label{eq:phi_d_from_f_d-explicit}
  \begin{split}
    \phi_d^q(q,p)(\delta q) &= \left( f_d^-(q,q^+(q,p)) +
      f_d^+(q,q^+(q,p)) D_1 q^+ (q,p) \right) (\delta q) \\
    \phi_d^p(q,p)(\delta p) &= f_d^+(q,q^+(q,p)) D_2q^+(q,p)(\delta p).
  \end{split}
\end{equation}

In summary, we have constructed an FDHS $(Q,H_d,\phi_d)$ from a FDLS
$(Q,L_d,f_d)$ that satisfies the MHC.

Next, we study the relation between the trajectories of both
systems. Given a discrete curve $q_\cdot \CE (q_0,\ldots,q_N)$ in $Q$,
we define a discrete curve in $T^*Q$ by
\begin{equation}\label{eq:curve_in_T^*Q_from_curve_in_Q}
  (q_k,p_k) \CE
  \begin{cases}
    \ff_{f_d}^+ L_d (q_{k-1},q_k), \stext{ if } k = 1,\ldots,N,\\    
    \ff_{\phi_d}^-H_d(q_0,p_1), \stext{ if } k=0.
  \end{cases}
\end{equation}

\begin{lemma}\label{le:curves_in_Q_and_in_T^*Q}
  Let $(Q,L_d,f_d)$ be a FDLS that satisfies the MHC and let
  $(Q,H_d,\phi_d)$ be the FDHS constructed by~\eqref{eq:H_d_from_L_d}
  and~\eqref{eq:phi_d_from_f_d}. Then,
  \begin{enumerate}
  \item\label{it:curves_in_Q_and_in_T^*Q-1} for every discrete curve
    $q_\cdot = (q_0,\ldots,q_N)$ in $Q$, the discrete curve
    $(q_\cdot,p_\cdot) = ((q_0,p_0),\ldots,(q_N,p_N))$ in $T^*Q$
    constructed via~\eqref{eq:curve_in_T^*Q_from_curve_in_Q} satisfies
    the first equation
    in~\eqref{eq:extended_discrete_Hamilton_equations} as well as the
    $k=0$ case of the second.

  \item\label{it:curves_in_Q_and_in_T^*Q-2} Conversely, let
    $(q_\cdot,p_\cdot) \CE ((q_0,p_0),\ldots,(q_N,p_N))$ be a discrete
    curve in $T^*Q$ that satisfies the first equation
    in~\eqref{eq:extended_discrete_Hamilton_equations} as well as the
    $k=0$ case of the second. Then, $(q_\cdot,p_\cdot)$ is constructed
    from $q_\cdot \CE (q_0,\ldots,q_N)$
    by~\eqref{eq:curve_in_T^*Q_from_curve_in_Q}.
  \end{enumerate}
\end{lemma}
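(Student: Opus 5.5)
The proof rests on one identity: for every $(q,p)\in T^*Q$,
\begin{equation*}
  D_2H_d(q,p) - \phi_d^p(q,p) = q^+(q,p).
\end{equation*}
In words, the first component of $\ff_{\phi_d}^+H_d(q,p)$ is exactly the point $q^+$ used in~\eqref{eq:H_d_from_L_d}. To prove it, differentiate $H_d(q,p)=p\,q^+(q,p)-L_d(q,q^+(q,p))$ with respect to $p$:
\begin{equation*}
  D_2H_d(q,p)(\delta p) = \delta p\,(q^+) + \big(p - D_2L_d(q,q^+)\big) D_2q^+(q,p)(\delta p).
\end{equation*}
By the defining relation $p=\conj{\ff_{f_d}^+L_d}(q,q^+)=D_2L_d(q,q^+)+f_d^+(q,q^+)$, this becomes
\begin{equation*}
  D_2H_d(q,p)(\delta p) = \delta p\,(q^+) + f_d^+(q,q^+)\,D_2q^+(q,p)(\delta p).
\end{equation*}
Comparing with the second line of~\eqref{eq:phi_d_from_f_d-explicit}, the force term is exactly $\phi_d^p(q,p)(\delta p)$. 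Identifying $Q^{**}\simeq Q$ then gives the identity. This computation is the main step; everything else is bookkeeping with the MHC.

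For item~\eqref{it:curves_in_Q_and_in_T^*Q-1}, let $q_\cdot$ be given and define $p_\cdot$ by~\eqref{eq:curve_in_T^*Q_from_curve_in_Q}. Then $p_{k+1}=\conj{\ff_{f_d}^+L_d}(q_k,q_{k+1})$ for $k=0,\ldots,N-1$, which says $\widetilde{\ff}_{f_d}L_d(q_k,q_{k+1})=(q_k,p_{k+1})$. By the MHC this means $q^+(q_k,p_{k+1})=q_{k+1}$. The identity above then yields $q_{k+1}=D_2H_d(q_k,p_{k+1})-\phi_d^p(q_k,p_{k+1})$, which is the first equation of~\eqref{eq:extended_discrete_Hamilton_equations}. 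The $k=0$ case of the second equation, $p_0=D_1H_d(q_0,p_1)-\phi_d^q(q_0,p_1)$, is just the $k=0$ clause of~\eqref{eq:curve_in_T^*Q_from_curve_in_Q}, since $\ff_{\phi_d}^-H_d(q_0,p_1)=(q_0,D_1H_d(q_0,p_1)-\phi_d^q(q_0,p_1))$.

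For item~\eqref{it:curves_in_Q_and_in_T^*Q-2}, suppose $(q_\cdot,p_\cdot)$ satisfies those equations. The first equation and the identity give $q_{k+1}=q^+(q_k,p_{k+1})$. Since $(\widetilde{\ff}_{f_d}L_d)^{-1}(q,p)=(q,q^+(q,p))$, it follows that $\widetilde{\ff}_{f_d}L_d(q_k,q_{k+1})=(q_k,p_{k+1})$, i.e.\ $p_{k+1}=\conj{\ff_{f_d}^+L_d}(q_k,q_{k+1})$. As $\ff_{f_d}^+L_d(q_k,q_{k+1})$ has base point $q_{k+1}$, this reads $(q_{k+1},p_{k+1})=\ff_{f_d}^+L_d(q_k,q_{k+1})$ for $k=0,\ldots,N-1$. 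The $k=0$ second equation gives $(q_0,p_0)=\ff_{\phi_d}^-H_d(q_0,p_1)$. Together these are exactly~\eqref{eq:curve_in_T^*Q_from_curve_in_Q}.
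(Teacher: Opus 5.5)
Your proposal is correct and follows essentially the same route as the paper. Both rest on the identity $D_2H_d(q,p)=q^+(q,p)+\phi_d^p(q,p)$, which you derive explicitly where the paper only says ``computing''. Combined with the MHC (so that $q^+(q_k,p_{k+1})=q_{k+1}$ exactly when $p_{k+1}=\overline{\ff^+_{f_d}L_d}(q_k,q_{k+1})$), this identity gives both directions, and in each direction the $k=0$ case of the second equation coincides by definition with the $k=0$ clause of the construction.
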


\begin{proof}
  Computing, using~\eqref{eq:H_d_from_L_d}
  and~\eqref{eq:phi_d_from_f_d-explicit}, we have
  \begin{equation}\label{eq:D2Hd}
    \begin{split}
      D_2H_d(q,p) &= D_2 (p q^+(q,p) - L_d(q,q^+(q,p))) \\
                  &= q^+(q,p) + \phi_d^p(q,p).
    \end{split}
  \end{equation}

  \begin{enumerate}
  \item For every $k = 0,\ldots,N-1$, taking $q \CE q_{k}$ and
    $p \CE p_{k+1} \CE \conj{\ff_{f_d}^+ L_d}(q_{k},q_{k+1})$, we
    have $q^+(q,p) = q^+(q_{k},p_{k+1}) = q_{k+1}$. Then,
    \begin{equation*}
      D_2H_d(q_{k},p_{k+1}) \stackrel{\eqref{eq:D2Hd}}{=}
      q^+(q_k,p_{k+1}) + \phi_d^p(q_k,p_{k+1}) = q_{k+1} + \phi_d^p(q_k,p_{k+1}),
    \end{equation*}
    which is equivalent to the first equation
    in~\eqref{eq:extended_discrete_Hamilton_equations}.  For $k=0$,
    comparison of~\eqref{eq:curve_in_T^*Q_from_curve_in_Q} with the
    second equation of~\eqref{eq:extended_discrete_Hamilton_equations}
    shows that the conditions are equivalent.

  \item For every $k = 0,\ldots,N-1$, taking $q \CE q_k$ and
    $p \CE p_{k+1}$, the first equation
    in~\eqref{eq:extended_discrete_Hamilton_equations} says that
    \begin{equation*}
      q^+(q_k,p_{k+1}) \stackrel{\eqref{eq:D2Hd}}{=}
      D_2H_d(q_k,p_{k+1}) - \phi_d^p(q_k,p_{k+1})
      \stackrel{\eqref{eq:extended_discrete_Hamilton_equations}}{=} q_{k+1}.
    \end{equation*}

    Hence, by the definition of $q^+(q_k,p_{k+1})$,
    \begin{equation*}
      p_{k+1} = \conj{\ff_{f_d}^+ L_d}(q_k,q^+(q_k,p_{k+1})) =
      \conj{\ff_{f_d}^+ L_d}(q_k,q_{k+1}),
    \end{equation*}
    so that
    \begin{equation*}
      \ff_{f_d}^+ L_d (q_k,q_{k+1}) =
      \left( q_{k+1} , \conj{\ff_{f_d}^+ L_d}(q_k,q_{k+1})\right) =
      (q_{k+1}, p_{k+1}),
    \end{equation*}
    proving that the case $k=1,\ldots,N$
    of~\eqref{eq:curve_in_T^*Q_from_curve_in_Q} holds.  The case $k=0$
    of~\eqref{eq:curve_in_T^*Q_from_curve_in_Q} is equivalent to the
    $k=0$ case of the second equation
    of~\eqref{eq:extended_discrete_Hamilton_equations}, completing the
    proof.
  \end{enumerate}
\end{proof}

\begin{proposition}
  Let $\calM_d\CE (Q,L_d,f_d)$ be a FDLS that satisfies MHC.  Given
  discrete curves $q_\cdot\CE (q_0,\ldots,q_N)$ and
  $(q_\cdot,p_\cdot)\CE ((q_0,p_0),\ldots,(q_N,p_N))$ related as
  in~\eqref{eq:curve_in_T^*Q_from_curve_in_Q}, $q_\cdot$ is a
  trajectory of $\calM_d$ if and only if $(q_\cdot,p_\cdot)$ is an
  extended trajectory of the FDHS $(Q,H_d,\phi_d)$ constructed
  using~\eqref{eq:H_d_from_L_d} and~\eqref{eq:phi_d_from_f_d} from
  $\calM_d$.
\end{proposition}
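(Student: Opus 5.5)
By Lemma~\ref{le:curves_in_Q_and_in_T^*Q}, a curve $(q_\cdot,p_\cdot)$ built from $q_\cdot$ via~\eqref{eq:curve_in_T^*Q_from_curve_in_Q} automatically satisfies the first equation in~\eqref{eq:extended_discrete_Hamilton_equations} for $k=0,\ldots,N-1$ and the $k=0$ case of the second. So $(q_\cdot,p_\cdot)$ is an extended trajectory if and only if the second equation holds for $k=1,\ldots,N-1$, namely
\begin{equation*}
  p_k = D_1H_d(q_k,p_{k+1}) - \phi_d^q(q_k,p_{k+1}).
\end{equation*}
The plan is to show that, for curves related by~\eqref{eq:curve_in_T^*Q_from_curve_in_Q}, this identity is exactly the $k$-th forced discrete Euler--Lagrange equation~\eqref{eq:forced_EL_equations}. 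Theorem~\ref{th:forced_EL_equations} then gives the equivalence.

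The key computation is $D_1H_d$, done in parallel with~\eqref{eq:D2Hd}. Differentiating $H_d(q,p)=p\,q^+(q,p)-L_d(q,q^+(q,p))$ in $q$ gives
\begin{equation*}
  D_1H_d(q,p) = p\,D_1q^+ - D_1L_d(q,q^+) - D_2L_d(q,q^+)D_1q^+ .
\end{equation*}
By the defining relation $p=\conj{\ff_{f_d}^+L_d}(q,q^+)=D_2L_d(q,q^+)+f_d^+(q,q^+)$, this becomes
\begin{equation*}
  D_1H_d(q,p) = -D_1L_d(q,q^+) + f_d^+(q,q^+)D_1q^+ .
\end{equation*}
Subtracting $\phi_d^q$ from~\eqref{eq:phi_d_from_f_d-explicit} cancels the $f_d^+D_1q^+$ terms, leaving
\begin{equation*}
  D_1H_d(q,p)-\phi_d^q(q,p) = -D_1L_d(q,q^+(q,p)) - f_d^-(q,q^+(q,p)) = \conj{\ff^-_{f_d}L_d}(q,q^+(q,p)).
\end{equation*}

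Now set $q=q_k$ and $p=p_{k+1}=\conj{\ff^+_{f_d}L_d}(q_k,q_{k+1})$. As in the proof of the lemma, $q^+(q_k,p_{k+1})=q_{k+1}$, so the right-hand side equals $\conj{\ff^-_{f_d}L_d}(q_k,q_{k+1})$. The left-hand side of the second equation is $p_k=\conj{\ff^+_{f_d}L_d}(q_{k-1},q_k)$ for $k\ge1$. The second Hamilton equation for $k=1,\ldots,N-1$ is therefore equivalent to
\begin{equation*}
  \conj{\ff^+_{f_d}L_d}(q_{k-1},q_k)=\conj{\ff^-_{f_d}L_d}(q_k,q_{k+1}).
\end{equation*}
Since both sides of this identity lie in $T^*_{q_k}Q$ (their base points agree), it is~\eqref{eq:forced_EL_equations-ff}, which is~\eqref{eq:forced_EL_equations}.

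The main obstacle is bookkeeping rather than conceptual. One point is the careful application of the chain rule, using the identity $p=D_2L_d+f_d^+$ to cancel the $D_1q^+$ terms. The other is tracking the index ranges: the $k=0$ case of the second equation and all of the first equation are already handled by Lemma~\ref{le:curves_in_Q_and_in_T^*Q}, while the remaining range $k=1,\ldots,N-1$ matches the Euler--Lagrange range exactly.
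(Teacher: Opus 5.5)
Your proof is correct and follows the same route as the paper. You reduce via Lemma~\ref{le:curves_in_Q_and_in_T^*Q} to the second equation for $k=1,\ldots,N-1$, establish $D_1H_d(q,p)-\phi_d^q(q,p)=\conj{\ff^-_{f_d}L_d}(q,q^+(q,p))$, and evaluate it at $(q_k,p_{k+1})$ to recover~\eqref{eq:forced_EL_equations-ff}. Your explicit chain-rule computation, which cancels the $D_1q^+$ terms using $p=D_2L_d+f_d^+$, spells out a step the paper leaves implicit.
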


\begin{proof}
  Since $q_\cdot$ is a discrete curve in $Q$ and $(q_\cdot,p_\cdot)$
  is a discrete curve in $T^*Q$ that are related
  by~\eqref{eq:curve_in_T^*Q_from_curve_in_Q}, due to
  Lemma~\ref{le:curves_in_Q_and_in_T^*Q}, we only have to prove that
  $q_\cdot$ satisfies~\eqref{eq:forced_EL_equations-ff} if and only if
  $(q_\cdot,p_\cdot)$ satisfies the second equation
  in~\eqref{eq:extended_discrete_Hamilton_equations}, for
  $k=1,\ldots,N-1$.
  
  Using~\eqref{eq:H_d_from_L_d} and,
  then,~\eqref{eq:phi_d_from_f_d-explicit}, for arbitrary $(q,p)$, we
  obtain
  \begin{equation*}
    D_1H_d(q,p) = \phi_d^q(q,p) + \conj{\ff_{f_d}^- L_d}(q,q^+(q,p)).
  \end{equation*}
	
  For any $q_\cdot$ and $(q_\cdot,p_\cdot)$ related
  by~\eqref{eq:curve_in_T^*Q_from_curve_in_Q}, we evaluate the
  previous identity at $q \CE q_k$ and
  $p \CE p_{k+1} \CE \conj{\ff_{f_d}^+ L_d}(q_k,q_{k+1})$ with
  $k=0,\ldots,N-1$:
  \begin{equation}\label{eq:D1Hd en qk-1 pk}
    \begin{split}
      D_1H_d(q_k,p_{k+1}) &= \phi_d^q(q_k,p_{k+1}) +
                            \conj{\ff_{f_d}^- L_d}(q_k,q^+(q_k,p_{k+1})) \\
                          &= \phi_d^q(q_k,p_{k+1}) +
                            \conj{\ff_{f_d}^- L_d}(q_k,q_{k+1}).
    \end{split}
  \end{equation}
	
  If $q_\cdot$ is a trajectory of $(Q,L_d,f_d)$, then it
  satisfies~\eqref{eq:forced_EL_equations-ff}, which, used
  in~\eqref{eq:D1Hd en qk-1 pk} for $k=1,\ldots,N-1$ and
  taking~\eqref{eq:curve_in_T^*Q_from_curve_in_Q} into account, says
  that $(q_\cdot,p_\cdot)$ satisfies the second equation
  in~\eqref{eq:extended_discrete_Hamilton_equations} for $k$ in that
  range.

  Conversely, if $(q_\cdot,p_\cdot)$ is an extended trajectory of
  $(Q,H_d,\phi_d)$, then it satisfies the second equation
  in~\eqref{eq:extended_discrete_Hamilton_equations},
  and~\eqref{eq:D1Hd en qk-1 pk} says that
  \begin{equation*}
    p_k = \conj{\ff_{f_d}^- L_d}(q_k,q_{k+1}).
  \end{equation*}
  Using now the case $k=1,\ldots,N-1$
  of~\eqref{eq:curve_in_T^*Q_from_curve_in_Q} we see
  that~\eqref{eq:forced_EL_equations-ff} is satisfied and $q_\cdot$ is
  a trajectory of $(Q,L_d,f_d)$.
\end{proof}


\subsection{The canonical symplectic structure}
\label{sec:the_canonical_symplectic_structure}

We now study the evolution of the canonical symplectic structure of
$T^*Q$ by the flow of an FDHS. In
\cite{ar:leok_zhang:2011:discrete_hamiltonian_variational_integrators},
the authors study this for unforced systems, where one expects this
structure to be conserved.

Given an FDHS $(Q,H_d,\phi_d)$, let us define the $2$-forms
$\omega_{\phi_d}^\pm \CE (\ff_{\phi_d}^\pm H_d)^* \omega_Q$, where
$\omega_Q$ is the canonical $2$-form on $T^*Q$. In coordinates, using
the decomposition~\eqref{eq:phi_d_qp_decomposition},
\begin{equation*}
  \phi_d = \phi_d^q  + \phi_d^p  = \phi_{d,j}^q \ dq^j +
  \phi_d^{p,j} \ dp_j
\end{equation*}
we have
\begin{equation*}
  (\ff_{\phi_d}^+ H_d)^*(q^i \ dp_i) =
  \left( \frac{\partial H_d}{\partial p_i} - \phi_d^{p,i} \right) \ dp_i
\end{equation*}
and, therefore,
\begin{equation*}
  \begin{split}
    (\ff_{\phi_d}^+ H_d)^*(\omega_Q) &= \frac{\partial^2 H_d}{\partial q^j \partial p_i} \ dq^j \wedge dp_i + \frac{\partial^2 H_d}{\partial p_j \partial p_i} \ dp_j \wedge dp_i - d\phi_d^{p,i} \wedge dp_i \\
                                     &= \frac{\partial^2 H_d}{\partial q^j \partial p_i} \ dq^j \wedge dp_i - d\phi_d^{p,i} \wedge dp_i.
  \end{split}
\end{equation*}

A similar computation shows that
\begin{equation*}
  (\ff_{\phi_d}^- H_d)^*(\omega_Q) =
  \frac{\partial^2 H_d}{\partial q^j \partial p_i} \ dq^j \wedge dp_i +
  d\phi_{d,j}^q \wedge dq^j.
\end{equation*}

Defining
\begin{equation*}
  \omega_{H_d} \CE
  \frac{\partial^2 H_d}{\partial q^j \partial p_i} \ dq^j \wedge dp_i,
\end{equation*}
we have proved the following result:

\begin{proposition}
  If $(Q,H_d,\phi_d)$ is an FDHS, then
  \begin{equation*}
    \omega_{\phi_d}^+ = \omega_{H_d} - d\phi_d^p \wedge dp \stext{and}
    \omega_{\phi_d}^- = \omega_{H_d} + d\phi_d^q \wedge dq.
  \end{equation*}
\end{proposition}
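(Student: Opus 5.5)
The proof is a direct pullback computation in the canonical coordinates $(q^i,p_i)$ of $T^*Q\simeq Q\times Q^*$. I would write $\omega_Q = dq^i\wedge dp_i = -d\theta_Q$, where $\theta_Q = p_i\,dq^i$. Since $dq^i\wedge dp_i = d(q^i\,dp_i)$, I can equally take the primitive $q^i\,dp_i$. Pullback commutes with $d$, so $\omega^\pm_{\phi_d} = d\big((\ff^\pm_{\phi_d}H_d)^*(q^i\,dp_i)\big)$. For $\ff^-_{\phi_d}H_d$ it is more convenient to use $\omega_Q = -d(p_i\,dq^i)$ instead. Each Legendre transform is the identity on one factor, so exactly one of these two primitives pulls back trivially.

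For the $+$ case, $\ff^+_{\phi_d}H_d$ preserves $p$ and replaces $q^i$ by $\partial H_d/\partial p_i - \phi_d^{p,i}$. Hence
\begin{equation*}
  (\ff^+_{\phi_d}H_d)^*(q^i\,dp_i) = \Big(\frac{\partial H_d}{\partial p_i}-\phi_d^{p,i}\Big)dp_i .
\end{equation*}
Applying $d$ gives the term $\frac{\partial^2H_d}{\partial q^j\partial p_i}dq^j\wedge dp_i$, together with a term $\frac{\partial^2H_d}{\partial p_j\partial p_i}dp_j\wedge dp_i$ and the term $-d\phi_d^{p,i}\wedge dp_i$. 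The second term vanishes because the Hessian block is symmetric while the wedge product is antisymmetric. What remains is $\omega_{H_d} - d\phi_d^p\wedge dp$, where $d\phi_d^p\wedge dp$ is shorthand for $d\phi_d^{p,i}\wedge dp_i$.

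For the $-$ case, $\ff^-_{\phi_d}H_d$ preserves $q$ and replaces $p_i$ by $\partial H_d/\partial q^i - \phi^q_{d,i}$. Then
\begin{equation*}
  \omega^-_{\phi_d} = -d\Big(\Big(\frac{\partial H_d}{\partial q^i}-\phi^q_{d,i}\Big)dq^i\Big)
  = -\frac{\partial^2H_d}{\partial p_j\partial q^i}dp_j\wedge dq^i - \frac{\partial^2H_d}{\partial q^j\partial q^i}dq^j\wedge dq^i + d\phi^q_{d,i}\wedge dq^i .
\end{equation*}
The $qq$ term vanishes by symmetry. Swapping the wedge factors in the first term turns it into $\frac{\partial^2H_d}{\partial q^i\partial p_j}dq^i\wedge dp_j = \omega_{H_d}$. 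This gives $\omega_{H_d} + d\phi_d^q\wedge dq$.

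The only delicate point is bookkeeping: choosing the right primitive of $\omega_Q$ for each transform, and tracking signs when the wedge factors are reordered. There is no analytic obstacle, since the statement is an identity of smooth forms.
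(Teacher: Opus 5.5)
Your proof is correct and is essentially the paper's own argument: pull back the primitive $q^i\,dp_i$ under $\ff^+_{\phi_d}H_d$, apply $d$, and discard the symmetric $pp$ block. The paper leaves the $-$ case as ``a similar computation''; you carry it out explicitly with the primitive $-p_i\,dq^i$, and your signs check out.
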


\begin{corollary}\label{cor:omega_difference}
  If $(Q,H_d,\phi_d)$ is an FDHS, then
  \begin{equation*}
    \omega_{\phi_d}^+ - \omega_{\phi_d}^- = -d\phi_d.
  \end{equation*}
\end{corollary}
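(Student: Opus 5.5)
The plan is to subtract the two identities of the preceding Proposition. This gives
\begin{equation*}
  \omega_{\phi_d}^+ - \omega_{\phi_d}^- = -d\phi_d^p\wedge dp - d\phi_d^q\wedge dq ,
\end{equation*}
where, in the coordinates used above, $d\phi_d^p\wedge dp$ stands for $d\phi_d^{p,i}\wedge dp_i$ and $d\phi_d^q\wedge dq$ for $d\phi_{d,j}^q\wedge dq^j$. The $\omega_{H_d}$ terms cancel, so the whole argument reduces to identifying this expression with $-d\phi_d$.

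To that end, I would compute $d\phi_d$ directly from the coordinate expression $\phi_d = \phi_{d,j}^q\, dq^j + \phi_d^{p,i}\, dp_i$. Since $d(dq^j)=0$ and $d(dp_i)=0$, the Leibniz rule gives
\begin{equation*}
  d\phi_d = d\phi_{d,j}^q\wedge dq^j + d\phi_d^{p,i}\wedge dp_i .
\end{equation*}
Comparing this with the difference above yields $\omega_{\phi_d}^+ - \omega_{\phi_d}^- = -d\phi_d$.

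The only point needing care, and the main potential obstacle, is bookkeeping of signs and of the meaning of the shorthand in the preceding Proposition. Specifically, I would check that $-d\phi_d^{p,i}\wedge dp_i$ in $\omega_{\phi_d}^+$ and $+d\phi_{d,j}^q\wedge dq^j$ in $\omega_{\phi_d}^-$ arise with the sign convention $\omega_Q = dq^i\wedge dp_i$, which is the one implicit in the pullback of $q^i\,dp_i$ computed above. With that convention, the difference of the two formulas produces exactly the two pieces of $-d\phi_d$.
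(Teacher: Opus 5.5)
Your proposal is correct and is the argument the paper intends: the corollary follows by subtracting the two formulas of the preceding Proposition, the $\omega_{H_d}$ terms cancel, and the remainder $-d\phi_{d,j}^q\wedge dq^j - d\phi_d^{p,i}\wedge dp_i$ is exactly $-d\phi_d$ by the Leibniz rule. Your sign convention $\omega_Q = dq^i\wedge dp_i$ is also the right one, since the paper obtains $\omega_{\phi_d}^\pm$ by pulling back $q^i\,dp_i$ and then applying $d$.
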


Let $\calM_d\CE (Q,H_d,\phi_d)$ be a regular FDHS. We are interested
in the evolution of $\omega_Q$ by the discrete flow
$(q_k,p_k)\mapsto (q_{k+1},p_{k+1})$ of the system. If we denote this
flow by $\FlowD{\calM_d}$,
recalling~\eqref{eq:forced_discrete_Hamiton_equations-ff}
and~\eqref{eq:extended_forced_discrete_Hamiton_equations-ff}, we can
express
$\FlowD{\calM_d} = \left( \ff_{\phi_d}^+ H_d \right) \circ \left(
  \ff_{\phi_d}^- H_d \right)^{-1}$ and, using the previous
computations, derive the following result.

\begin{proposition}\label{prop:evolution_of_omega_Q_with_forces}
  Let $(Q,H_d,\phi_d)$ be a regular FDHS. Then, the evolution of the
  canonical symplectic form $\omega_Q$ is given by
  \begin{equation*}\label{eq:evolution_of_omega_Q_with_forces}
    {\FlowD{\calM_d}}^* (\omega_Q) = \omega_Q -
    \left( \left( \ff_{\phi_d}^- H_d \right)^{-1} \right)^*(d\phi_d).
  \end{equation*}
\end{proposition}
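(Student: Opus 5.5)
The plan is to work on the open set $U$ where the flow is defined. By Proposition~\ref{prop:existence_of_discrete_flow_near_extended_trajectory} and formula~\eqref{eq:discrete_flow_in_terms_of_legendre_transforms}, there we may write $\FlowD{\calM_d} = (\ff_{\phi_d}^+ H_d|_W^V)\circ (\ff_{\phi_d}^- H_d|_W^U)^{-1}$. Pullback is contravariant, so I will first compute
\begin{equation*}
  {\FlowD{\calM_d}}^*(\omega_Q) = \left(\left(\ff_{\phi_d}^- H_d\right)^{-1}\right)^* \left(\ff_{\phi_d}^+ H_d\right)^*(\omega_Q) = \left(\left(\ff_{\phi_d}^- H_d\right)^{-1}\right)^* \omega_{\phi_d}^+ .
\end{equation*}
Here every map is understood as restricted to the appropriate open sets $U$, $V$, $W$ from the proof of Proposition~\ref{prop:existence_of_discrete_flow_near_extended_trajectory}.

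Next I will use Corollary~\ref{cor:omega_difference} to replace $\omega_{\phi_d}^+$ by $\omega_{\phi_d}^- - d\phi_d$. By linearity of pullback this gives
\begin{equation*}
  {\FlowD{\calM_d}}^*(\omega_Q) = \left(\left(\ff_{\phi_d}^- H_d\right)^{-1}\right)^* \omega_{\phi_d}^- - \left(\left(\ff_{\phi_d}^- H_d\right)^{-1}\right)^*(d\phi_d).
\end{equation*}
For the first term I recall that $\omega_{\phi_d}^- = (\ff_{\phi_d}^- H_d)^*\omega_Q$. Its pullback by the local inverse is therefore $\big(\ff_{\phi_d}^- H_d\circ (\ff_{\phi_d}^- H_d)^{-1}\big)^*\omega_Q = \omega_Q$, which yields the claimed formula.

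There is no genuine analytic obstacle; the proof is bookkeeping. The one point needing care is that $\ff_{\phi_d}^\pm H_d$ are only local diffeomorphisms. So the identity must be read on the domain $U$ of the local flow, with $(\ff_{\phi_d}^- H_d)^{-1}$ meaning the inverse of the restriction to $W$. Once the domains are fixed, the composition identities for pullbacks hold verbatim.
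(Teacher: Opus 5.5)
Your proof is correct and follows the paper's own argument: write $\FlowD{\calM_d} = \ff_{\phi_d}^+ H_d\circ(\ff_{\phi_d}^- H_d)^{-1}$, use contravariance of pullback, and apply Corollary~\ref{cor:omega_difference}, which is exactly what the paper means by ``using the previous computations''. Your remark that the identity must be read on the domain $U$ of the local flow, with the inverse taken for the restriction to $W$, is a sensible precision that the paper leaves implicit.
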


\begin{remark}
  Notice that if $\phi_d$ is closed (in particular, if
  $\phi_d \equiv 0$), we have that $\omega_Q$ is preserved by the flow
  of the system.
\end{remark}

\begin{example}
  \label{ex:damped_harmonic_oscillator-evolution_of_symplectic_structure}
  A discretization of a damped harmonic oscillator is given by the
  FDHS $(Q,H_d,\phi_d)$ where $Q\CE \R$,
  \begin{equation*}
    H_d(q,p) \CE  pq + h \frac{p^2}{2m} + \frac{h}{2} \nu q^2
    \stext{ and }
    \phi_d(q,p) \CE  -h \kappa \frac{p}{m} \ dq,
  \end{equation*}
  where $m$, $\nu$ and $\kappa$ are constants and $h\neq 0$ is a fixed
  time-step.

  The forced discrete Legendre transforms are
  \begin{equation*}
    (\ff_{\phi_d}^+ H_d)(q,p) = \left( q + h \frac{p}{m} , p \right),
    \stext{ and }
    (\ff_{\phi_d}^- H_d)(q,p) = \left( q ,
      p + h \nu q + h \kappa \frac{p}{m} \right).
  \end{equation*}

  If $1 + \frac{h\kappa}{m} \neq 0$, we have
  \begin{equation*}
    (\ff_{\phi_d}^- H_d)^{-1} (q',p') =
    \left( q' , \frac{p' - h \nu q'}{1 + \frac{h\kappa}{m}} \right) =
    \left( q' , \frac{m}{m + h\kappa} (p' - h \nu q') \right).
  \end{equation*}

  We see that
  \begin{equation*}
    \begin{split}
      \left( \left( \ff_{\phi_d}^- H_d \right)^{-1} \right)^* (d\phi_d) &=\frac{h\kappa}{m} \ dq' \wedge \left( \frac{m}{m + h\kappa} (dp' - h \nu dq') \right) \\
                                                                        &= \frac{h\kappa}{m + h\kappa} (dq' \wedge dp') = \frac{h\kappa}{m + h\kappa} \omega_Q,
    \end{split}
  \end{equation*}
  and, then, according to
  Proposition~\ref{prop:evolution_of_omega_Q_with_forces},
  \begin{equation*} {\FlowD{\calM_d}}^* (\omega_Q) = \omega_Q -
    \frac{h\kappa}{m + h\kappa} \omega_Q = \left( 1 - \frac{h\kappa}{m +
        h\kappa} \right) \ \omega_Q.
  \end{equation*}

  Notice that if $\kappa = 0$ (i.e., the force vanishes), the
  canonical symplectic structure is preserved by the flow.
\end{example}


\subsection{The canonical momentum map}
\label{sec:the_canonical_momentum_map}

Let $\calM_d\CE (Q,H_d,\phi_d)$ be an FDHS and let $G$ be a Lie group
acting on the left on $Q$, by a free and proper action $l^Q$. Let
$l^{Q^*}$ be the induced action on $Q^*$. Following the ideas for the
unforced case found in \cite[Section
5]{ar:leok_zhang:2011:discrete_hamiltonian_variational_integrators},
$G$ is said to be a \jdef{symmetry group} for the system
$(Q,H_d,\phi_d)$ if the function $R_d : Q \times Q \times Q^* \lra \R$
given by
\begin{equation*}
  R_d(q_0,q_1,p_1) \CE  p_1 q_1 - H_d(q_0,p_1)
\end{equation*}
is $G$-invariant for the action
$l_g^{Q \times Q \times Q^*}(q_0,q_1,p_1) \CE
(l_g^Q(q_0),l_g^Q(q_1),l_g^{Q^*}(p_1))$.

Let $J : T^*Q \lra \frakg^*$ be the canonical momentum map, defined by
$J(q,p)\xi \CE p(\xi_Q(q))$ for any $\xi \in \frakg$. Given an
extended trajectory $((q_0,p_0),(q_1,p_1))$ of $\calM_d$ and
$\varepsilon > 0$, let
$q_i^\varepsilon \CE l^Q_{\exp(\varepsilon \xi)}(q_i)$ and
$p_i^\varepsilon \CE l^{Q^*}_{\exp(\varepsilon \xi)}(p_i)$, with
$i = 0,1$. Since $R_d$ is $G$-invariant, a computation similar to that
of~\cite{ar:leok_zhang:2011:discrete_hamiltonian_variational_integrators}
yields
\begin{equation*}
  \begin{split}
    0 & = \frac{d}{d\varepsilon} \left( p_1^\varepsilon q_1^\varepsilon - H_d(q_0^\varepsilon,p_1^\varepsilon) \right) \bigg|_{\varepsilon=0} \\	
      & = p_1 \left. \frac{d}{d\varepsilon} q_1^\varepsilon \right|_{\varepsilon=0} +  q_1 \left. \frac{d}{d\varepsilon} p_1^\varepsilon \right|_{\varepsilon=0} - D_1H_d(q_0,p_1) \left. \frac{d}{d\varepsilon} q_0^\varepsilon \right|_{\varepsilon=0} - D_2H_d(q_0,p_1) \left. \frac{d}{d\varepsilon} p_1^\varepsilon \right|_{\varepsilon=0} \\
      & = p_1 \left. \frac{d}{d\varepsilon} q_1^\varepsilon \right|_{\varepsilon=0} +  q_1 \left. \frac{d}{d\varepsilon} p_1^\varepsilon \right|_{\varepsilon=0} - \left(p_0 + \phi^q_d(q_0,p_1) \right) \left. \frac{d}{d\varepsilon} q_0^\varepsilon \right|_{\varepsilon=0} - \left(q_1 + \phi^p_d(q_0,p_1) \right) \left. \frac{d}{d\varepsilon} p_1^\varepsilon \right|_{\varepsilon=0} \\
      & = p_1 \left. \frac{d}{d\varepsilon} q_1^\varepsilon \right|_{\varepsilon=0} - p_0 \left. \frac{d}{d\varepsilon} q_0^\varepsilon \right|_{\varepsilon=0} - \phi^q_d(q_0,p_1)  \left. \frac{d}{d\varepsilon} q_0^\varepsilon \right|_{\varepsilon=0} -  \phi^p_d(q_0,p_1) \left. \frac{d}{d\varepsilon} p_1^\varepsilon \right|_{\varepsilon=0} \\
      & = p_1 \xi_Q(q_1) - p_0 \xi_Q(q_0) - \phi_d(q_0,p_1)  \xi_{Q \times Q^*}(q_0,p_1).
\end{split}
\end{equation*}

Rewriting the previous identity in terms of the canonical momentum map
$J$, we have the following result.

\begin{proposition}\label{prop:momentum_map_evolution}
  Let $\calM_d\CE (Q,H_d,\phi_d)$ be an FDHS and let $G$ be a symmetry
  group of the system. Then, the canonical momentum map
  $J : T^*Q \lra \R$ evolves according to
  \begin{equation*}
    J(q_1,p_1) \xi = J(q_0,p_0) \xi +
    \phi_d(q_0,p_1)  \left( \xi_{Q \times Q^*}(q_0,p_1) \right),
  \end{equation*}
  where $((q_0,p_0),(q_1,p_1))$ is an extended trajectory of $\calM_d$.

  In particular, if
  $\phi_d(q_0,p_{1})(\xi_{Q \times Q^*}(q_0,p_{1})) = 0$, the
  canonical momentum map is preserved along the extended trajectories
  of the system.
\end{proposition}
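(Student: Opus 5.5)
The plan is to differentiate the $G$-invariance of $R_d$ along the one-parameter subgroup $\exp(\varepsilon\xi)$, exactly as in the computation preceding the statement, and then substitute the extended discrete Hamilton equations~\eqref{eq:extended_discrete_Hamilton_equations}.

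Fix $\xi\in\frakg$ and an extended trajectory $((q_0,p_0),(q_1,p_1))$ of $\calM_d$. By Lemma~\ref{le:discrete_extended_trajectories_N_and_1}, this single step is all we need: the statement for longer extended trajectories follows by applying it to each consecutive pair. Since $R_d$ is $G$-invariant, the function
\begin{equation*}
  \varepsilon\mapsto p_1^\varepsilon q_1^\varepsilon - H_d(q_0^\varepsilon,p_1^\varepsilon)
\end{equation*}
is constant, so its derivative at $\varepsilon=0$ vanishes. Expanding by the chain rule gives four terms. They involve $\frac{d}{d\varepsilon}q_i^\varepsilon|_{0}=\xi_Q(q_i)$ and $\frac{d}{d\varepsilon}p_1^\varepsilon|_0$, which is the $Q^*$-component of the infinitesimal generator of the cotangent-lifted action.

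Next I substitute the $k=0$ case of~\eqref{eq:extended_discrete_Hamilton_equations}:
\begin{equation*}
  D_1H_d(q_0,p_1)=p_0+\phi_d^q(q_0,p_1), \qquad D_2H_d(q_0,p_1)=q_1+\phi_d^p(q_0,p_1).
\end{equation*}
The terms $q_1\,\frac{d}{d\varepsilon}p_1^\varepsilon$ cancel. The force terms combine, via~\eqref{eq:phi_d_qp_decomposition}, into $\phi_d(q_0,p_1)$ evaluated on the vector $(\xi_Q(q_0),\frac{d}{d\varepsilon}p_1^\varepsilon|_0)$. Here $\xi_{Q\times Q^*}$ denotes the generator of the product action of $G$ on $Q\times Q^*$, which is $l^Q$ on the first factor and $l^{Q^*}$ on the second. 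This vector is therefore exactly $\xi_{Q\times Q^*}(q_0,p_1)$.

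What remains is
\begin{equation*}
  0=p_1\xi_Q(q_1)-p_0\xi_Q(q_0)-\phi_d(q_0,p_1)\bigl(\xi_{Q\times Q^*}(q_0,p_1)\bigr).
\end{equation*}
Recalling $J(q,p)\xi=p(\xi_Q(q))$, this rearranges to the stated evolution formula. The "in particular" clause is then immediate: if the force term vanishes, $J(q_1,p_1)\xi=J(q_0,p_0)\xi$ at every step, so $J$ is preserved along extended trajectories.

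The only delicate point is the identification of the generator. One must check that the point $(q_0,p_1)$, which mixes indices, is acted on by the diagonal product action, so that the vector fed to $\phi_d$ really is $\xi_{Q\times Q^*}(q_0,p_1)$. This holds directly from the definition of $l^{Q\times Q\times Q^*}$. Everything else is the routine computation already displayed before the statement.
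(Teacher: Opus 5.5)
Your proposal is correct and follows the same route as the paper. Both differentiate the $G$-invariance of $R_d$ along $\exp(\varepsilon\xi)$, substitute the $k=0$ case of the extended discrete Hamilton equations, cancel the $q_1\frac{d}{d\varepsilon}p_1^\varepsilon$ terms, and regroup the force terms into $\phi_d(q_0,p_1)\bigl(\xi_{Q\times Q^*}(q_0,p_1)\bigr)$; your appeal to Lemma~\ref{le:discrete_extended_trajectories_N_and_1} for longer extended trajectories is a harmless addition.
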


\begin{example}\label{example:unit_mass_particle_with_radial_potential}
  Consider a unit mass particle moving in the plane with radial
  potential and friction-type forcing (as seen in~\cite[Example
  2.3]{ar:caruso_fernandez_tori_zuccalli:2023:lagrangian_reduction_of_forced_discrete_mechanical_systems}
  and, originally, in~\cite[Example
  3.2.3]{ar:marsden_west:2001:discrete_mechanics_and_variational_integrators})
  from the Hamiltonian point of view. This leads to the forced
  Hamiltonian system $(Q,H,\phi)$ that, in polar coordinates, is given
  by $Q\CE \R^+ \times S^1$,
  \begin{equation*}
    \begin{split}
      H(r,\theta,p^r,p^\theta) \CE& \frac{1}{2} \left( (p^r)^2 + \frac{(p^\theta)^2}{r^2} \right) + r^2 (r^2 - 1)^2,\\
      \phi(r,\theta,p^r,p^\theta) \CE& -\mu \left( p^r dr + p^\theta d\theta \right),
    \end{split}
  \end{equation*}
  where $\mu$ is the constant coefficient of friction. For convenience
  of computation, we consider, instead, its lift to its covering
  space, $Q\CE \R^+\times\R$.

  A possible discretization is the FDHS $(Q,H_d,\phi_d)$ given by
  \begin{equation*}
    \begin{split}
      H_d(r,\eta,p^r,p^\eta) &= p^r r + p^\eta \eta + \frac{h}{2} \left( (p^r)^2 + \frac{(p^\eta)^2}{r^2} \right) + h r^2 (r^2 - 1)^2, \\
      \phi_d(r,\eta,p^r,p^\eta) &= - h \mu \left[ p^r dr + p^\eta d\eta \right],
    \end{split}
  \end{equation*}
  where $h\neq 0$ is a constant.

  The rotational invariance of the continuous system becomes a
  translational invariance in the $\eta$ variable for the lifted
  system. The Lie group $\R$ is a symmetry group of the system, since
  \begin{equation*}
    \begin{split}
      R_d(q_0,q_1,p_1) &\CE  \langle p_1,q_1 \rangle - H_d(q_0,p_1) \\
                       &= p^r_1 (r_1 - r_0) + p^\eta_1 (\eta_1 - \eta_0) -
                         \frac{h}{2} \left( (p^r_1)^2 +
                         \frac{(p^\eta_1)^2}{r_0^2} \right) -
                         h r_0^2 (r_0^2 - 1)^2
    \end{split}
  \end{equation*}
  is invariant by the action
  \begin{equation*}
    l_g^{Q \times Q \times Q^*}(r_0,\eta_0,r_1,\eta_1,p^r_1,p^\eta_1) \CE
    (r_0, \eta_0 + g, r_1, \eta_1 + g, p^r_1, p^\eta_1), \quad g \in \R.
  \end{equation*}

  For $\xi \in \R=\operatorname{Lie}(\R)$, the infinitesimal generator
  for the action is
  \begin{equation*}
    \xi_{Q \times Q^*}(q_0,p_1) =
    \dert l^{Q \times Q^*}_{t\xi}(r_0,\eta_0,p^r_1,p^\eta_1) = (0,\xi,0,0).
  \end{equation*}
  Therefore,
  \begin{equation*}
    \phi_d(q_0,p_1) \left( \xi_{Q \times Q^*}(q_0,p_1) \right) =
    - h \mu \left( p^r_1 \cdot 0 + p^\eta_1 \xi \right) = - h \mu p^\eta_1 \xi
  \end{equation*}
  and Proposition \ref{prop:momentum_map_evolution} yields
  \begin{equation*}
    J(q_1,p_1) = J(q_0,p_0) - h \mu p^\eta_1.    
  \end{equation*}
\end{example}


\section{Error analysis: discrete exact systems}
\label{sec:error_analysis-discrete_exact_systems}

So far, we have considered forced discrete Hamiltonian systems as
discrete-time dynamical systems, mostly independent of their, more
common, continuous-time counterparts. In this section we want to see
how, given a (continuous) forced Hamiltonian system $\calM$, there is
a one parameter family of FDHSs $\calM_{d,h}^E$ ---known as the
\jdef{discrete exact} systems---, whose trajectories (for a given $h$)
interpolate the trajectories of $\calM$ at discrete time steps
(usually $h$).

\begin{example}\label{ex:exact_discrete_hamiltonian_system}
  Let $\calM\CE (Q,H,\phi)$ be a forced Hamiltonian system. For any $h\neq 0$
  we define $H_{d,h}^E:T^*Q\rightarrow \R$ by
  \begin{equation}\label{eq:exact_discrete_hamiltonian_system-II-hamiltonian}
    H_{d,h}^E(q_0,p_1) \CE  p(h) q(h) -
    \int_0^h (p(t) \dot{q}(t) -H(q(t),p(t))) dt,
  \end{equation}
  where $(q(t),p(t)) \CE \FlowCBV{\calM}{h}{t}(q_0,p_1)$ is the
  trajectory of $\calM$ that satisfies the boundary conditions
  $q(0)=q_0$ and $p(h)=p_1$. Similarly, we define
  $\phi_{d,h}^E\in \Omega^1(T^*Q)$ by
  \begin{equation}\label{eq:exact_discrete_hamiltonian_system-II-force}
    \phi_{d,h}^E(q_0,p_1)(\delta q_0,\delta p_1) \CE 
    \int_0^h \phi(q(t),p(t))T_{(q_0,p_1)}
    \FlowCBV{\calM}{h}{t}(\delta q_0,\delta p_1) dt.
  \end{equation}
  Then, $\calM_{d,h}^E\CE (Q,H_{d,h}^E,\phi_{d,h}^E)$ is a discrete
  Hamiltonian system that we call the \jdef{exact forced discrete
    Hamiltonian system} associated to $\calM$. This notion can be
  extended to the case $h=0$ with
  \begin{equation}\label{eq:exact_discrete_hamiltonian_system-II-h=0}
    H_{d,0}^E(q_0,p_1) \CE  p_1q_0 \stext{ and }
    \phi_{d,0}^E(q_0,p_1) \CE 0.
  \end{equation}
\end{example}

It is not clear that the family of systems $\calM_{d,h}^E$ described
in Example~\ref{ex:exact_discrete_hamiltonian_system} is well defined
because it relies on the existence of the flow associated to boundary
value
problems. Proposition~\ref{prop:existence_of_exact_discrete_hamiltonian_system-local}
shows that, under certain conditions, $\calM_{d,h}^E$ is well defined.

Let $\calM\CE (Q,H,\phi)$ be a forced Hamiltonian system and
$(q_0,p_1)\in T^*Q = Q\times Q^*$. Then, by
Proposition~\ref{prop:flow_of_hamilton_equations-b}, there exist
constants $r,r'>0$ and $T_-<T_+$ such that $0\in (T_-,T_+)$ and, for
any $T\in [T_-,T_+]$ and
$(q_0',p_1') \in \conj{B^\infty_{r'}(q_0,p_1)} \subset Q\times
Q^*$, the boundary value
problem~\eqref{eq:flow_of_hamilton_equations-b-II-eqs} has a unique
solution $(q(t),p(t))$ that satisfies
$(q(0),p(0))\in B^\infty_r(q_0,p_1)$. Fix $T\in [T_-,T_+]$; given
$(q_0',p_1')\in B^\infty_{r'}(q_0,p_1)$, let $(q_T(t),p_T(t))$ be the
unique solution of~\eqref{eq:flow_of_hamilton_equations-b-II-eqs} such
that $(q_T(0),p_T(0))\in B^\infty_r(q_0,p_1)$. With this information
we define
\begin{equation}
  \label{eq:exact_discrete_hamiltonian_system-II-hamiltonian-local}
  H_{d,T}^E(q_0',p_1') \CE  p_T(T) q_T(T) -
  \int_0^T (p_T(t) \dot{q_T}(t) -H(q_T(t),p_T(t))) dt,
\end{equation}
and
\begin{equation}\label{eq:exact_discrete_hamiltonian_system-II-force-local}
  \phi_{d,T}^E(q_0',p_1')(\delta q_0',\delta p_1') \CE 
  \int_0^T \phi(q_T(t),p_T(t))T_{(q_0',p_1')}
  F^{\calM}_t(\delta q_0',\delta p_1') dt,
\end{equation}
over $B^\infty_{r'}(q_0,p_1)$.

\begin{proposition}
  \label{prop:existence_of_exact_discrete_hamiltonian_system-local}
  With the definitions as above, $H_{d,T}^E(q_0',p_1')$ and
  $\phi_{d,T}^E(q_0',p_1')$ given
  by~\eqref{eq:exact_discrete_hamiltonian_system-II-hamiltonian-local}
  and~\eqref{eq:exact_discrete_hamiltonian_system-II-force-local} are
  smooth as functions of $T\in (T_-,T_+)$ and
  $(q_0',p_1')\in B^\infty_{r'}(q_0,p_1)$.
\end{proposition}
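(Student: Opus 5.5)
The plan is to reduce everything to the smoothness statement in Proposition~\ref{prop:flow_of_hamilton_equations-b}. Define
\begin{equation*}
  \Psi(t,T,q_0',p_1') \CE (q_T(t),p_T(t)),
\end{equation*}
the unique solution of~\eqref{eq:flow_of_hamilton_equations-b-II-eqs} with $(q_T(0),p_T(0))\in B^\infty_r(q_0,p_1)$. By Proposition~\ref{prop:flow_of_hamilton_equations-b}, $\Psi$ is a smooth function of $(t,T,q_0',p_1')$ simultaneously, on $(T_-,T_+)\times[T_-,T_+]\times B^\infty_{r'}(q_0,p_1)$. Two consequences follow at once. First, $\dot q_T(t)=\partial_t\Psi_q(t,T,q_0',p_1')$ is also jointly smooth. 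Second, so is the tangent map $T_{(q_0',p_1')}F^{\calM}_t$ appearing in~\eqref{eq:exact_discrete_hamiltonian_system-II-force-local}, since it is the partial derivative $D_{(q_0',p_1')}\Psi(t,T,\cdot)$. We read $F^\calM_t$ there as $\FlowCBV{\calM}{T}{t}$.

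For the Hamiltonian we use the differential equation to write
\begin{equation*}
  \dot q_T(t)=D_2H(\Psi(t,T,q_0',p_1')).
\end{equation*}
The integrand $g(t,T,q_0',p_1')\CE p_T(t)\dot q_T(t)-H(\Psi)$ is then a composition of smooth maps, hence smooth in all variables. The boundary term $p_T(T)q_T(T)=\Psi_p(T,T,\cdot)\,\Psi_q(T,T,\cdot)$ is the composition of $\Psi$ with the smooth diagonal map $(T,q_0',p_1')\mapsto(T,T,q_0',p_1')$ followed by the bilinear pairing, so it is smooth. For the integral $\int_0^T g(t,T,q_0',p_1')\,dt$ we change variables $t=sT$, obtaining
\begin{equation*}
  T\int_0^1 g(sT,T,q_0',p_1')\,ds .
\end{equation*}
This removes the parameter from the limits. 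Because $g$ is smooth on a neighborhood of the compact set $[0,1]\times$(small closed neighborhoods), differentiation under the integral sign applies to all orders, and the expression is smooth in $(T,q_0',p_1')$. This argument also covers $T\le 0$ and $T=0$ uniformly. Note that for $t=sT$ with $s\in[0,1]$ we have $t\in[\min(0,T),\max(0,T)]\subset(T_-,T_+)$, so $\Psi$ is defined there.

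The force is handled in the same way. For fixed $(\delta q_0',\delta p_1')$ the integrand
\begin{equation*}
  \check\phi(\Psi(t,T,\cdot))\bigl(D_{q_0'}\Psi_q\,\delta q_0'+D_{p_1'}\Psi_q\,\delta p_1'\bigr)
\end{equation*}
is smooth. Here we use that $\phi$ is horizontal, so only the $q$-component of the tangent map matters. The integrand is also linear in $(\delta q_0',\delta p_1')$. After the rescaling $t=sT$ we get smooth coefficient functions $\phi^{E,q}_{d,T,j}$ and $\phi^{E,p,j}_{d,T}$ by the same differentiation-under-the-integral argument. Smoothness of these component functions is exactly smoothness of the $1$-form.

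The main obstacle is the claim that $\Psi$ is smooth jointly in $t$ and the boundary parameter $T$, including its derivatives in the initial data. We take this claim from Proposition~\ref{prop:flow_of_hamilton_equations-b}. Behind it is an implicit function theorem argument: one solves $p(T;q_0',x)=p_1'$ for the unknown initial momentum $x$, using that at $T=0$ the derivative in $x$ is the identity. We will also check carefully that the open domain $B^\infty_{r'}$ sits inside the closed ball where uniqueness holds, so that the selected solution branch is the smooth one.
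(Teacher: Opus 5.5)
Your proposal is correct and takes essentially the same approach as the paper, whose proof is the one-line remark that the result follows readily from Proposition~\ref{prop:flow_of_hamilton_equations-b}. Your argument is a careful unpacking of that reduction: joint smoothness of the boundary-value flow and of its derivative in $(q_0',p_1')$, the rescaling $t=sT$ to take $T$ out of the integration limits, differentiation under the integral sign, and horizontality of $\phi$ for the force term.
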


\begin{proof}
  It follows readily from
  Proposition~\ref{prop:flow_of_hamilton_equations-b}.
\end{proof}

Next, we explain the ``exact'' part of the name of $\calM_{d,h}^E$
introduced in Example~\ref{ex:exact_discrete_hamiltonian_system}. We
first have a technical result involving forced discrete Legendre
transforms and, then, the real goal, relating discrete trajectories of
$\calM_{d,h}^E$ to the continuous ones of $\calM$.

\begin{lemma}\label{le:legendre_transforms_for_exact_system}
  Let $\calM\CE (Q,H,\phi)$ be a forced Hamiltonian system and
  $\calM_{d,h}^E\CE (Q,H_{d,h}^E,\phi_{d,h}^E)$ be the exact forced
  discrete Hamiltonian system associated to $\calM$
  (Example~\ref{ex:exact_discrete_hamiltonian_system}). Then,
  \begin{equation*}
    \ff^+_{\phi_{d,h}^E} H_{d,h}^E(q_0,p_1) = (q(h),p_1)\stext{ and }
    \ff^-_{\phi_{d,h}^E} H_{d,h}^E(q_0,p_1) = (q_0,p(0)) 
  \end{equation*}
  where $(q(t),p(t)) \CE \FlowCBV{\calM}{h}{t}(q_0,p_1)$ is the
  boundary value flow of $\calM$ corresponding to the boundary values
  $(q_0,p_1)$.
\end{lemma}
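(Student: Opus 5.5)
The plan is to differentiate $H_{d,h}^E(q_0,p_1)$ with respect to its arguments and compare with $\phi_{d,h}^E$. Fix $(q_0,p_1)$ and a variation $(\delta q_0,\delta p_1)$. Consider the family of curves $(q^s(t),p^s(t)) \CE \FlowCBV{\calM}{h}{t}(q_0+s\delta q_0,p_1+s\delta p_1)$, which is smooth in $(s,t)$ by Proposition~\ref{prop:flow_of_hamilton_equations-b}. Its derivative at $s=0$ is an infinitesimal variation $(\delta q(t),\delta p(t)) = T_{(q_0,p_1)}\FlowCBV{\calM}{h}{t}(\delta q_0,\delta p_1)$ over the trajectory $(q(t),p(t))$. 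The boundary conditions $q^s(0)=q_0+s\delta q_0$ and $p^s(h)=p_1+s\delta p_1$ give $\delta q(0)=\delta q_0$ and $\delta p(h)=\delta p_1$.

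Next I differentiate~\eqref{eq:exact_discrete_hamiltonian_system-II-hamiltonian} along $s$ and reuse the integration by parts identity from the proof of the type II proposition. That identity holds for arbitrary variations, not only those vanishing at the endpoints. It gives
\begin{multline*}
dH_{d,h}^E(q_0,p_1)(\delta q_0,\delta p_1) - \int_0^h \check{\phi}(q,p)(\delta q)\,dt
= q(h)\delta p(h) + p(0)\delta q(0) \\
- \int_0^h \big((-\dot p - D_1H + \check\phi)\delta q + (\dot q - D_2 H)\delta p\big)\,dt .
\end{multline*}
Since $(q,p)$ solves the forced Hamilton equations~\eqref{eq:Hamilton_equations}, the integrand vanishes. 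Because $\phi$ is horizontal, $\int_0^h\check\phi(q,p)(\delta q)\,dt = \phi_{d,h}^E(q_0,p_1)(\delta q_0,\delta p_1)$ by~\eqref{eq:exact_discrete_hamiltonian_system-II-force}. Therefore
\begin{equation*}
dH_{d,h}^E(q_0,p_1)(\delta q_0,\delta p_1) - \phi_{d,h}^E(q_0,p_1)(\delta q_0,\delta p_1) = p(0)\delta q_0 + q(h)\delta p_1 .
\end{equation*}

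I then split this according to~\eqref{eq:phi_d_qp_decomposition}. Taking $\delta p_1=0$ gives $D_1H_{d,h}^E - (\phi_{d,h}^E)^q = p(0)$. Taking $\delta q_0=0$ gives $D_2H_{d,h}^E - (\phi_{d,h}^E)^p = q(h)$. Substituting into the definitions of $\ff^\pm_{\phi_{d,h}^E}H_{d,h}^E$ yields exactly the stated formulas.

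The case $h=0$ is checked directly from~\eqref{eq:exact_discrete_hamiltonian_system-II-h=0}. There the transforms are $(q_0,p_1)$ for both signs, and the boundary value flow at $h=0$ is constant, so $q(0)=q_0$ and $p(0)=p_1$.

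The main obstacle is justifying the differentiation under the integral and the identification of the variation with $T\FlowCBV{\calM}{h}{t}$, including $\delta q(0)=\delta q_0$ and $\delta p(h)=\delta p_1$. I expect both to follow from the joint smoothness in Proposition~\ref{prop:flow_of_hamilton_equations-b}, working locally on the ball where the flow is defined.
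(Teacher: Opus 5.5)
Your proposal is correct and is essentially the paper's proof: you differentiate $H_{d,h}^E$ along the variation $T_{(q_0,p_1)}\FlowCBV{\calM}{h}{t}(\delta q_0,\delta p_1)$, integrate by parts, cancel the integral using the forced Hamilton equations, and identify the force term through the horizontality of $\phi$. The differences are cosmetic. You handle both components with one combined variation, whereas the paper computes the $q_0$-derivative and says the $p_1$ case is similar. You also add an explicit $h=0$ check, where the only fact needed is $(q(0),p(0))=(q_0,p_1)$, not that the flow is constant in $t$.
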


\begin{proof}
  Let $(q(t),p(t)) \CE \FlowCBV{\calM}{h}{t}(q_0,p_1)$ be as in the
  statement. Then,
  differentiating~\eqref{eq:exact_discrete_hamiltonian_system-II-hamiltonian-local}
  with respect to $q_0'$ and integrating by parts, we obtain
  \begin{equation*}
    \begin{split}
      D_1H_{d,h}^E(q_0,p_1) =& 
      p(0) - \int_0^h \bigg(\big( -\dot{p}(t)- D_1H(q(t),p(t))\big)
      \pd{q(t)}{q_0} \\& \phantom{p(0) - \int_0^h \bigg(} +
      \big(\dot{q}(t)- D_2H(q(t),p(t))\big) \pd{p(t)}{q_0} \bigg) dt.
    \end{split}
  \end{equation*}
  Also, for any $X_{q_0} \in T_{q_0}Q$,
  \begin{equation*}
    (\phi_{d,h}^E)^q(q_0,p_1)(X_{q_0}) = \int_0^h
    \check{\phi}(q(t),p(t)) \left(\pd{q(t)}{q_0} X_{q_0}\right) dt.
  \end{equation*}
  Thus,
  \begin{gather*}
    D_1H_{d,h}^E(q_0,p_1) - (\phi_{d,h}^E)^q(q_0,p_1) =
    p(0) - \int_0^h \bigg( \big(\dot{q}(t)- D_2H(q(t),p(t))\big) \pd{p(t)}{q_0}
    \\+ \big( -\dot{p}(t)- D_1H(q(t),p(t)) +\check{\phi}(q_0,p_1)\big) \pd{q(t)}{q_0} \bigg) dt =
    p(0),
  \end{gather*}
  where the integral vanished because $(q(t),p(t))$ is a trajectory of
  $\calM$ and, so, it satisfies~\eqref{eq:Hamilton_equations}.

  The second formula in the statement follows along similar lines.
\end{proof}

\begin{proposition}\label{prop:exact_trajectories_imp_disc_trajectories}
  Given a forced Hamiltonian system $\calM\CE (Q,H,\phi)$ and $h>0$,
  let $\calM_{d,h}^E\CE (Q,H_{d,h}^E,\phi_{d,h}^E)$ be the exact
  forced discrete Hamiltonian system constructed in
  Example~\ref{ex:exact_discrete_hamiltonian_system}.
  \begin{enumerate}
  \item
    \label{it:exact_trajectories_imp_disc_trajectories-II-cont_are_disc}
    In addition, let $(q(t),p(t))$ be a trajectory of $\calM$ defined,
    at least, for $t\in [0,2h]$.  Then,
    $((q(0),p(0)),(q(h),p(h)),(q(2h),p(2h)))$ is an extended
    trajectory of $\calM_{d,h}^E$.
  \item
    \label{it:exact_trajectories_imp_disc_trajectories-II-disc_are_cont}
    Conversely, if $((q_0,p_0),(q_1,p_1),(q_2,p_2))$ is an extended
    discrete trajectory of $\calM_{d,h}^E$, there is a trajectory
    $(q(t),p(t))$ of $\calM$ such that $(q_k,p_k) = (q(kh),p(kh))$,
    for $k=0,1,2$.
  \end{enumerate}
\end{proposition}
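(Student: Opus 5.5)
The plan is to reduce both statements to Lemma~\ref{le:legendre_transforms_for_exact_system}, using the characterization~\eqref{eq:extended_forced_discrete_Hamiton_equations-ff} and Lemma~\ref{le:discrete_extended_trajectories_N_and_1}. By the latter, a length-$2$ discrete curve is an extended trajectory if and only if each consecutive pair $((q_k,p_k),(q_{k+1},p_{k+1}))$, $k=0,1$, is a length-$1$ extended trajectory. By~\eqref{eq:extended_forced_discrete_Hamiton_equations-ff}, this means
\begin{equation*}
  (q_k,p_k) = \ff^-_{\phi_{d,h}^E} H_{d,h}^E(q_k,p_{k+1})
  \stext{and}
  (q_{k+1},p_{k+1}) = \ff^+_{\phi_{d,h}^E} H_{d,h}^E(q_k,p_{k+1}).
\end{equation*}
Lemma~\ref{le:legendre_transforms_for_exact_system} turns these conditions into statements about the boundary value flow. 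Concretely, if $(\tilde q(t),\tilde p(t)) \CE \FlowCBV{\calM}{h}{t}(q_k,p_{k+1})$, the conditions read $p_k=\tilde p(0)$ and $q_{k+1}=\tilde q(h)$.

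For part~\eqref{it:exact_trajectories_imp_disc_trajectories-II-cont_are_disc}, I take $k\in\{0,1\}$ and consider the time-shifted curve $t\mapsto (q(kh+t),p(kh+t))$ for $t\in[0,h]$. The forced Hamilton equations are autonomous, so this curve solves~\eqref{eq:Hamilton_equations} with boundary data $q(kh)$ at $t=0$ and $p((k+1)h)$ at $t=h$. By the uniqueness assumed in Remark~\ref{rem:bv_continuous_flows} (locally, Proposition~\ref{prop:flow_of_hamilton_equations-b}), it equals $\FlowCBV{\calM}{h}{t}(q(kh),p((k+1)h))$. Lemma~\ref{le:legendre_transforms_for_exact_system} then gives
\begin{equation*}
  \ff^-_{\phi_{d,h}^E} H_{d,h}^E(q(kh),p((k+1)h)) = (q(kh),p(kh))
\end{equation*}
and
\begin{equation*}
  \ff^+_{\phi_{d,h}^E} H_{d,h}^E(q(kh),p((k+1)h)) = (q((k+1)h),p((k+1)h)).
\end{equation*}
So each pair is a length-$1$ extended trajectory, and Lemma~\ref{le:discrete_extended_trajectories_N_and_1} finishes the argument.

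For part~\eqref{it:exact_trajectories_imp_disc_trajectories-II-disc_are_cont}, I define $(q^{(k)}(t),p^{(k)}(t)) \CE \FlowCBV{\calM}{h}{t}(q_k,p_{k+1})$ for $k=0,1$. The extended-trajectory conditions together with Lemma~\ref{le:legendre_transforms_for_exact_system} give
\begin{equation*}
  q^{(k)}(0)=q_k,\quad p^{(k)}(0)=p_k,\quad q^{(k)}(h)=q_{k+1},\quad p^{(k)}(h)=p_{k+1}.
\end{equation*}
Hence each segment passes through $(q_k,p_k)$ at $t=0$ and through $(q_{k+1},p_{k+1})$ at $t=h$. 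I would then let $(q(t),p(t))$ be the solution of the initial value problem for~\eqref{eq:Hamilton_equations} with $(q(0),p(0))=(q_0,p_0)$. By uniqueness for initial value problems, it coincides with $q^{(0)},p^{(0)}$ on $[0,h]$, so $(q(h),p(h))=(q_1,p_1)$. The curve $(q^{(1)}(t-h),p^{(1)}(t-h))$ is also a solution, and it takes the value $(q_1,p_1)$ at $t=h$. By IVP uniqueness and autonomy it coincides with $(q(t),p(t))$ on $[h,2h]$, which gives $(q(2h),p(2h))=(q_2,p_2)$. Equivalently, the two segments glue into a $C^1$ solution because their values match at $t=h$.

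The main obstacle is the well-definedness and uniqueness of the boundary value flow. Remark~\ref{rem:bv_continuous_flows} assumes it, and Proposition~\ref{prop:flow_of_hamilton_equations-b} gives it only locally, with a uniqueness clause restricted to a neighbourhood. In part~\eqref{it:exact_trajectories_imp_disc_trajectories-II-cont_are_disc} one must ensure that the shifted segment is the solution selected by $\FlowCBV{\calM}{h}{t}$. I would handle this by invoking the standing assumption of Remark~\ref{rem:bv_continuous_flows}, or, in the local setting, by taking $h$ small so that the segment's initial point lies in the relevant ball $B^\infty_r$. A secondary point is that the IVP solution in part~\eqref{it:exact_trajectories_imp_disc_trajectories-II-disc_are_cont} must exist on $[0,2h]$. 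This follows because it is constructed by continuation from the two segments.
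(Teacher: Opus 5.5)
Your proposal is correct and follows essentially the paper's proof. Both use Lemma~\ref{le:legendre_transforms_for_exact_system} to reduce everything to the boundary value flow $\FlowCBV{\calM}{h}{t}$: time-shifted pieces of the continuous trajectory are identified with it via the uniqueness assumed in Remark~\ref{rem:bv_continuous_flows}, and for the converse the two segments are glued at $t=h$ by IVP uniqueness. The only cosmetic difference is that you route through Lemma~\ref{le:discrete_extended_trajectories_N_and_1} (two length-$1$ extended trajectories), whereas the paper checks $\ff^+_{\phi_{d,h}^E} H_{d,h}^E(q_0,p_1)=\ff^-_{\phi_{d,h}^E} H_{d,h}^E(q_1,p_2)$ and the two endpoint conditions directly.
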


\begin{proof}
  Let $(q_k,p_k)\CE (q(kh),p(kh))$ for $k=0,1,2$, where $(q(t),p(t))$
  is a trajectory of $\calM$, as in the statement. According to
  Remark~\ref{rem:discrete_Hamilton_equations-ff}, in order to prove
  that $(q_\cdot,p_\cdot)$ is a trajectory of $\calM_{d,h}^E$ we have
  to check that
  \begin{equation*}
    \ff^+_{\phi_{d,h}^E} H_{d,h}^E(q_{0},p_1) =
    \ff^-_{\phi_{d,h}^E} H_{d,h}^E(q_1,p_{2}).
  \end{equation*}
  Using Lemma~\ref{le:legendre_transforms_for_exact_system}, this
  identity becomes
  \begin{equation}
    \label{eq:exact_trajectories_imp_disc_trajectories-II-cond_1}
    (\conj{q}(h),p_1) = (q_1,\conj{p}(0)),
  \end{equation}
  where $\conj{q}(t)$ is the first component of
  $\FlowCBV{\calM}{h}{t}(q_0,p_1)$, and $\conj{p}$ is the second
  component of $\FlowCBV{\calM}{h}{t}(q_1,p_2)$. By construction, we
  see that $\conj{q}(t) = q(t)$ and $\conj{p}(t)=p(t+h)$. Thus,
  condition~\eqref{eq:exact_trajectories_imp_disc_trajectories-II-cond_1}
  is satisfied and $(q_\cdot,p_\cdot)$ is a trajectory of
  $\calM_{d,h}^E$.

  Similarly, using
  Lemma~\ref{le:legendre_transforms_for_exact_system}, writing
  $\conj{p}(t)$ for the second component of
  $\FlowCBV{\calM}{h}{t}(q_0,p_1)$, we have
  $(q_0,p_0) = (q_0,\conj{p}(0)) = \ff^-_{\phi_{d,h}^E}
  H_{d,h}^E(q_0,p_1)$, where we have used that $\conj{p}(t) =
  p(t)$. Last, writing $\conj{q}(t)$ for the first component of
  $\FlowCBV{\calM}{h}{t}(q_1,p_2)$, we have
  $(q_2,p_2) = (\conj{q}(h),p_2) = \ff^+_{\phi_{d,h}^E}
  H_{d,h}^E(q_{1},p_2)$, where we have used that
  $\conj{q}(t)=q(t+h)$. So, we conclude that $(q_\cdot,p_\cdot)$ is an
  extended trajectory of $\calM_{d,h}^E$, proving
  point~\eqref{it:exact_trajectories_imp_disc_trajectories-II-cont_are_disc}
  of the statement.

  Conversely, given an extended trajectory
  $(q_\cdot,p_\cdot) \CE ((q_0,p_0),(q_1,p_1),(q_2,p_2))$ of
  $\calM_{d,h}^E$ we define
  \begin{equation*}
    (q_0(t),p_0(t)) \CE \FlowCBV{\calM}{h}{t}(q_0,p_1) \stext{ and }
    (q_1(t),p_1(t)) \CE \FlowCBV{\calM}{h}{t}(q_1,p_2)
  \end{equation*}
  and observe that, because of
  Lemma~\ref{le:legendre_transforms_for_exact_system} and the fact
  that $(q_\cdot,p_\cdot)$ is a trajectory of $\calM_{d,h}^E$,
  \begin{equation}\label{eq:exact_trajectories_imp_disc_trajectories-II-BCs}
    (q_0(h),p_0(h)) = \ff^+_{\phi_{d,h}^E} H_{d,h}^E(q_0,p_1) =
    \ff^-_{\phi_{d,h}^E} H_{d,h}^E(q_1,p_2) = (q_1(0),p_1(0)).
  \end{equation}
  Now, both $(q_0(t),p_0(t))$ and $(q_1(t),p_1(t))$ are solutions of
  the same system of first order ODEs, albeit with different initial
  conditions. From this
  perspective,~\eqref{eq:exact_trajectories_imp_disc_trajectories-II-BCs}
  says that
  \begin{equation*}
    (q(t),p(t)) \CE 
    \begin{cases}
      (q_0(t),p_0(t)) \stext{ if } 0\leq t \leq h,\\
      (q_1(t-h),p_1(t-h)) \stext{ if } h\leq t \leq 2h\\
    \end{cases}
  \end{equation*}
  is a solution of the same ODE ---hence a trajectory of $\calM$---,
  satisfying, for example, $q(0)=q_0$ and $p(h)=p_1$. Thus
  \begin{equation*}
    q_0=q(0),\quad q_1=q_1(0)=q(h),\quad p_1=p_0(h)=p(h),\stext{ and }
    p_2=p_1(h)=p(2h).
  \end{equation*}
  Last, as $(q_\cdot,p_\cdot)$ is an extended trajectory of
  $\calM_{d,h}^E$,
  using~\eqref{eq:extended_forced_discrete_Hamiton_equations-ff} and
  Lemma~\ref{le:legendre_transforms_for_exact_system},
  \begin{gather*}
    (q_0,p_0) = \ff^-_{\phi_{d,h}^E} H_{d,h}^E(q_0,p_1) =
    (q_0,p_0(0)) = (q(0),p(0)),\\
    (q_2,p_2) = \ff^+_{\phi_{d,h}^E} H_{d,h}^E(q_1,p_2) =
    (q_1(h),p_2) = (q(2h),p(2h)).
  \end{gather*}
  Thus, we also have $p_0 = p(0)$ and $q_2=q(2h)$ which, together with
  the previous computations concludes the proof of
  point~\eqref{it:exact_trajectories_imp_disc_trajectories-II-disc_are_cont}
  in the statement.
\end{proof}


\section{Error analysis: approximations}
\label{sec:error_analysis-approximations}

Given a (continuous) forced Hamiltonian system $\calM$, we saw in
Section~\ref{sec:error_analysis-discrete_exact_systems} that the
discrete exact system $\calM_{d,h}^E$ has the very attractive property
of having its trajectories be the trajectories of $\calM$, evaluated
at discrete times that are multiples of $h$.

Still, the family $\calM_{d,h}^E$ is almost never constructible in
practice. Thus, we consider approximations of $\calM_{d,h}^E$ by
families of FDHSs $\calM_{d,h}$ ---known as \jdef{discretizations} of
$\calM$--- that, ideally, also approximate the trajectories of
$\calM_{d,h}^E$ which, in turn, interpolate the trajectories of
$\calM$.


\subsection{Contact order of maps}
\label{sec:contact_order_of_maps}

Here we introduce the notion of contact order for families of maps
between (open subsets of) finite-dimensional real vector spaces. In
what follows, $Q$ and $Q'$ are two such spaces. For any map
$f:Q\times\R\rightarrow Q'$, we define
$\conj{f}:Q\times \R\rightarrow Q'\times \R$ by
$\conj{f}(q,h)\CE(f(q,h),h)$. In what follows, we will consider smooth
maps, with respect to the natural structures of smooth manifold on $Q$
and $Q'$. Also, $\calW,\calW_a\subset Q\times \R$ are open
neighborhoods of $Q\times \{0\}$.

\begin{definition}\label{def:contact_order_global}
  Let $f_a:\calW_a\rightarrow Q'$ be smooth maps for $a=1,2$. We say
  that $f_2=f_1+\calO(h^{r+1})$ for $r\in\N$ if there is an open
  subset $U\subset\calW_1\cap\calW_2$ containing $Q\times\{0\}$ and a
  continuous function $\delta f:U\rightarrow Q'$ such that
  \begin{equation}\label{eq:contact_order_global}
    f_2(q,h)-f_1(q,h) = h^{r+1} \delta f(q,h) \stext{ for all } (q,h)\in U.
  \end{equation}
  If $f_2=f_1+\calO(h^{r+1})$ it is said that $f_1$ and $f_2$ have
  \jdef{contact of order $r$}.
\end{definition}

\begin{remark}
  More general notions of contact order for maps of $C^k$ type and
  between smooth manifolds are also considered in the literature. See,
  for example,~\cite{ar:cuell_patrick:2007:skew_critical_problems}
  and~\cite{ar:fernandez_graiffZurita_grillo:2021:error_analysis_of_forced_discrete_mechanical_systems}. Still,
  Definition~\ref{def:contact_order_global} suffices for the current
  context.
\end{remark}

\begin{remark}
  Notice that the notion of contact order is not strict, in the sense
  that if the contact order between two maps is $r$ it could also be
  $r'$ for some $r'>r$.
\end{remark}

The following result provides a convenient characterization for the
contact order.

\begin{proposition}\label{prop:contact_order_and_derivatives}
  Let $f_a:\calW_a\rightarrow Q'$ be smooth maps for $a=1,2$ and
  $r\in\N$ constant. Then, the following assertions are equivalent.
  \begin{enumerate}
  \item\label{it:contact_order_and_derivatives-order}
    $f_2=f_1+\calO(h^{r+1})$.
  \item\label{it:contact_order_and_derivatives-derivatives} For each
    $q\in Q$ there is an open subset $V_q\subset Q$ containing $q$
    such that
    \begin{equation*}
      D_2^jf_2(q',h)|_{h=0} = D_2^jf_1(q',h)|_{h=0} \stext{ for all }
      j=0,\ldots,r \stext{ and } q'\in V_q.
    \end{equation*}
  \end{enumerate}
\end{proposition}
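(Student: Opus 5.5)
The plan is to work with the difference $g\CE f_2-f_1$, which is smooth on the open set $\calW_1\cap\calW_2\supset Q\times\{0\}$. Both conditions are statements about $g$ alone: (1) says $g=h^{r+1}\delta f$ near $Q\times\{0\}$ with $\delta f$ continuous, and (2) says $D_2^jg(q',0)=0$ for $j=0,\ldots,r$ and every $q'$. The local neighborhoods $V_q$ in (2) can always be taken equal to $Q$, since the condition is pointwise in $q'$.

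For (1)$\Rightarrow$(2), fix $q'\in Q$. Since $U$ is open and contains $(q',0)$, the function $h\mapsto g(q',h)=h^{r+1}\delta f(q',h)$ is defined for $|h|<\epsilon$. It is smooth in $h$, so Taylor's theorem gives $g(q',h)=\sum_{j=0}^{r}\frac{h^j}{j!}D_2^jg(q',0)+o(h^r)$. On the other hand, $h^{r+1}\delta f(q',h)=o(h^r)$ because $\delta f$ is continuous and hence bounded near $h=0$. Uniqueness of the Taylor polynomial then forces $D_2^jg(q',0)=0$ for $j\le r$. Taking $V_q\CE Q$, or any neighborhood of $q$, gives (2).

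For (2)$\Rightarrow$(1), I will use the integral form of Taylor's remainder, which gives a candidate $\delta f$ that is automatically continuous. The open set $\calW_1\cap\calW_2$ contains $Q\times\{0\}$, but it need not be star-shaped in $h$. So first let $U$ be the set of $(q,h)$ such that $(q,sh)\in\calW_1\cap\calW_2$ for all $s\in[0,1]$. Checking that $U$ is open is the only delicate point. By compactness of $[0,1]$ and a tube-lemma argument, each $(q,h)\in U$ has a product neighborhood $B\times(h-\eta,h+\eta)$ contained in $U$, using that each segment $\{q\}\times[0,h]$ is compact inside the open set. Clearly $U\supset Q\times\{0\}$.

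On $U$, (2) holds at every $q$, since each $q$ lies in its own $V_q$. Taylor's formula with integral remainder in the variable $h$ then gives
\begin{equation*}
  g(q,h)=\frac{h^{r+1}}{r!}\int_0^1(1-s)^{r}D_2^{r+1}g(q,sh)\,ds .
\end{equation*}
Define $\delta f(q,h)$ to be $\frac{1}{r!}$ times this integral. It is continuous, in fact smooth, by differentiation under the integral sign, since the integrand is smooth on a neighborhood of the compact parameter interval. This establishes (1). I expect the main (mild) obstacle to be the domain issue, namely ensuring the segment structure of $U$; the analytic content is just Taylor's theorem.
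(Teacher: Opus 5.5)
Your proof is correct and takes the same route as the paper, which proves this only by invoking Taylor's formula with a parameter and citing Lemma 2.24 of \cite{ar:fernandez_graiffZurita_grillo:2021:error_analysis_of_forced_discrete_mechanical_systems}. Your Peano-remainder uniqueness argument for (1)$\Rightarrow$(2), the integral-remainder formula for $\delta f$, and the passage to the open set $U$ (star-shaped in $h$) are exactly the details that citation leaves implicit.
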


Proposition~\ref{prop:contact_order_and_derivatives} is an application
of Taylor's Formula (with a parameter). See Lemma 2.24
in~\cite{ar:fernandez_graiffZurita_grillo:2021:error_analysis_of_forced_discrete_mechanical_systems}.

In the next two results, we use open neighborhoods
$\calW_a\subset Q\times\R$ and $\calW'_a\subset Q'\times\R$ of
$Q\times\{0\}$ and $Q'\times \{0\}$, respectively.

\begin{proposition}\label{prop:order_of_compositions-R_vector_space-adapted}
  Let $f_a:\calW_a\rightarrow Q'$ and $g_a:\calW'_a\rightarrow Q''$ be
  smooth maps for $a=1,2$, such that such that
  $\conj{f_a}(\calW_a)\subset \calW'_a$ (for $a=1,2$),
  $f_2=f_1+\calO(h^{r+1})$ and $g_2=g_1+\calO(h^{r+1})$. Then,
  $g_2\circ \conj{f_2} = g_1\circ \conj{f_1} +\calO(h^{r+1})$.
\end{proposition}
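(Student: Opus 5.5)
The approach is to split the difference through the intermediate composition $g_2\circ\conj{f_1}$ and write
\begin{equation*}
  g_2\circ\conj{f_2} - g_1\circ\conj{f_1} = \big(g_2\circ\conj{f_2} - g_2\circ\conj{f_1}\big) + \big(g_2\circ\conj{f_1} - g_1\circ\conj{f_1}\big).
\end{equation*}
Each bracket should then be exhibited as $h^{r+1}$ times a continuous function on an open neighborhood $U$ of $Q\times\{0\}$. Following Definition~\ref{def:contact_order_global}, the domain on which the composition makes sense is $\calW_1\cap\calW_2$, intersected with the sets where the relevant points land in $\calW'_1\cap\calW'_2$. Since $\conj{f_a}$ preserves the $h$-coordinate and the sets $\calW'_a$ are open neighborhoods of $Q'\times\{0\}$, continuity shows that these preimages are open and contain $Q\times\{0\}$.

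For the second bracket, the hypothesis $g_2=g_1+\calO(h^{r+1})$ gives an open set $U'\supset Q'\times\{0\}$ and a continuous $\delta g$ with $g_2-g_1=h^{r+1}\delta g$ on $U'$. Evaluating at $\conj{f_1}(q,h)=(f_1(q,h),h)$, whose second coordinate is still $h$, yields $h^{r+1}\,\delta g(f_1(q,h),h)$. This is valid on the open set $\conj{f_1}^{-1}(U')$, which contains $Q\times\{0\}$ because $f_1(q,0)$ lies in $Q'$ and $U'\supset Q'\times\{0\}$.

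For the first bracket, I would use the integral form of the mean value theorem on the segment joining $f_1(q,h)$ and $f_2(q,h)=f_1(q,h)+h^{r+1}\delta f(q,h)$:
\begin{equation*}
  g_2(f_2(q,h),h)-g_2(f_1(q,h),h) = h^{r+1}\int_0^1 D_1g_2\big(f_1(q,h)+s\,h^{r+1}\delta f(q,h),h\big)\,ds\;\big(\delta f(q,h)\big),
\end{equation*}
which is $h^{r+1}$ times a continuous function. The main obstacle is making sure the whole segment lies in the domain $\calW'_2$, since $Q'$ may only be an open subset. To handle this, let $U''$ be the set of $(q,h)$ for which the entire segment $\{(f_1+s h^{r+1}\delta f,\,h): s\in[0,1]\}$ lies in $\calW'_2$. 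Compactness of $[0,1]$ (a tube lemma argument) and continuity of the map $(q,h,s)\mapsto(f_1(q,h)+s h^{r+1}\delta f(q,h),h)$ show that $U''$ is open. It contains $Q\times\{0\}$ because at $h=0$ the segment degenerates to the single point $(f_1(q,0),0)\in\calW'_2$.

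Alternatively, one can avoid the domain issue altogether by using Proposition~\ref{prop:contact_order_and_derivatives}: check that the $h$-derivatives up to order $r$ at $h=0$ of $g_a(f_a(q,h),h)$ agree, via the Faà di Bruno/chain rule, since they depend only on the $h$-jets of order $\le r$ of $f_a$ and $g_a$ at $h=0$, and these coincide. I would present the direct argument and mention this one as a cross-check.

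Finally, take $U$ to be the intersection of the open sets obtained above and $\delta(g\circ f)$ to be the sum of the two continuous factors. This gives $g_2\circ\conj{f_2}=g_1\circ\conj{f_1}+\calO(h^{r+1})$.
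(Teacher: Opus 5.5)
Your direct argument is correct, and it takes a genuinely different route from the paper's. The paper never works from Definition~\ref{def:contact_order_global} directly. Instead it uses the jet characterization of Proposition~\ref{prop:contact_order_and_derivatives} and checks that $D_2^j(g_a\circ\conj{f_a})(q,h)|_{h=0}$, for $j=0,\ldots,r$, coincide for $a=1,2$. It does this through a chain-rule (Fa\`a di Bruno type) recursion in terms of $D_1^\alpha D_2^\beta g_a(f_a(q,h),h)$ and $D_2^\gamma f_a(q,h)$, or alternatively by citing Proposition~3 of Cuell--Patrick. That is exactly what you list as your cross-check.

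Your telescoping through $g_2\circ\conj{f_1}$, combined with the integral form of the mean value theorem, gives several things the paper's proof does not:
\begin{itemize}
\item It is self-contained and produces an explicit continuous remainder, $\delta g(f_1(q,h),h)+\int_0^1 D_1g_2\big(f_1+s\,h^{r+1}\delta f,h\big)\,ds\,\big(\delta f(q,h)\big)$.
\item It needs far less regularity: only continuity of $D_1g_2$ and of the $f_a$. No Taylor expansion to order $r$ is required.
\end{itemize}
The price is the domain bookkeeping, and you handle it correctly. You note that $\conj{f_1}$ need not land in $\calW'_2$, and your tube-lemma set $U''$ is open and contains $Q\times\{0\}$, since $f_2(q,0)=f_1(q,0)$ collapses the segment to a point.

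The paper's jet comparison is shorter and purely pointwise at $h=0$; the neighborhood issues are absorbed into Proposition~\ref{prop:contact_order_and_derivatives}. It is also the same mechanism the paper reuses for inverses in Proposition~\ref{prop:order_of_inverses-R_vector_space-adapted_h=0}, where a direct telescoping argument would be less immediate.
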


\begin{proof}
  This result can be proved comparing the derivatives
  $D_2^j(g_a\circ \conj{f_a})(q,h)\big|_{h=0}$ for $a=1$ and $2$
  (Proposition~\ref{prop:contact_order_and_derivatives}), using a
  recursive formula for $\pd{}{h^k} \left(g_a(f_a(q,h),h)\right)$ in
  terms of a function (independent of $a$) of
  $D_1^\alpha D_2^\beta g_a(f_a(q,h),h)$ and $D_2^\gamma(f(q,h))$ for
  $0\leq \alpha, \beta, \gamma\leq k$.

  Alternatively, this result is an adaptation of (a part of)
  Proposition 3 of~\cite{ar:cuell_patrick:2007:skew_critical_problems}
  to the case of vector spaces.
\end{proof}


\begin{proposition}\label{prop:order_of_inverses-R_vector_space-adapted_h=0}
  Let $f_a:\calW_a\rightarrow Q'$ be smooth maps for $a=1,2$ such that
  $f_a|_{Q\times\{0\}}:Q\times\{0\}\rightarrow Q'$ are
  diffeomorphisms. Then,
  \begin{enumerate}
  \item\label{it:order_of_inverses-R_vector_space-adapted_h=0-diffeo}
    there are open subsets $U_a\subset \calW_a$ and
    $U'_a\subset Q'\times\R$ containing $Q\times\{0\}$ and
    $Q'\times\{0\}$ respectively, such that
    $\conj{f_a}(U_a)\subset U'_a$ and $\conj{f_a}|_{U_a}^{U'_a}$ is a
    diffeomorphism (onto).
  \item \label{it:order_of_inverses-R_vector_space-adapted_h=0-inverses}
    There are smooth maps $g_a:U'_a\rightarrow Q$ such that
    $\conj{g_a}|^{U_a}\circ \conj{f_a} = id_{U_a}$.
  \item \label{it:order_of_inverses-R_vector_space-adapted_h=0-order}
    If $f_2=f_1+\calO(h^{r+1})$ for some $r\in\N$, then
    $g_2=g_1+\calO(h^{r+1})$.
  \end{enumerate}
\end{proposition}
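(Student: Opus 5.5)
The plan is to treat $\conj{f_a}:\calW_a\rightarrow Q'\times\R$ as a map between open subsets of finite-dimensional vector spaces and use the inverse function theorem, together with a tubular-neighborhood style argument to pass from local to semi-global invertibility near $Q\times\{0\}$. Write $\psi_a\CE f_a|_{Q\times\{0\}}$, a diffeomorphism $Q\rightarrow Q'$.

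\textbf{Step 1: local invertibility.} At $(q,0)$ the differential of $\conj{f_a}$ is block triangular, with diagonal blocks $D_1f_a(q,0)=D\psi_a(q)$ and $1$. It is therefore invertible, so $\conj{f_a}$ is a local diffeomorphism on some open neighborhood of $Q\times\{0\}$.

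\textbf{Step 2: global injectivity.} This is the expected main obstacle, since $Q$ is not compact. Consider $\conj{\psi_a}^{-1}\circ\conj{f_a}$, where $\conj{\psi_a}(q,h)\CE(\psi_a(q),h)$. This composite is a local diffeomorphism that restricts to the identity on $Q\times\{0\}$ and preserves $h$.

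I would exhaust $Q$ by compact sets $K_n$ (closed balls of radius $n$). For each $n$, the standard lemma applies: a local homeomorphism that is injective on a compact set is injective on a neighborhood of it. This gives $\epsilon_n>0$ such that the composite is injective on $K_n\times(-\epsilon_n,\epsilon_n)$.

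Next choose a continuous positive function $\epsilon(q)$ that is small compared with the $\epsilon_n$ on shells. Since the composite preserves the $h$ coordinate, injectivity only needs to be checked on fixed-$h$ slices. Two points with the same image $(q'',h)$ and small $h$ must lie close to $q''$ in $Q$, because the composite is $C^0$-close to the identity uniformly on compacts for small $h$. Hence both points lie in a common $K_n$, which gives the contradiction.

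Set $U_a\CE\{(q,h):\abs{h}<\epsilon(q)\}$ and $U'_a\CE\conj{f_a}(U_a)$. Then $U'_a$ is open because $\conj{f_a}$ is a local diffeomorphism, and it contains $Q'\times\{0\}$ because $\psi_a$ is onto. Thus $\conj{f_a}|_{U_a}^{U'_a}$ is a bijective local diffeomorphism, hence a diffeomorphism, which proves item~\eqref{it:order_of_inverses-R_vector_space-adapted_h=0-diffeo}.

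\textbf{Step 3: the inverses.} Since $\conj{f_a}$ preserves $h$, its inverse has the form $(q',h)\mapsto(g_a(q',h),h)$. Defining $g_a\CE\pr_1\circ(\conj{f_a}|_{U_a}^{U'_a})^{-1}$ gives smooth maps with $\conj{g_a}|^{U_a}\circ\conj{f_a}=id_{U_a}$, which is item~\eqref{it:order_of_inverses-R_vector_space-adapted_h=0-inverses}.

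\textbf{Step 4: contact order of the inverses.} For item~\eqref{it:order_of_inverses-R_vector_space-adapted_h=0-order} I would use Proposition~\ref{prop:contact_order_and_derivatives}. Differentiate the identity $g_a(f_a(q,h),h)=q$ $j$ times in $h$ at $h=0$, for $j=0,\ldots,r$. The highest-order term is $D_1g_a(\psi_a(q),0)\,D_2^jf_a(q,0)+D_2^jg_a(\psi_a(q),0)$. All remaining terms are universal polynomials in lower $h$-derivatives $D_1^\alpha D_2^\beta g_a$ with $\beta<j$, evaluated at $(\psi_a(q),0)$, and in $D_2^\gamma f_a(q,0)$ with $\gamma\le j$.

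For $j=0$ we get $g_a(\cdot,0)=\psi_a^{-1}$, and $\psi_1=\psi_2$ by the $j=0$ case of the hypothesis. Inducting on $j$, the data for $a=1,2$ coincide locally in $q$, and so do the $q$-derivatives $D_1^\alpha$, since the equalities hold on open sets. Solving for $D_2^jg_a(q',0)$ with $q'=\psi_a(q)$ then shows these are equal for $a=1,2$. Proposition~\ref{prop:contact_order_and_derivatives} then yields $g_2=g_1+\calO(h^{r+1})$.

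Alternatively, one could apply Proposition~\ref{prop:order_of_compositions-R_vector_space-adapted}: write $g_2\circ\conj{f_1}=g_2\circ\conj{f_2}+\calO(h^{r+1})=\pr_1=g_1\circ\conj{f_1}$, then compose on the right with $\conj{g_1}$.
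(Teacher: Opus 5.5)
Your proposal is correct. Steps~1 and~3 and the first argument of Step~4 are the paper's own. The paper also gets local invertibility at $(q,0)$ from the product structure and defines $g_a$ as the first component of $(\conj{f_a}|_{U_a}^{U'_a})^{-1}$. It also proves item~(3) by comparing $h$-derivatives at $h=0$ through a recursive formula and Proposition~\ref{prop:contact_order_and_derivatives}.

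The genuine difference is item~(1). The paper does not prove the passage from local to semi-global invertibility. It invokes Theorem~1 of~\cite{ar:cuell_patrick:2007:skew_critical_problems}, using only two facts: $\conj{f_a}$ is a local diffeomorphism along $Q\times\{0\}$, and it restricts to a bijection onto the closed submanifold $Q'\times\{0\}$. You instead give a self-contained argument that exploits the special structure here. The map $\conj{\psi_a}^{-1}\circ\conj{f_a}$ preserves $h$ and is uniformly close to the identity on compacts, so a tube of variable width $\{\abs{h}<\epsilon(q)\}$ suffices. The citation is shorter and needs no fibered structure; your route is elementary and makes the neighborhoods explicit.

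When you write it out, remember that the second preimage may lie in a neighboring shell. So on $n-1<\norm{q}\le n$ you should take $\epsilon(q)$ below $\epsilon_{n+2}$. It must also be small enough that $\pr_1\circ\conj{\psi_a}^{-1}\circ\conj{f_a}$ moves $q$ by less than $1$.

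Your alternative for item~(3) is also valid and avoids the combinatorics entirely. First shrink domains, for instance restricting $\conj{f_1}$ to $U_1\cap\conj{f_1}^{-1}(U'_2)$, which still contains $Q\times\{0\}$. Then Proposition~\ref{prop:order_of_compositions-R_vector_space-adapted} gives $g_2\circ\conj{f_1}=g_2\circ\conj{f_2}+\calO(h^{r+1})=\pr_1+\calO(h^{r+1})$. A second application with inner map $g_1$, together with $\conj{f_1}\circ\conj{g_1}=id_{U'_1}$, yields $g_2=g_1+\calO(h^{r+1})$.
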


\begin{proof}
  Using the Cartesian product structures and the fact that
  $f_a|_{Q\times\{0\}}:Q\times\{0\}\rightarrow Q'$ is a
  diffeomorphism, we see that $\conj{f_a}$ is a local diffeomorphism
  at each $(q,0)$. Then, as
  $f_a|_{Q\times\{0\}}:Q\times\{0\}\rightarrow Q'$ is a bijection
  between closed submanifolds of $\calW_a$ and $Q'\times\R$,
  statement~\eqref{it:order_of_inverses-R_vector_space-adapted_h=0-diffeo}
  follows from Theorem 1
  in~\cite{ar:cuell_patrick:2007:skew_critical_problems}\footnote{The proof
    of this result refers to~\cite{bo:lang-differential_manifolds}. An
    alternative reference is to follow Exercise 14 on p. 56
    of~\cite{bo:Guillemin-Pollack-differential_topology}.}.

  As $\conj{f_a}|_{U_a}^{U'_a}$ is a diffeomorphism, we can define
  $g_a \CE p_1\circ \left( \conj{f_a}|_{U_a}^{U'_a}\right)^{-1}$ and
  it is easy to check that it satisfies the condition that appears in
  statement~\eqref{it:order_of_inverses-R_vector_space-adapted_h=0-inverses}.
  
  Statement~\eqref{it:order_of_inverses-R_vector_space-adapted_h=0-order}
  can be proved comparing the derivatives
  $(D_2^jg_a)(f_a(q,h),h)\big|_{h=0}$ for $a=1$ and $2$
  (Proposition~\ref{prop:contact_order_and_derivatives}), using a
  recursive formula for $\pd{}{h^k} (D_2^jg_a)(f_a(q,h),h)$ in terms
  of a function (independent of $a$) of
  $(D_1^\alpha D_2^\beta g_a)(f_a(q,h),h)$ and $(D_2^\gamma f)(q,h)$
  for $0\leq \alpha, \gamma \leq k$, $0\leq \beta < k$ and for each
  $k\in\N$. 

  Alternatively,
  statement~\eqref{it:order_of_inverses-R_vector_space-adapted_h=0-order}
  is an adaptation of Proposition 4
  in~\cite{ar:cuell_patrick:2007:skew_critical_problems} to the case of
  vector spaces.
\end{proof}

When $\calW\subset Q\times \R$ is an open subset,
$T\calW = (TQ\oplus T\R)|_\calW$. Then, if $f:\calW\rightarrow Q'$ is
smooth, we consider $D_1f$ to be the restriction of $Tf$ to
$(TQ\oplus\{0\})|_\calW$. As
$(TQ\oplus\{0\})|_\calW \simeq \calW\times Q$ is an open subset in a
vector space, we can apply our order of contact notions to $D_1f$.

\begin{lemma}\label{le:order_of_derivative}
  Let $f_a:\calW_a \rightarrow \R$ be smooth maps for $a=1,2$ such
  that $f_2 = f_1 + \calO(h^{r+1})$. Then,
  $D_1f_a:(TQ\times\{0\})|_{\calW_a}\rightarrow \R$ ($a=1,2$) satisfy
  $D_1f_2 = D_1f_1 + \calO(h^{r+1})$.
\end{lemma}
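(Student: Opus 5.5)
The plan is to work directly from Definition~\ref{def:contact_order_global} rather than through derivatives in $h$. Even so, Proposition~\ref{prop:contact_order_and_derivatives} offers an equally short route, and I use it as a cross-check.

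By hypothesis there are an open set $U\subset\calW_1\cap\calW_2$ containing $Q\times\{0\}$ and a continuous $\delta f:U\to\R$ with
\begin{equation*}
  f_2(q,h)-f_1(q,h)=h^{r+1}\delta f(q,h) \stext{ on } U.
\end{equation*}
The obstacle is that $\delta f$ is only known to be continuous, so the identity cannot be differentiated in $q$ directly. I first show that $\delta f$ may be taken smooth. Since $f_2-f_1$ is smooth and its first $r$ $h$-derivatives vanish at $h=0$, Taylor's formula with integral remainder in the variable $h$ gives
\begin{equation*}
  \delta f(q,h)=\frac{1}{r!}\int_0^1(1-s)^r\,D_2^{r+1}(f_2-f_1)(q,sh)\,ds.
\end{equation*}
The vanishing of those derivatives is exactly what Proposition~\ref{prop:contact_order_and_derivatives} provides. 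For this formula to make sense, I shrink $U$ to a neighborhood of $Q\times\{0\}$ whose $h$-slices are intervals containing $0$. Such a neighborhood exists: for each $q$, take a product neighborhood $V_q\times(-\epsilon_q,\epsilon_q)$ inside $U$ and let $U'$ be their union. On $U'$ the right-hand side is smooth by differentiation under the integral sign, and it agrees with the original $\delta f$ wherever $h\neq0$. At $h=0$ the value is not determined by the identity, so I simply redefine $\delta f$ there by this formula.

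With $\delta f$ smooth, I differentiate the identity in the $Q$ direction at a point $(q,h)\in U'$ along $\delta q\in Q$. This gives
\begin{equation*}
  D_1f_2(q,h)(\delta q)-D_1f_1(q,h)(\delta q)=h^{r+1}\,D_1\delta f(q,h)(\delta q).
\end{equation*}
The map $((q,\delta q),h)\mapsto D_1\delta f(q,h)(\delta q)$ is continuous on the open set $(TQ\times\{0\})|_{U'}\simeq U'\times Q$, which contains the zero section at $h=0$. By Definition~\ref{def:contact_order_global}, this is exactly $D_1f_2=D_1f_1+\calO(h^{r+1})$.

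As a cross-check, the same conclusion follows from Proposition~\ref{prop:contact_order_and_derivatives}. Since the $f_a$ are smooth, $D_2^j$ and $D_1$ commute. The equality $D_2^jf_2(q',0)=D_2^jf_1(q',0)$ holds for $q'$ in an open set $V_q$ and $j\le r$, so differentiating in $q'$ preserves it. This gives the derivative criterion for $D_1f_a$ on $V_q\times Q$, and the proposition then converts it back into the contact statement.
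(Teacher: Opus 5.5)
Your proof is correct, and your main route differs from the paper's. The paper's argument is precisely your cross-check. It invokes Proposition~\ref{prop:contact_order_and_derivatives} in both directions: it applies $D_1$ to the identities $D_2^jf_1(q,0)=D_2^jf_2(q,0)$, $j=0,\ldots,r$ (valid on open sets, and $D_1$ commutes with $D_2^j$), and then reads the resulting identities for $D_1f_a$ back as contact of order $r$.

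Your main argument uses only the easy direction of that proposition, namely the vanishing of the low-order $h$-derivatives of $f_2-f_1$. It replaces the harder converse direction by an explicit integral Taylor remainder on a neighborhood $U'$ whose $h$-slices are intervals about $0$. The payoff is that the continuous remainder $\delta f$ of Definition~\ref{def:contact_order_global} can always be chosen smooth. After that, the lemma reduces to differentiating $f_2-f_1=h^{r+1}\delta f$ in $q$.

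Two small remarks:
\begin{enumerate}
\item The ``redefinition'' at $h=0$ is unnecessary. The original $\delta f$ and your integral formula are both continuous and agree on the dense set $\{h\neq0\}\cap U'$.
\item For $D_1f_a$, Definition~\ref{def:contact_order_global} needs the domain $U'\times Q$ to contain all of $TQ\times\{0\}$ (every $\delta q$, not just $\delta q=0$). It does, because $U'\supset Q\times\{0\}$; ``zero section'' undersells this.
\end{enumerate}

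As for trade-offs, the paper's route is shorter and matches the derivative-comparison style used for Propositions~\ref{prop:order_of_compositions-R_vector_space-adapted} and~\ref{prop:order_of_inverses-R_vector_space-adapted_h=0}. Yours produces an explicit smooth remainder and neighborhood. It also shows at once that contact of order $r$ survives any differentiation in the $Q$-variables, not just a single $D_1$.
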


\begin{proof}
  Using Proposition~\ref{prop:contact_order_and_derivatives} this
  computation can be done checking that the first $r$ derivatives with
  respect to $h$ of $D_1f_1$ and $D_1f_2$ at points with $h=0$ are the
  same. This, in turn, follows by applying $D_1$ to the
  $D_2^jf_1(q,0) = D_2^jf_2(q,0)$ (for $j=0,\ldots,r$), which is valid
  by the hypotheses and the Proposition.
\end{proof}


\subsection{Contact order of systems}
\label{sec:contact_order_of_systems}

Here we extend the notion of contact order between maps to contact
order between FDHSs.

\begin{definition}\label{def:contact_order_between_FDHS}
  Let $\calM_{d,h}^1\CE (Q,H_{d,h}^1,\phi_{d,h}^1)$ and
  $\calM_{d,h}^2\CE (Q,H_{d,h}^2,\phi_{d,h}^2)$ be two smooth
  $1$-parameter families of forced discrete Hamiltonian systems. They
  are said to have \jdef{contact of order $r$} if
  $H_{d,h}^2=H_{d,h}^1+\calO(h^{r+1})$ and
  $\phi_{d,h}^2=\phi_{d,h}^1+\calO(h^{r+1})$; in this case, we write
  $\calM_{d,h}^2=\calM_{d,h}^1+\calO(h^{r+1})$.
\end{definition}

\begin{proposition}
  \label{prop:contact_orde_of_forced_discrete_legendre_transforms}
  Let $\calM_{d,h}^1\CE (Q,H_{d,h}^1,\phi_{d,h}^1)$ and
  $\calM_{d,h}^2\CE (Q,H_{d,h}^2,\phi_{d,h}^2)$ be two smooth
  $1$-parameter families of forced discrete Hamiltonian systems such
  that $\calM_{d,h}^2=\calM_{d,h}^1+\calO(h^{r+1})$. Then, the
  corresponding forced discrete Legendre transforms have contact order
  $r$, that is,
  \begin{equation*}
    \ff^\pm_{\phi_d^2} H_d^2 = \ff^\pm_{\phi_d^1} H_d^1 + \calO(h^{r+1}).
  \end{equation*}
\end{proposition}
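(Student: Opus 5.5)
The plan is to work componentwise, using the explicit formulas
\begin{equation*}
  \ff^+_{\phi_d^a} H_d^a(q_0,p_1,h) = \left( D_2H_{d,h}^a(q_0,p_1) - (\phi_{d,h}^a)^p(q_0,p_1), p_1\right),
\end{equation*}
\begin{equation*}
  \ff^-_{\phi_d^a} H_d^a(q_0,p_1,h) = \left( q_0, D_1H_{d,h}^a(q_0,p_1) - (\phi_{d,h}^a)^q(q_0,p_1)\right),
\end{equation*}
for $a=1,2$. Each transform is regarded as a smooth map on an open neighborhood of $T^*Q\times\{0\}$ in $T^*Q\times\R$. Since $T^*Q=Q\times Q^*$ is a vector space, Definition~\ref{def:contact_order_global} can be checked component by component. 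The trivial components ($p_1$ for $+$ and $q_0$ for $-$) are identical for $a=1,2$, so their difference is $h^{r+1}\cdot 0$ on the common domain.

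For the nontrivial components, I will treat the two ingredients separately and then use that a difference of the form $h^{r+1}\delta f + h^{r+1}\delta g$ is again $\calO(h^{r+1})$ on the intersection of the two open sets, with the continuous remainder $\delta f+\delta g$.

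First, from $H_{d,h}^2 = H_{d,h}^1+\calO(h^{r+1})$, apply Lemma~\ref{le:order_of_derivative} with the vector space $T^*Q$ in place of $Q$. This gives $D H_{d,h}^2 = DH_{d,h}^1 + \calO(h^{r+1})$ as maps on $(T(T^*Q)\times\{0\})|_{\calW}$. Evaluating on vectors $(\delta q,0)$ or $(0,\delta p)$ then yields $D_1H^2_{d,h} = D_1H^1_{d,h}+\calO(h^{r+1})$ and $D_2H^2_{d,h}=D_2H^1_{d,h}+\calO(h^{r+1})$.

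To make this precise, I will identify $D_1H$ (respectively $D_2H$) with a map $\calW\to Q^*$ (respectively $Q$). The cleanest route is Proposition~\ref{prop:contact_order_and_derivatives}: the $h$-derivatives at $h=0$ of $H^1$ and $H^2$ up to order $r$ agree on an open neighborhood $V$ of each point. Differentiating these identities in the $T^*Q$ directions, which is allowed because they hold on an open set and the maps are smooth so partial derivatives commute, gives equality of $D_2^j$ of the partial derivatives at $h=0$. Proposition~\ref{prop:contact_order_and_derivatives} then gives the contact order.

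Second, $\phi^2_{d,h}=\phi^1_{d,h}+\calO(h^{r+1})$ means that $\phi^a_{d,h}$, viewed as maps into $(Q\times Q^*)^* \simeq Q^*\times Q$, have difference $h^{r+1}\delta\phi$. Projecting $\delta\phi$ onto the two factors gives the statement directly for $(\phi^a_{d,h})^q$ and $(\phi^a_{d,h})^p$. Combining the two steps finishes the proof.

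The only step needing care is the first one, namely justifying that contact order passes to partial derivatives in the base variables with values in a vector space rather than $\R$. Lemma~\ref{le:order_of_derivative} covers this essentially verbatim, applied coordinatewise after choosing bases, so I expect no real obstacle. One should also note that the domains involved are all open neighborhoods of $T^*Q\times\{0\}$, and finite intersections of such sets remain so.
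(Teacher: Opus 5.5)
Your proposal is correct and follows essentially the same route as the paper, which simply applies Lemma~\ref{le:order_of_derivative} to $H_{d,h}^2=H_{d,h}^1+\calO(h^{r+1})$ and combines it with $\phi_{d,h}^2=\phi_{d,h}^1+\calO(h^{r+1})$. Your componentwise bookkeeping makes explicit the details the paper leaves implicit: the identical trivial components, evaluating on basis vectors to pass from $DH$ to $D_1H$ and $D_2H$, and splitting $\delta\phi$ into its $q$- and $p$-parts.
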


\begin{proof}
  It follows applying Lemma~\ref{le:order_of_derivative} to
  $H_{d,h}^2 = H_{d,h}^1 +\calO(h^{r+1})$ and using that
  $\phi_{d,h}^2 = \phi_{d,h}^1+ \calO(h^{r+1})$.
\end{proof}

The next result is the key to the error analysis of forced Hamiltonian
systems.

\begin{theorem}\label{th:contact_order_imp_flow_contact_order}
  Let $\calM_{d,h}^1\CE (Q,H_{d,h}^1,\phi_{d,h}^1)$ and
  $\calM_{d,h}^2\CE (Q,H_{d,h}^2,\phi_{d,h}^2)$ be two smooth
  $1$-parameter families of forced discrete Hamiltonian systems such
  that their forced discrete Legendre transforms
  $\ff^\pm_{\phi_{d,h}^a} H_{d,h}^a$ (for $a=1,2$) are diffeomorphisms
  when $h=0$.  If $\calM_{d,h}^2=\calM_{d,h}^1+ \calO(h^{r+1})$, then,
  the corresponding discrete flows $\FlowD{\calM_{d,h}^1}$ and
  $\FlowD{\calM_{d,h}^2}$ have contact order $r$, that is
  \begin{equation*}
    \FlowD{\calM_{d,h}^2} = \FlowD{\calM_{d,h}^1} + \calO(h^{r+1}).
  \end{equation*}
\end{theorem}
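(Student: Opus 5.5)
The plan is to write each discrete flow as a composition of the forced discrete Legendre transforms. By Remark~\ref{rem:discrete_flow_in_terms_of_forded_discrete_legendre_transforms} and the discussion before Proposition~\ref{prop:evolution_of_omega_Q_with_forces},
\begin{equation*}
  \FlowD{\calM_{d,h}^a}(q,p) = \ff^+_{\phi_{d,h}^a} H_{d,h}^a\circ \left(\ff^-_{\phi_{d,h}^a} H_{d,h}^a\right)^{-1}(q,p), \quad a=1,2,
\end{equation*}
where everything is understood as depending on the parameter $h$. We then transfer the contact order through the inverse and the composition using the two propositions on compositions and inverses.

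Concretely, regard $f_a(x,h)\CE \ff^-_{\phi_{d,h}^a}H_{d,h}^a(x)$ and $k_a(x,h)\CE \ff^+_{\phi_{d,h}^a}H_{d,h}^a(x)$ as smooth maps defined on an open neighborhood $\calW_a\subset T^*Q\times\R$ of $T^*Q\times\{0\}$ with values in $T^*Q$. By Proposition~\ref{prop:contact_orde_of_forced_discrete_legendre_transforms}, $f_2=f_1+\calO(h^{r+1})$ and $k_2=k_1+\calO(h^{r+1})$. Since $f_a|_{T^*Q\times\{0\}}$ is a diffeomorphism by hypothesis, Proposition~\ref{prop:order_of_inverses-R_vector_space-adapted_h=0} applies. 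It provides neighborhoods $U_a, U'_a$ of $T^*Q\times\{0\}$ on which $\conj{f_a}$ is a diffeomorphism, together with smooth maps $g_a:U'_a\to T^*Q$ such that $\conj{g_a}=(\conj{f_a}|_{U_a}^{U'_a})^{-1}$ and $g_2=g_1+\calO(h^{r+1})$.

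Then $\FlowD{\calM_{d,h}^a}=k_a\circ\conj{g_a}$ on $U'_a$. This identity relies on the explicit formula~\eqref{eq:discrete_flow_in_terms_of_legendre_transforms}, and Proposition~\ref{prop:existence_of_discrete_flow_near_extended_trajectory} shows it is the flow of extended trajectories there. Finally, Proposition~\ref{prop:order_of_compositions-R_vector_space-adapted}, applied with $\conj{g_a}(U'_a)\subset U_a\subset\calW_a$, gives $k_2\circ\conj{g_2}=k_1\circ\conj{g_1}+\calO(h^{r+1})$, which is the claim.

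The main obstacle is bookkeeping of domains rather than estimates. First, the flows are only defined locally, so we must make sure the neighborhoods produced by Proposition~\ref{prop:order_of_inverses-R_vector_space-adapted_h=0} contain all of $T^*Q\times\{0\}$. The hypothesis that the Legendre transforms are global diffeomorphisms at $h=0$ is used exactly for this. Second, we must check that the inclusion hypothesis $\conj{g_a}(U'_a)\subset\calW_a$ of Proposition~\ref{prop:order_of_compositions-R_vector_space-adapted} holds. If necessary, we will shrink $U'_a$ to $U'_1\cap U'_2$, which is still an open neighborhood of $T^*Q\times\{0\}$, so that the contact-order comparison takes place on a common domain as required in Definition~\ref{def:contact_order_global}. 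The hypothesis on $\ff^+$ at $h=0$ is not strictly needed for the composition step, but it guarantees that the flow is a local diffeomorphism near $h=0$, consistent with regularity.
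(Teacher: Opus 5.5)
Your proposal is correct and follows essentially the same route as the paper: contact order of the forced discrete Legendre transforms via Proposition~\ref{prop:contact_orde_of_forced_discrete_legendre_transforms}, transfer to the inverse of $\ff^-$ via Proposition~\ref{prop:order_of_inverses-R_vector_space-adapted_h=0}, then to the composition $\ff^+\circ(\ff^-)^{-1}$ via Proposition~\ref{prop:order_of_compositions-R_vector_space-adapted}, identified with the flow by~\eqref{eq:discrete_flow_in_terms_of_legendre_transforms}. Your added domain bookkeeping (the common neighborhood $U'_1\cap U'_2$ and the inclusion $\conj{g_a}(U'_a)\subset\calW_a$) and your remark that only the $h=0$ invertibility of $\ff^-$ is actually needed are sound refinements of the paper's terser argument.
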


\begin{proof}
  By hypothesis, $\calM_{d,h}^2=\calM_{d,h}^1+ \calO(h^{r+1})$, so
  that, by
  Proposition~\ref{prop:contact_orde_of_forced_discrete_legendre_transforms},
  $\ff^\pm_{\phi_{d,h}^2} H_{d,h}^2 = \ff^\pm_{\phi_{d,h}^1} H_{d,h}^1
  + \calO(h^{r+1})$. As both $\ff^\pm_{\phi_{d,h}^1} H_{d,h}^1$ and
  $\ff^\pm_{\phi_{d,h}^2} H_{d,h}^2$ are diffeomorphisms when $h=0$,
  by
  Proposition~\ref{prop:order_of_inverses-R_vector_space-adapted_h=0},
  we see that
  $(\ff^\pm_{\phi_{d,h}^2} H_{d,h}^2)^{-1} = (\ff^\pm_{\phi_{d,h}^1}
  H_{d,h}^1)^{-1} + \calO(h^{r+1})$. Then, by
  Proposition~\ref{prop:order_of_compositions-R_vector_space-adapted},
  we have that
  \begin{equation*}
    \ff^+_{\phi_{d,h}^2} H_{d,h}^2 \circ (\ff^-_{\phi_{d,h}^2} H_{d,h}^2)^{-1} =
    \ff^+_{\phi_{d,h}^1} H_{d,h}^1 \circ (\ff^-_{\phi_{d,h}^1} H_{d,h}^1)^{-1} +
    \calO(h^{r+1}).
  \end{equation*}
  The statement now follows
  from~\eqref{eq:discrete_flow_in_terms_of_legendre_transforms}.
\end{proof}


\subsection{Discretizations of FDHSs}
\label{sec:error_analysis-discretizations_of_FDHSs}

Now we come back to the relationship between a family of forced
discrete Hamiltonian systems and a given, continuous, one.

\begin{definition}\label{def:discretization}
  Given a forced Hamiltonian system $\calM\CE (Q,H,\phi)$, a smooth
  $1$-parameter family of forced discrete Hamiltonian systems
  $\calM_{d,h}\CE (Q,H_{d,h},\phi_{d,h})$ is a \jdef{discretization}
  of $\calM$ if it satisfies
  \begin{equation}\label{eq:discretization-conditions}
    \begin{gathered}
      H_{d,h}(q_0,p_1) = p_1q_0 + h H(q_0,p_1) + \calO(h^2),\\
      \phi_{d,h}(q_0,p_1) = h \phi(q_0,p_1) + \calO(h^2).
    \end{gathered}
  \end{equation}
\end{definition}

\begin{example}\label{ex:exact_discretizations_are_discretizations}
  Let $\calM\CE (Q,H,\phi)$ be a forced Hamiltonian system and
  $\calM^E_{d,h}\CE (Q,H^E_{d,h},\phi^E_{d,h})$ be the $1$-parameter
  family of exact discrete Hamiltonian systems associated to $\calM$
  constructed in
  Example~\ref{ex:exact_discrete_hamiltonian_system}. As
  $H^E_{d,h}$ and $\phi^E_{d,h}$ are smooth functions of all of their
  variables, including $h$ (near $0$), a careful Taylor expansion
  around $h=0$ shows that
  \begin{equation*}
    \begin{split}
      H_{d,h}^E(q_0,p_1) =& p_1q_0 + h H(q_0,p_1) + \calO(h^2),\\
      \phi_{d,h}^E(q_0,p_1) =& h \check{\phi}(q_0,p_1) + \calO(h^2).
    \end{split}
  \end{equation*}
  We see that $\calM^E_{d,h}$ is a discretization of $\calM$ that we
  naturally call the \jdef{exact discretization} of $\calM$. In fact,
  a longer and more careful computation shows that
  \begin{equation*}
    \begin{split}
      H_{d,h}^E(q_0,p_1) = p_1q_0 + H(q_0,p_1) h + \frac{1}{2} D_1H(q_0,p_1) D_2H(q_0,p_1) h^2 + \calO(h^3)
    \end{split}
  \end{equation*}
  and
  \begin{equation*}
    \begin{split}
      \phi_{d,h}^E(q_0,p_1)&(\delta q_0,\delta p_1) =
                                                     \check{\phi}(q_0,p_1)(\delta q_0) h + \frac{1}{2} \bigg(\ip{D_1(\check{\phi}(q_0,p_1)(\delta q_0))}{D_2H(q_0,p_1)}  \\& \phantom{ = } + \ip{\check{\phi}(q_0,p_1)}{D_{21}H(q_0,p_1)(\delta q_0) +D_{22}H(q_0,p_1)(\delta p_1)}
      \\& \phantom{ = } + \ip{D_2(\check{\phi}(q_0,p_1)(\delta q_0))}{D_1H(q_0,p_1)-\check{\phi}(q_0,p_1)}\bigg) h^2 + \calO(h^3).
    \end{split}
  \end{equation*}
  Alternatively, using local coordinates,
  \begin{equation}\label{eq:H_d^E-order_2}
    H_{d,h}^E(q_0,p_1) = p_{1,j}q_0^j + H(q_0,p_1) h + \frac{h^2}{2} \pd{H(q_0,p_1)}{q_0^j} \pd{H(q_0,p_1)}{p_{1,j}} + \calO(h^3)
  \end{equation}
  and
  \begin{equation}\label{eq:phi_d^E-order_2}
    \begin{split}
      \phi_{d,h}^E(q_0,p_1) =& h \sum_{k=1}^n \check{\phi}_k(q_0,p_1) dq_0^k + \frac{h^2}{2} \bigg[ \check{\phi}_j(q_0,p_1)  \frac{\partial^2 H(q_0,p_1)}{\partial p_{1,j} \partial p_{1,k}} d p_{1,k} \\&+\bigg( \pd{\check{\phi}_k(q_0,p_1)}{q_0^j} \pd{H(q_0,p_1)}{p_{1,j}} + \check{\phi}_j(q_0,p_1) \frac{\partial^2 H(q_0,p_1)}{\partial p_{1,j} \partial q_0^k}\bigg) dq_0^k    \bigg] + \calO(h^3). 
    \end{split}
  \end{equation}
\end{example}

\begin{definition}
  A discretization, $\calM_{d,h}$, of the forced Hamiltonian system
  $\calM\CE (Q,H,\phi)$ is said to have \jdef{contact of order $r$} if
  $\calM_{d,h} = \calM_{d,h}^E + \calO(h^{r+1})$, where
  $\calM_{d,h}^E$ is the exact discretization of $\calM$. Notice that
  all discretizations of $\calM$ have contact order $\geq 1$.
\end{definition}

We can apply Theorem~\ref{th:contact_order_imp_flow_contact_order}
to discretizations as follows.

\begin{theorem}
  \label{th:contact_order_imp_flow_contact_order-discretizations}
  Given a forced Hamiltonian system $\calM\CE (Q,H,\phi)$ let
  $\calM_{d,h}$ be any discretization of $\calM$ of contact order
  $r$. Then
  \begin{equation*}
    \FlowC{\calM}{h} = \FlowD{\calM_{d,h}^E} = \FlowD{\calM_{d,h}} +
    \calO(h^{r+1}).
  \end{equation*}
\end{theorem}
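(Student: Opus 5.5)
The statement contains two claims, and I will prove them separately: the identity $\FlowC{\calM}{h} = \FlowD{\calM_{d,h}^E}$, and the contact estimate $\FlowD{\calM_{d,h}^E} = \FlowD{\calM_{d,h}} + \calO(h^{r+1})$.

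\emph{The identity.} For $(q_0,p_0)$ and small $h$, let $(q(t),p(t))$ be the trajectory of $\calM$ with initial condition $(q_0,p_0)$. By Proposition~\ref{prop:exact_trajectories_imp_disc_trajectories}, applied with a length-one extended trajectory (the argument there works verbatim with two points instead of three), $((q(0),p(0)),(q(h),p(h)))$ is an extended trajectory of $\calM_{d,h}^E$. Lemma~\ref{le:legendre_transforms_for_exact_system} gives the Legendre transforms explicitly:
\begin{equation*}
  \ff^-_{\phi_{d,h}^E}H_{d,h}^E(q_0,p_1)=(q_0,p(0)), \qquad
  \ff^+_{\phi_{d,h}^E}H_{d,h}^E(q_0,p_1)=(q(h),p_1).
\end{equation*}
Inverting $\ff^-$ therefore amounts to solving the mixed boundary value problem for $p_1$. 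Composing with $\ff^+$ via~\eqref{eq:discrete_flow_in_terms_of_legendre_transforms} yields exactly $(q(h),p(h))=\FlowC{\calM}{h}(q_0,p_0)$. The uniqueness part of Proposition~\ref{prop:flow_of_hamilton_equations-b} guarantees that this composition is the locally well defined flow, so $\FlowD{\calM_{d,h}^E}=\FlowC{\calM}{h}$ on a neighborhood of $T^*Q\times\{0\}$.

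\emph{The contact estimate.} This will follow from Theorem~\ref{th:contact_order_imp_flow_contact_order} with $\calM^1_{d,h}=\calM^E_{d,h}$ and $\calM^2_{d,h}=\calM_{d,h}$, since the hypothesis $\calM_{d,h}=\calM_{d,h}^E+\calO(h^{r+1})$ is exactly the definition of contact order $r$. The only remaining hypothesis of that theorem is that all four forced discrete Legendre transforms are diffeomorphisms at $h=0$. This is where the discretization conditions~\eqref{eq:discretization-conditions} enter, and it is the main obstacle.

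From~\eqref{eq:discretization-conditions}, $H_{d,0}(q_0,p_1)=p_1q_0$ and $\phi_{d,0}=0$, and the same holds for the exact family by~\eqref{eq:exact_discrete_hamiltonian_system-II-h=0}. Hence $D_2H_{d,0}(q_0,p_1)=q_0$ and $D_1H_{d,0}(q_0,p_1)=p_1$, so
\begin{equation*}
  \ff^+_{\phi_{d,0}}H_{d,0}(q_0,p_1)=(q_0,p_1), \qquad
  \ff^-_{\phi_{d,0}}H_{d,0}(q_0,p_1)=(q_0,p_1).
\end{equation*}
Both transforms are therefore the identity at $h=0$ (compare Example~\ref{ex:extended_trajectories_of_exact_FDHS_h=0}), and in particular diffeomorphisms. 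For the exact system this also requires that $H^E_{d,h}$ and $\phi^E_{d,h}$ be smooth in $h$ near $0$ and coincide at $h=0$ with~\eqref{eq:exact_discrete_hamiltonian_system-II-h=0}. That is supplied by Proposition~\ref{prop:existence_of_exact_discrete_hamiltonian_system-local} together with the Taylor expansion of Example~\ref{ex:exact_discretizations_are_discretizations}.

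One technical point needs care: Proposition~\ref{prop:existence_of_exact_discrete_hamiltonian_system-local} only provides $H^E_{d,h}$ locally, on balls around each point and for $h$ in an interval. I would cover $T^*Q\times\{0\}$ by such neighborhoods and use the uniqueness clause in Proposition~\ref{prop:flow_of_hamilton_equations-b} to glue them into a smooth family on an open neighborhood $\calW$ of $T^*Q\times\{0\}$. Once this is done, Definition~\ref{def:contact_order_global} and the inverse construction of Proposition~\ref{prop:order_of_inverses-R_vector_space-adapted_h=0} apply. Theorem~\ref{th:contact_order_imp_flow_contact_order} then gives $\FlowD{\calM_{d,h}}=\FlowD{\calM^E_{d,h}}+\calO(h^{r+1})$, which combined with the identity above concludes the proof.
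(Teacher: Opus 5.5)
Your proposal is correct and follows essentially the same route as the paper. You obtain the first identity from Proposition~\ref{prop:exact_trajectories_imp_disc_trajectories} together with Lemma~\ref{le:legendre_transforms_for_exact_system}, and the contact estimate from Theorem~\ref{th:contact_order_imp_flow_contact_order} once all forced discrete Legendre transforms are shown to be the identity at $h=0$. The only difference is minor: you read off $\ff^\pm_{\phi_{d,0}}H_{d,0}=id_{T^*Q}$ directly from~\eqref{eq:discretization-conditions}, whereas the paper proves it for the exact system via Lemma~\ref{le:legendre_transforms_for_exact_system} and transfers it to $\calM_{d,h}$ through Proposition~\ref{prop:contact_orde_of_forced_discrete_legendre_transforms}. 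Your remark on gluing the local exact systems is extra care that the paper omits.
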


\begin{proof}
  The first identity is valid by
  Proposition~\ref{prop:exact_trajectories_imp_disc_trajectories}. By
  Lemma~\ref{le:legendre_transforms_for_exact_system} with $h=0$, we
  have $\ff^\pm_{\phi_{d,0}^E} H_{d,0}^E = id_{T^*Q}$ and, using
  Proposition~\ref{prop:contact_orde_of_forced_discrete_legendre_transforms}
  together with $\calM_{d,h} = \calM_{d,h}^E + \calO(h^{r+1})$, we see
  that $\ff^\pm_{\phi_{d,0}} H_{d,0} = id_{T^*Q}$. Then, the second
  identity in the statement follows from
  Theorem~\ref{th:contact_order_imp_flow_contact_order}, because both
  discrete Legendre transforms are diffeomorphisms when $h=0$ and
  $\calM_{d,h} = \calM_{d,h}^E +\calO(h^{r+1})$.
\end{proof}


\section{Construction of forced discrete Hamiltonian systems}
\label{sec:construction_of_FDHS}

In this section, we consider the practical construction of forced
discrete Hamiltonian systems $\calM_{d,h}\CE (Q,H_{d,h},\phi_{d,h})$
that approximate a given forced Hamiltonian system
$\calM\CE (Q,H,\phi)$. There are different possible approaches. One of
them relies on the approximations of orders $1$ and $2$ of the exact
discrete system $\calM_{d,h}^E$ provided by
Example~\ref{ex:exact_discretizations_are_discretizations}. More
precisely, defining
\begin{equation}\label{eq:H_d^E_and_phi_d^E-order_1}
  H_{d,h}(q_0,p_1) = p_1q_0 + h H(q_0,p_1) \stext{ and }
  \phi_{d,h}(q_0,p_1) = h \check{\phi}(q_0,p_1)
\end{equation}
we have a discretization of $\calM$ that is accurate to order $1$,
whereas defining $H_{d,h}$ and $\phi_{d,h}$
using~\eqref{eq:H_d^E-order_2} and~\eqref{eq:phi_d^E-order_2} we have
a discretization of $\calM$ that is accurate to order $2$.

Next we describe an alternative, using the shooting method,
following~\cite{ar:leok_shingel-general_techniques_for_constructing_variational_integrators}
and~\cite{ar:schmitt_shingel_leok-lagrangian_and_hamiltonian_taylor_variational_integrators}.
Both $H_{d,h}^E$ and $\phi_{d,h}^E$ are defined
in~\eqref{eq:exact_discrete_hamiltonian_system-II-hamiltonian}
and~\eqref{eq:exact_discrete_hamiltonian_system-II-force} as integrals
of certain functions that involve the exact trajectories of
$\calM$. Thus, one can use a quadrature formula to approximate the
integral and a numerical integrator for the (initial value problem)
that approximates the exact trajectory of $\calM$. More precisely, fix
a Gaussian quadrature formula such that, for $f:[0,h]\rightarrow \R$,
it is accurate to order $a$, that is,
\begin{equation}\label{eq:shooting-quadrature_precission}
  \sum_{j=0}^n b_j f(c_jh) = \int_0^h f(x) dx +\calO(h^{a+1}).
\end{equation}
We also fix a numerical integrator $\Phi_h:T^*Q\rightarrow T^*Q$ for
Hamilton's equations~\eqref{eq:Hamilton_equations} that is accurate to
order $b$, that is,
\begin{equation}\label{eq:shooting-integrator_precission}
  \Phi_h(q,p) = \FlowC{\calM}{h}(q,p) + \calO(h^{b+1}).
\end{equation}

Ideally, we would proceed as follows: in order to define
$H_{d,h}(q_0,p_1)$ we would find $p_0\in Q^*$ such that
$\pr_2(\FlowC{\calM}{h}(q_0,p_0))=p_1$. Then, we would use
$\Phi_{c_j h}(q_0,p_0)$ to define a discrete trajectory (that
approximates $\FlowC{\calM}{c_j h}(q_0,p_0)$) and, last, use the
quadrature formula
and~\eqref{eq:exact_discrete_hamiltonian_system-II-hamiltonian} to
define $H_{d,h}(q_0,p_1)$. Unfortunately, we don't know
$\FlowC{\calM}{h}(q_0,p_0)$, so that we replace $p_0$ by
$\ti{p_0}\in Q^*$ such that $\pr_2(\Phi_h(q_0,\ti{p_0})) = p_1$. It can be
seen that $\ti{p_0} = p_0 +\calO(h^{b+1})$ (this is analogous to Lemma
3.1
in~\cite{ar:schmitt_shingel_leok-lagrangian_and_hamiltonian_taylor_variational_integrators}). Then, we define
\begin{equation*}
  (q^j,p^j)\CE \Phi_{c_j h}(q_0,\ti{p_0}) \stext{ and } v^j \CE D_2H(q^j,p^j)
  \stext{ for } j=0,\ldots,n.
\end{equation*}
Notice that, for $(q(t),p(t)) \CE \FlowC{\calM}{t}(q_0,p_0)$,
\begin{equation*}
  \begin{split}
    (q(c_j h),p(c_j h)) =& \FlowC{\calM}{t}(q_0,p_0) = \FlowC{\calM}{t}(q_0,\ti{p_0} + \calO(h^{b+1})) =  \FlowC{\calM}{t}(q_0,\ti{p_0}) + \calO(h^{b+1}) \\=& \Phi_{c_j h}(q_0,\ti{p_0}) +\calO(h^{b+1}) = (q^j,p^j) +\calO(h^{b+1}),
  \end{split}
\end{equation*}
where the third equality follows by a first order Taylor expansion of
the flow and the fourth is due to the accuracy of the numerical
integrator. On the other hand, as $(q(t),p(t))$
solves~\eqref{eq:Hamilton_equations}, we have
\begin{equation*}
  \begin{split}
    \dot{q}(c_j h) =& D_2H(q(c_j h),p(c_j h)) = D_2H((q^j,p^j)+\calO(h^{b+1})) \\=& D_2H(q^j,p^j) + \calO(h^{b+1}) = v^j +\calO(h^{b+1}).
  \end{split}
\end{equation*}
Last, we define
\begin{equation}\label{eq:shooting_H_phi-def}
  \begin{split}
    \begin{gathered}[t]
      H_{d,h}(q_0,p_1) \CE p^nq^n-\sum_{j=0}^n b_j(p^jv^j-H(q^j,p^j)),\\
      \phi_{d,h}(q_0,p_1)(\delta q_0,p_1) \CE \sum_{j=1}^n b_j\check{\phi}(q^j,p^j)(T_{(q_0,p_1)}q^j(\delta q_0,\delta p_1)).
    \end{gathered}
  \end{split}
\end{equation}
Notice that, by construction, all $(q^j,p^j)$ are functions of $q_0$
and $p_1$ and, as long as $\Phi_h$ is $C^1$, the derivative
$T_{(q_0,p_1)}q^j$ that appears in $\phi_{d,h}$ is well defined. In
addition, it follows from $q^j = q(c_j h) +\calO(h^{b+1})$ that
$T_{(q_0,p_1)}q^j = T_{(q_0,p_1)}q(c_j h) + \calO(h^{b+1})$.

\begin{proposition}\label{prop:shooting_H_phi_precission}
  For the maps $H_{d,h}$ and $\phi_{d,h}$ constructed
  using~\eqref{eq:shooting_H_phi-def} for a quadrature rule
  satisfying~\eqref{eq:shooting-quadrature_precission} and a numerical
  integrator $\Phi_h$
  satisfying~\eqref{eq:shooting-integrator_precission}, we have
  \begin{equation*}
    H_{d,h} = H_{d,h}^E +\calO(h^{\min\{a,b\}+1}) \stext{ and }
    \phi_{d,h} = \phi_{d,h}^E +\calO(h^{\min\{a,b\}+1}).
  \end{equation*}
\end{proposition}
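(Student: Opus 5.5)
We compare the defining expressions term by term, working throughout with the contact-order notion of Definition~\ref{def:contact_order_global}. Every quantity is viewed as a smooth function of $(q_0,p_1,h)$ near $h=0$.

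\textbf{Step 1: the trajectory.} Let $(q(t),p(t))\CE\FlowCBV{\calM}{h}{t}(q_0,p_1)$, so that $p_0=p(0)$ and $q(t)=\pr_1\FlowC{\calM}{t}(q_0,p_0)$. First I would justify $\ti{p_0}=p_0+\calO(h^{b+1})$. The map $p\mapsto\pr_2\Phi_h(q_0,p)$ and the map $p\mapsto\pr_2\FlowC{\calM}{h}(q_0,p)$ are both the identity at $h=0$. By \eqref{eq:shooting-integrator_precission} they differ by $\calO(h^{b+1})$. Proposition~\ref{prop:order_of_inverses-R_vector_space-adapted_h=0}, applied with $q_0$ as a parameter, then gives that their inverses, evaluated at $p_1$, have the same contact order.

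Next, the text preceding the statement gives $(q^j,p^j)=(q(c_jh),p(c_jh))+\calO(h^{b+1})$ and $v^j=\dot q(c_jh)+\calO(h^{b+1})$. Composing with smooth functions preserves this (Proposition~\ref{prop:order_of_compositions-R_vector_space-adapted}). Differentiating in $(q_0,p_1)$ preserves it as well (Lemma~\ref{le:order_of_derivative}), which yields $T_{(q_0,p_1)}q^j=T_{(q_0,p_1)}q(c_jh)+\calO(h^{b+1})$.

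\textbf{Step 2: the Hamiltonian.} Write $H_{d,h}^E=p(h)q(h)-\int_0^h f(t)\,dt$ with $f(t)\CE p(t)\dot q(t)-H(q(t),p(t))$. I would split
\begin{equation*}
H_{d,h}-H_{d,h}^E=\big(p^nq^n-p(h)q(h)\big)-\sum_j b_j\big(g^j-f(c_jh)\big)-\Big(\sum_j b_jf(c_jh)-\int_0^hf\Big),
\end{equation*}
where $g^j\CE p^jv^j-H(q^j,p^j)$.

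For the first term I would assume, as for Gauss--Lobatto type nodes, that $c_n=1$. Then $p^n=p_1=p(h)$ up to $\calO(h^{b+1})$, and the term is $\calO(h^{b+1})$. The second term is $\calO(h^{b+1})$ by Step 1, and the weights $b_j=\calO(h)$ only help. The third term is $\calO(h^{a+1})$ by \eqref{eq:shooting-quadrature_precission}, applied to the smooth integrand $f$. For this I must check that the quadrature remainder $h^{a+1}\delta$ is uniform and smooth in the parameters $(q_0,p_1)$, which holds because it is given by a Peano kernel applied to derivatives of $f$. Combining the three terms gives $\calO(h^{\min\{a,b\}+1})$.

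\textbf{Step 3: the force.} The same splitting applies with integrand $t\mapsto\check\phi(q(t),p(t))\big(T_{(q_0,p_1)}q(t)(\delta q_0,\delta p_1)\big)$. Since $\phi$ is horizontal, only the $q$-component of $T\FlowCBV{\calM}{h}{t}$ enters \eqref{eq:exact_discrete_hamiltonian_system-II-force}. Step 1 controls the replacement of $q(c_jh)$, $p(c_jh)$ and $T_{(q_0,p_1)}q(c_jh)$ by their discrete counterparts, and the quadrature estimate handles the rest. This term is linear in $(\delta q_0,\delta p_1)$, so the contact order is understood on $T(T^*Q)$, as in Lemma~\ref{le:order_of_derivative}.

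\textbf{Main obstacle.} I expect the main difficulty to be the rigorous passage from pointwise $\calO$ statements to contact order in the sense of Definition~\ref{def:contact_order_global}. This requires a continuous remainder on a neighborhood of $h=0$, in particular for the shooting inverse $\ti{p_0}$ and for its derivative. I would handle it uniformly through Proposition~\ref{prop:contact_order_and_derivatives}: show that all $h$-derivatives up to order $\min\{a,b\}$ at $h=0$ agree, using the smoothness in $h$ from Proposition~\ref{prop:existence_of_exact_discrete_hamiltonian_system-local}.
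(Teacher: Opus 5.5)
Your proposal is correct and follows the same route as the paper. Both replace the integral by the quadrature sum (error $\calO(h^{a+1})$), then replace the exact nodal values $(q(c_jh),p(c_jh))$, $\dot q(c_jh)$ and $T_{(q_0,p_1)}q(c_jh)$ by their integrator counterparts (error $\calO(h^{b+1})$), and combine the two. The extra details you supply fill gaps the paper's short computation leaves implicit: deriving $\ti{p_0}=p_0+\calO(h^{b+1})$ from Proposition~\ref{prop:order_of_inverses-R_vector_space-adapted_h=0}, stating explicitly the hypothesis $c_n=1$ that the paper uses tacitly when it replaces $p(h)q(h)$ by $p^nq^n$ (the estimate for $H_{d,h}$ fails without it), and checking that the quadrature remainder is uniform.
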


\begin{proof}
  We have
  \begin{equation*}
    \begin{split}
      H_{d,h}^E&(q_0,p_1) = p(h) q(h) - \int_0^h (p(t) \dot{q}(t) -H(q(t),p(t))) dt \\=& p(h) q(h) - \left( \sum_{j=0}^n b_j \left( p(c_j h) \dot{q}(c_j h) - H(q(c_j h), p(c_j h))\right)\right) + \calO(h^{a+1})  \\=& (p^n+\calO(h^{b+1})) (q^n+\calO(h^{b+1})) - \bigg(\sum_{j=0}^n b_j ((p^j+\calO(h^{b+1})) (v^j+\calO(h^{b+1})) \\&\phantom{(p^n+\calO(h^{b+1})) (q^n+\calO(h^{b+1})) - \bigg(} - \underbrace{H((q^j,p^j)+\calO(h^{b+1}))}_{=H(q^j,p^j) +\calO(h^{b+1})})\bigg)  +\calO(h^{a+1}) \\=& p^n q^n- \left(\sum_{j=0}^n b_j (p^j v^j - H(q^j,p^j))\right) + \calO(h^{a+1}) + \calO(h^{b+1}) \\=& H_{d,h}(q_0,p_1) +\calO(h^{\min\{a,b\}+1}).
    \end{split}
  \end{equation*}
  A similar computation shows the result for $\phi_{d,h}$.
\end{proof}

The following result, whose proof follows by unraveling the
definitions, shows that the order $1$ discretization considered at the
beginning of the section has some interesting properties in connection
with the commutativity of the discretization and the passage from the
Hamiltonian to the Lagrangian formalisms.

\begin{proposition}\label{prop:discretization and 1st order approximation}
  Let $(Q,H,\phi)$ be a regular forced Hamiltonian system of
  mechanical type with potential depending only on $q$. Given a fixed
  time--step $h>0$, the FDHS $(Q,H_{d,h},\phi_{d,h})$ given
  by~\eqref{eq:H_d^E_and_phi_d^E-order_1}, is equivalent to the system
  obtained via the FDLS constructed using the discretization
  $\Delta_h : TQ \lra Q \times Q$,
  \begin{equation*}
    \Delta_h(q,v) \CE  \left( q , q + h v \right), \quad
    \Delta_h^{-1}(q_0,q_1) = \left( q_0 , \frac{q_1 - q_0}{h} \right).    
  \end{equation*}
  In other words, the following diagram commutes
  \begin{equation*}
    \xymatrix{
      (Q,H,\phi) \ar@{-->}[d] \ar[r]^-{\ff H} &
      (Q,L,f) \ar@{-->}[d]^-{\Delta_h} \\
      (Q,H_{d,h},\phi_{d,h}) & (Q,L_d,f_d) \ar[l]_-{\widetilde{\ff}_{f_d} L_d}
    }
  \end{equation*}
  where the arrows represent the different constructions using the
  mentioned maps and the dashed ones indicate that it results in a
  discrete system.
\end{proposition}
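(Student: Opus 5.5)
We trace both paths of the diagram explicitly and compare the resulting pair $(H_d,\phi_d)$. Work with $H(q,p)=\frac{\norm{p}^2}{2m}+V(q)$ and $\phi=a(q,p)\,dq$, as in Example~\ref{ex:simple_with_kappa_and_nu}.

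\emph{Step 1 (Legendre transform).} Since $\ff H(q,p)=(q,p/m)$ is a diffeomorphism, the associated Lagrangian is $L(q,v)=\frac m2\norm{v}^2-V(q)$. The Lagrangian force is obtained by pulling back, $f=((\ff H)^{-1})^*\phi=a(q,mv)\,dq$; this is horizontal.

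\emph{Step 2 (discretization via $\Delta_h$).} Following the standard construction (Marsden--West), set $L_d=h\,L\circ\Delta_h^{-1}$, which gives
\begin{equation*}
  L_d(q_0,q_1)=\frac{m}{2h}\norm{q_1-q_0}^2-hV(q_0).
\end{equation*}
For the discrete force we take $f_d(q_0,q_1)=h\,a\big(q_0,m\tfrac{q_1-q_0}{h}\big)\,dq_0$. That is, $f_d^-$ is this term and $f_d^+=0$, mirroring the evaluation of $L$ at the left endpoint. The precise convention for $f_d$ is the point to fix carefully. I would choose it so that it is the one for which the statement holds, which is presumably what the omitted definitions in the paper intend.

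\emph{Step 3 (the FDHS from the FDLS).} The $+$ transform is
\begin{equation*}
  \conj{\ff^+_{f_d}L_d}(q_0,q_1)=D_2L_d=\frac mh(q_1-q_0).
\end{equation*}
Hence $\widetilde{\ff}_{f_d}L_d(q,q')=(q,\tfrac mh(q'-q))$ is a global diffeomorphism, so the MHC holds, with $q^+(q,p)=q+\frac hm p$.

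We then apply \eqref{eq:H_d_from_L_d}:
\begin{equation*}
  H_d(q,p)=p\big(q+\tfrac hm p\big)-\frac{m}{2h}\norm{\tfrac hm p}^2+hV(q)=pq+h\Big(\frac{\norm p^2}{2m}+V(q)\Big)=pq+hH(q,p).
\end{equation*}
For the force we use \eqref{eq:phi_d_from_f_d-explicit}. Because $f_d^+=0$, we get $\phi_d^p=0$ and $\phi_d^q(q,p)=f_d^-(q,q^+(q,p))=h\,a(q,p)\,dq$. Thus $\phi_d=h\check\phi$, which agrees with \eqref{eq:H_d^E_and_phi_d^E-order_1}.

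\emph{Main obstacle.} The main difficulty is that the paper leaves the passage $(L,f)\mapsto(L_d,f_d)$ through $\Delta_h$ implicit, so the force convention has to be pinned down. A symmetric or right-endpoint choice would produce a nonzero $f_d^+$ and therefore a nonzero $\phi_d^p$, and the identity would fail.

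I would try the natural definition $f_d=h\,(\Delta_h^{-1})^*$ applied to the horizontal part, with the force acting on $\delta q_0$ only. I would also check that the regularity hypothesis is used only for $\ff H$ to be invertible. The potential depending only on $q$ is what makes $L_d$'s potential term a function of $q_0$ alone, so that $D_2L_d$, and hence $q^+$, contains no $V$ contribution.
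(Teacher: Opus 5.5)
Your proposal is correct and follows the same route as the paper, whose proof is just unraveling the definitions: $L=\frac m2\norm{v}^2-V(q)$, $L_d=hL\circ\Delta_h^{-1}$, $q^+(q,p)=q+\frac hm p$, which gives $H_d=pq+hH$ and $\phi_d=h\check\phi$. You do not need to choose the force convention so that the statement holds, because the natural definition $f_d=h(\Delta_h^{-1})^*f$ already gives your $f_d$: since $f$ is horizontal and the base component of $T\Delta_h^{-1}(\delta q_0,\delta q_1)$ is $\delta q_0$, it automatically yields $f_d^+=0$ and $f_d^-=h\,a\big(q_0,\tfrac mh(q_1-q_0)\big)\,dq_0$.
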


\begin{example}\label{ex:sextic_potential}
  Let us revisit
  Example~\ref{example:unit_mass_particle_with_radial_potential},
  where we considered the forced Hamiltonian system
  $\calM\CE(Q,H,\phi)$ given by $Q \CE \R^2$,
  \begin{equation*}
    H(q,p) \CE  \frac{\norm{p}^2}{2} + \norm{q}^2 \left( \norm{q}^2 - 1 \right)^2 \stext{ and } \phi(q,p) \CE  -\mu \left( p^x \ dq^x + p^y \ dq^y \right),
  \end{equation*}
  where $\mu\geq 0$ is a constant and we are using the notation
  $q = (q^x,q^y)$, $p = (p^x,p^y)$.

  Using the order $1$ discretization of $\calM$ given
  by~\eqref{eq:H_d^E_and_phi_d^E-order_1}, we obtain the FDHS
  $\calM_{d,h}\CE(Q,H_{d,h},\phi_{d,h})$ with
  \begin{gather*}
    H_{d,h}(q,p) = p q + h H(q,p) =
    \langle p,q \rangle + \frac{h}{2} \norm{p}^2 +
    h \norm{q}^2 \left( \norm{q}^2 - 1 \right)^2,\\
    \phi_{d,h}(q,p) = h \check{\phi}(q,p) =
    -\mu h \left( p^x \ dq^x + p^y \ dq^y \right).
  \end{gather*}

  Using the equations~\eqref{eq:extended_discrete_Hamilton_equations},
  we find that the flow of $\calM_{d,h}$ is
  \begin{align*}
      q^x_{k+1}& = q^x_k + \frac{h}{1 + h\mu} \left( p^x_k - h q^x_k K(q_k) \right),&  p^x_{k+1}& = \frac{1}{1 + h\mu} \left( p^x_k - h q^x_k K(q_k) \right),\\
      q^y_{k+1}& = q^y_k + \frac{h}{1 + h\mu} \left( p^y_k - h q^y_k K(q_k) \right),& p^y_{k+1}& = \frac{1}{1 + h\mu} \left( p^y_k - 2 h q^y_k K(q_k) \right),\\
  \end{align*}
  where
  $K(q)\CE 2\left((\norm{q}^2 -1)^2 + 2 \norm{q}^2 (\norm{q}^2 -
    1)\right)$.

  This FDHS gives rise to an order $1$ numerical integrator that we
  call \texttt{FDHS-O1}. A similar construction starting with $\calM$
  and using~\eqref{eq:H_d^E-order_2} and~\eqref{eq:phi_d^E-order_2}
  leads to an order $2$ numerical integrator that we call
  \texttt{FDHS-O2}. We compare these integrators with the classical
  order $4$ Runge--Kutta denoted by \texttt{RK4} (applied to the
  equations~\eqref{eq:Hamilton_equations} corresponding to
  $\calM$). As a benchmark we use the solution provided by
  \texttt{NDSolve} in \text{Mathematica 12}.

  Following Example 3.14 in
  \cite{ar:caruso_fernandez_tori_zuccalli:2023:lagrangian_reduction_of_forced_discrete_mechanical_systems},
  we consider $\mu \CE 10^{-3}$ and initial conditions
  $q_0 \CE (0.1,1.1)$, $p_0 \CE (0.6,0.1)$ to plot the trajectories
  and the energy evolution of the system. In addition, we used a
  timestep of $h=0.2$ and a maximum time of
  $4000$. Figure~\ref{fig:Q_exact-q} shows the (benchmark) evolution
  of the system in $Q$. Figure~\ref{fig:Q_exact-q_error} compares the
  local error for the \texttt{FDHS-O1}, \texttt{FDHS-O2} and the
  \texttt{RK4} integrators: notice that all show approximately the
  same order of magnitude in the error. Still, the error for
  \texttt{FDHS-O2} is about one third of the error of \texttt{RK4}
  even though one is order $2$ and the other is order $4$ ---this is
  possibly due to the fact that $h$ is fairly large, which is
  convenient for the applications.
  \begin{figure}[H]
    \centering \subfigure[Benchmark position
    evolution\label{fig:Q_exact-q}]{\includegraphics[scale=1]{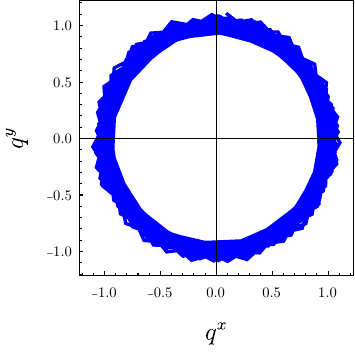}}
    \goodgap \subfigure[Error in position
    evolution\label{fig:Q_exact-q_error}]{\includegraphics[scale=1]{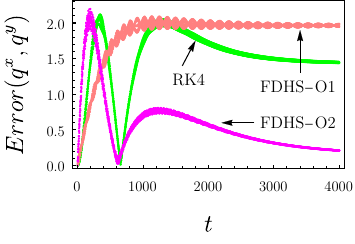}}
    \caption{Position evolution (Benchmark and numerical). Parameters:
      $\mu=10^{-3}$ and $h=0.2$. Initial conditions: $q_0=(0.1,1.1)$,
      $p_0=(0.6,0.1)$. Methods: Benchmark (Blue), \texttt{FDHS-O1}
      (Pink), \texttt{FDHS-O2} (Magenta) and \texttt{RK4} (Green).}
    \label{fig:Q_exact}
  \end{figure}

  As this is a forced system, with a friction-type force, it is
  interesting to compare the energy evolution under the different
  numerical integrators. Figure~\ref{fig:H_evolution-exact} shows the
  (benchmark) evolution of the energy of the
  system. Figure~\ref{fig:H_evolution-numerical} compares the errors
  of the energy estimates for \texttt{FDHS-O1}, \texttt{FDHS-O2} and
  \texttt{RK4}. Notice that, qualitatively speaking, all three
  integrators follow the benchmark, that is, the error is decreasing;
  the graph also shows the typical ``oscillatory'' behavior of the
  error for the variational integrators. Last, observe how, even the
  order $1$ \texttt{FDHS-O1} outperforms \texttt{RK4} in this case.

    \begin{figure}[H]
      \centering \subfigure[Benchmark energy
      evolution\label{fig:H_evolution-exact}]{\includegraphics[scale=1]{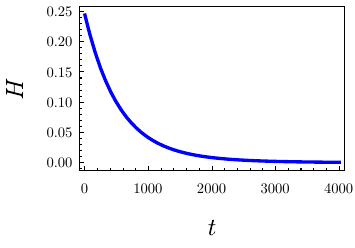}}
      \goodgap \subfigure[Error in energy
      evolution\label{fig:H_evolution-numerical}]{\includegraphics[scale=1]{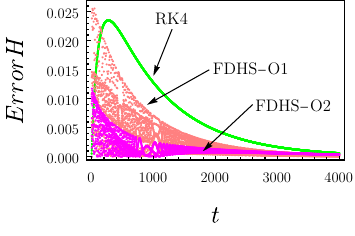}}
    \caption{Energy evolution (Benchmark and numerical). Parameters:
      $\mu=10^{-3}$ and $h=0.2$. Initial conditions: $q_0=(0.1,1.1)$,
      $p_0=(0.6,0.1)$. Methods: Benchmark (Blue), \texttt{FDHS-O1} (Pink),
      \texttt{FDHS-O2} (Magenta) and \texttt{RK4} (Green).}
    \label{fig:H_evolution}
  \end{figure}

\end{example}


\section*{Acknowledgments}

This document is the result of research partially supported by grants
from the Universidad Nacional de Cuyo [code 06/80020240100069UN],
Universidad Nacional de La Plata [code SX007], and CONICET.


\printbibliography

@article{ar:caruso_fernandez_tori_zuccalli:2023:lagrangian_reduction_of_forced_discrete_mechanical_systems,
author = {Matías I {Caruso} and Javier {Fernández} and Cora {Tori} and Marcela {Zuccalli}},
title = {Lagrangian reduction of forced discrete mechanical systems},
journal = {Journal of Physics A: Mathematical and Theoretical},
doi = {10.1088/1751-8121/aceae3},
year = {2023},
publisher = {IOP Publishing},
volume = {56},
number = {35},
pages = {355202},
}

@article{ar:cuell_patrick:2007:skew_critical_problems,
    AUTHOR = {Charles {Cuell} and George. W. {Patrick}},
     TITLE = {Skew critical problems},
   JOURNAL = {Regul. Chaotic Dyn.},
  FJOURNAL = {Regular and Chaotic Dynamics. International Scientific
              Journal},
    VOLUME = {12},
      YEAR = {2007},
    NUMBER = {6},
     PAGES = {589--601},
      ISSN = {1560-3547},
   MRCLASS = {37J60 (70F25 70H30 90C30)},
  MRNUMBER = {2373159 (2010g:37100)},
       DOI = {10/dvjwd3},
       URL = {https://doi.org/dvjwd3},
}

@article{ar:fernandez_graiffZurita_grillo:2021:error_analysis_of_forced_discrete_mechanical_systems,
  author = 	 {Javier {Fern\'andez} and Sebasti\'an {Graiff Zurita} and
                  Sergio {Grillo}},
  title = 	 {Error analysis of forced discrete mechanical systems},
  journal = 	 {J.Geom. Mech.},
  year = 	 {2021},
  OPTkey = 	 {},
  volume = 	 {13},
  OPTnumber = 	 {},
  OPTpages = 	 {533-606},
  OPTmonth = 	 {},
  note = 	 {Preprint available:
                  \href{http://arXiv.org/abs/2103.11060}{\tt
                  arXiv:2103.11060}},
  OPTannote = 	 {},
  DOI = {10.3934/jgm.2021017},
  URL = {https://dx.doi.org/10.3934/jgm.2021017}
}

@article {ar:leok_shingel-general_techniques_for_constructing_variational_integrators,
    AUTHOR = {Leok, Melvin and Shingel, Tatiana},
     TITLE = {General techniques for constructing variational integrators},
   JOURNAL = {Front. Math. China},
  FJOURNAL = {Frontiers of Mathematics in China},
    VOLUME = {7},
      YEAR = {2012},
    NUMBER = {2},
     PAGES = {273--303},
      ISSN = {1673-3452},
   MRCLASS = {65P10 (70H25)},
  MRNUMBER = {2897705},
MRREVIEWER = {Fernando Casas},
       DOI = {10.1007/s11464-012-0190-9},
       URL = {http://dx.doi.org/10.1007/s11464-012-0190-9},
}

@article{ar:leok_zhang:2011:discrete_hamiltonian_variational_integrators,
    AUTHOR = {Melvin {Leok} and Jingjing {Zhang}},
     TITLE = {Discrete {H}amiltonian variational integrators},
   JOURNAL = {IMA J. Numer. Anal.},
  FJOURNAL = {IMA Journal of Numerical Analysis},
    VOLUME = {31},
      YEAR = {2011},
    NUMBER = {4},
     PAGES = {1497--1532},
      ISSN = {0272-4979},
     CODEN = {IJNADN},
   MRCLASS = {65P10 (37M15)},
  MRNUMBER = {2846764},
MRREVIEWER = {Fernando Casas},
       DOI = {10.1093/imanum/drq027},
       URL = {http://dx.doi.org/10.1093/imanum/drq027},
}

@article {ar:marsden_west:2001:discrete_mechanics_and_variational_integrators,
    AUTHOR = {Jerrold E. {Marsden} and M. {West}},
     TITLE = {Discrete mechanics and variational integrators},
   JOURNAL = {Acta Numer.},
  FJOURNAL = {Acta Numerica},
    VOLUME = {10},
      YEAR = {2001},
     PAGES = {357--514},
      ISSN = {0962-4929},
       DOI = {10.1017/S096249290100006X},
   MRCLASS = {37M15 (37J05 65P10 70-08 70H05)},
  MRNUMBER = {MR2009697 (2004h:37130)},
MRREVIEWER = {Christian Lubich},
}

@article{ar:schmitt_leok:2017:properties_of_hamiltonian_variational_integrators,
	author = {Jeremy M {Schmitt} and Melvin {Leok}},
	title = "{Properties of Hamiltonian variational integrators}",
	journal = {IMA Journal of Numerical Analysis},
	volume = {38},
	number = {1},
	pages = {377-398},
	year = {2017},
	issn = {0272-4979},
	doi = {10.1093/imanum/drx010},
	url = {https://doi.org/10.1093/imanum/drx010},
}

@book {bo:goldstein:classical_mechanics,
    AUTHOR = {Herbert {Goldstein}},
     TITLE = {Classical mechanics},
   EDITION = {Second Edition},
      NOTE = {Addison-Wesley Series in Physics},
 PUBLISHER = {Addison-Wesley Publishing Co., Reading, Mass.},
      YEAR = 1980,
     PAGES = {xiv+672},
      ISBN = {0-201-02918-9},
   MRCLASS = {70-02},
  MRNUMBER = {575343 (81j:70001)},
MRREVIEWER = {Ll. G. Chambers},
}

@book {bo:teschl-ordinary_differential_equations_and_dynamical_systems,
    AUTHOR = {Teschl, Gerald},
     TITLE = {Ordinary differential equations and dynamical systems},
    SERIES = {Graduate Studies in Mathematics},
    VOLUME = {140},
 PUBLISHER = {American Mathematical Society},
   ADDRESS = {Providence, RI},
      YEAR = {2012},
     PAGES = {xii+356},
      ISBN = {978-0-8218-8328-0},
  language = {english},
   MRCLASS = {34-01 (37-01 39-01)},
  MRNUMBER = {2961944},
MRREVIEWER = {Eleonora Catsigeras},
}

@book {bo:keller-numerical_methods_for_two_point_boundary_value_problems,
    AUTHOR = {Keller, Herbert B.},
     TITLE = {Numerical methods for two-point boundary value problems},
      NOTE = {Corrected reprint of the 1968 edition},
 PUBLISHER = {Dover Publications, Inc., New York},
      YEAR = {1992},
     PAGES = {x+397},
      ISBN = {0-486-66925-4},
  language = {english},
   MRCLASS = {65L10 (34A50 34B15)},
  MRNUMBER = {1207811 (93k:65066)},
}

@book {bo:Guillemin-Pollack-differential_topology,
    AUTHOR = {Guillemin, Victor and Pollack, Alan},
     TITLE = {Differential topology},
 PUBLISHER = {Prentice-Hall Inc.},
   ADDRESS = {Englewood Cliffs, N.J.},
      YEAR = {1974},
     PAGES = {xvi+222},
  language = {english},
      ISBN = {0-13-212605-2},
   MRCLASS = {58-01 (57-01)},
  MRNUMBER = {MR0348781 (50 \#1276)},
MRREVIEWER = {K. H. Mayer},
 zbMATH = {3562121},
 Zbl = {0361.57001},
}

@book {bo:lang-differential_manifolds,
    AUTHOR = {Lang, Serge},
     TITLE = {Differential manifolds},
 PUBLISHER = {Addison-Wesley Publishing Co., Inc., Reading, Mass.-London-Don
              Mills, Ont.},
      YEAR = {1972},
     PAGES = {ix+230},
  language = {english},
   MRCLASS = {58-01},
  MRNUMBER = {0431240 (55 \#4241)},
MRREVIEWER = {N. J. Hicks},
}

@article{ar:schmitt_shingel_leok-lagrangian_and_hamiltonian_taylor_variational_integrators,
 author = {Schmitt, Jeremy and Shingel, Tatiana and Leok, Melvin},
 title = {Lagrangian and {Hamiltonian} {Taylor} variational integrators},
 fjournal = {BIT},
 journal = {BIT},
 issn = {0006-3835},
 volume = {58},
 number = {2},
 pages = {457--488},
 year = {2018},
 language = {english},
 doi = {10.1007/s10543-017-0690-9},
 zbMATH = {6894732},
 Zbl = {1392.37089}
}

@book {bo:hairer_lubich_wanner-geometric_numerical_integration,
    AUTHOR = {Hairer, Ernst and Lubich, Christian and Wanner, Gerhard},
     TITLE = {Geometric numerical integration},
    SERIES = {Springer Series in Computational Mathematics},
    VOLUME = {31},
   EDITION = {Second},
      NOTE = {Structure-preserving algorithms for ordinary differential
              equations},
 PUBLISHER = {Springer-Verlag},
   ADDRESS = {Berlin},
      YEAR = {2006},
     PAGES = {xviii+644},
      ISBN = {978-3-540-30663-4},
   MRCLASS = {65-02 (37M15 65Lxx 65P10 70-08)},
  MRNUMBER = {MR2221614 (2006m:65006)},
}

@book {bo:AM-mechanics,
    AUTHOR = {Abraham, Ralph and Marsden, Jerrold E.},
     TITLE = {Foundations of mechanics},
      NOTE = {Second edition, revised and enlarged,
              With the assistance of Tudor Ra{\c{t}}iu and Richard Cushman},
 PUBLISHER = {Benjamin/Cummings Publishing Co. Inc. Advanced Book Program},
   ADDRESS = {Reading, Mass.},
      YEAR = {1978},
     PAGES = {xxii+m-xvi+806},
      ISBN = {0-8053-0102-X},
   MRCLASS = {58Fxx (70-02 70F07)},
  MRNUMBER = {MR515141 (81e:58025)},
MRREVIEWER = {D. L. Rod},
}

@article {ar:lall_west-discrete_variational_hamiltonian_mechanics,
    AUTHOR = {Lall, S. and West, M.},
     TITLE = {Discrete variational {H}amiltonian mechanics},
   JOURNAL = {J. Phys. A},
  FJOURNAL = {Journal of Physics. A. Mathematical and General},
    VOLUME = {39},
      YEAR = {2006},
    NUMBER = {19},
     PAGES = {5509--5519},
      ISSN = {0305-4470},
     CODEN = {JPHAC5},
   MRCLASS = {37J05 (34A26 39A10 49S05 70H20)},
  MRNUMBER = {2220773 (2007h:37080)},
MRREVIEWER = {Juan Carlos Marrero Gonzalez},
       DOI = {10.1088/0305-4470/39/19/S11},
       URL = {http://dx.doi.org/10.1088/0305-4470/39/19/S11},
}

@article {ar:deDiego_deAlmagro-variational_order_for_forced_lagrangian_systems,
    AUTHOR = {Mart\'{\i}n de Diego, D. and Mart\'{\i}n de Almagro, R. T. Sato},
     TITLE = {Variational order for forced {L}agrangian systems},
   JOURNAL = {Nonlinearity},
  FJOURNAL = {Nonlinearity},
    VOLUME = {31},
      YEAR = {2018},
    NUMBER = {8},
     PAGES = {3814--3846},
      ISSN = {0951-7715},
   MRCLASS = {37M15 (65P10 70G75 70H03 70H05)},
  MRNUMBER = {3826116},
MRREVIEWER = {Ram Krishan Sharma},
       DOI = {10.1088/1361-6544/aac5a6},
       URL = {https://doi.org/10.1088/1361-6544/aac5a6},
}

@article {ar:deLeon_lainz_lopezGordon-discrete_hamilton_jacobi_theory_for_systems_with_external_forces,
    AUTHOR = {de Le\'{o}n, Manuel and Lainz, Manuel and L\'{o}pez-Gord\'{o}n, Asier},
     TITLE = {Discrete {H}amilton-{J}acobi theory for systems with external
              forces},
   JOURNAL = {J. Phys. A},
  FJOURNAL = {Journal of Physics. A. Mathematical and Theoretical},
    VOLUME = {55},
      YEAR = {2022},
    NUMBER = {20},
     PAGES = {Paper No. 205201, 30},
      ISSN = {1751-8113},
   MRCLASS = {70S05},
  MRNUMBER = {4418632},
       DOI = {10.1088/1751-8121/ac6240},
       URL = {https://doi.org/10.1088/1751-8121/ac6240},
}

@article {ar:marrero_martin_martinez-discrete_lagrangian_and_hamiltonian_mechanics_on_lie_groupoids,
    AUTHOR = {Marrero, Juan C. and Mart{\'{\i}}n de Diego, David and
              Mart{\'{\i}}nez, Eduardo},
     TITLE = {Discrete {L}agrangian and {H}amiltonian mechanics on {L}ie
              groupoids},
   JOURNAL = {Nonlinearity},
  FJOURNAL = {Nonlinearity},
    VOLUME = {19},
      YEAR = {2006},
    NUMBER = {6},
     PAGES = {1313--1348},
      ISSN = {0951-7715},
     CODEN = {NONLE5},
   MRCLASS = {37J05 (17B66 22A22 53D20 70G45 70H30)},
  MRNUMBER = {2230001 (2007c:37068)},
MRREVIEWER = {Pawe{\l}Urbanski},
       DOI = {10.1088/0951-7715/19/6/006},
       URL = {http://dx.doi.org/10.1088/0951-7715/19/6/006},
}

@article{ar:wendtlandt_marsden-mechanical_integrators_derived_from_a_discrete_variational_principle,
 author = {Wendlandt, Jeffrey M. and Marsden, Jerrold E.},
 title = {Mechanical integrators derived from a discrete variational principle},
 fjournal = {Physica D},
 journal = {Physica D},
 issn = {0167-2789},
 volume = 106,
 number = {3-4},
 pages = {223--246},
 year = 1997,
 language = {english},
 doi = {10.1016/S0167-2789(97)00051-1},
 zbMATH = 1392832,
 Zbl = {0963.70507}
}

\end{document}